\pdfoutput=1
\documentclass[11pt]{amsart}
\usepackage[utf8]{inputenc}
\usepackage[margin=1in, headheight=14.5pt]{geometry}
\usepackage{parskip}

\usepackage{calligra}
\usepackage{mathrsfs}
\usepackage[mathscr]{euscript}
\usepackage{bbm}

\usepackage{enumitem}
\usepackage{hyperref}
\usepackage[nameinlink]{cleveref}
\usepackage{url} 
\usepackage[style=alphabetic,url=false,backref=true]{biblatex}

\usepackage{subcaption}
\usepackage{pgfplots}
\pgfplotsset{compat=1.18}
\usepackage{xcolor}
\usepackage{tikz-cd}

\newcommand{\mc}[1]{\mathcal{#1}}
\newcommand{\mf}[1]{\mathfrak{#1}}

\newcommand{\bb}[1]{\mathbb{#1}}
\newcommand{\mbbm}[1]{\mathbbm{#1}}
\newcommand{\olin}[1]{\overline{#1}}
\newcommand{\ulin}[1]{\underline{#1}}
\newcommand{\wt}[1]{\widetilde{#1}}

\newcommand{\Z}{\mathbb{Z}}
\newcommand{\N}{\mathbb{N}}
\newcommand{\R}{\mathbb{R}}

\newcommand{\inv}{^{-1}}

\newcommand{\vphi}{\varphi}

\let\hom\relax
\DeclareMathOperator{\hom}{Hom}

\DeclareMathOperator{\ini}{in}
\DeclareMathOperator{\LT}{in}

\newcommand{\ca}[1]{\mathscr{#1}}
\newcommand{\pstab}[1]{\wt{#1}}

\newcommand{\sysalg}[1]{\ca C^{#1}}
\newcommand{\cc}[2]{A_{#1}(#2)}

\DeclareMathOperator{\vol}{vol}
\DeclareMathOperator{\cmult}{vol}

\newtheorem{theorem}{Theorem}[section]
\newtheorem{thm}[theorem]{Theorem}
\newtheorem{lemma}[theorem]{Lemma}
\newtheorem{prop}[theorem]{Proposition}
\newtheorem{cor}[theorem]{Corollary}
\newtheorem{fact}[theorem]{Fact}
\newtheorem{conj}[theorem]{Conjecture}

\newtheorem*{thmA}{Theorem A}
\newtheorem*{thmB}{Theorem B}
\newtheorem*{thmC}{Theorem C}

\theoremstyle{definition}
\newtheorem{defn}[theorem]{Definition}

\newtheorem{rmk}[theorem]{Remark}
\newtheorem{zb}[theorem]{Example}
\newtheorem{notation}[theorem]{Notation}

\newtheorem*{ackblock}{Acknowledgements}

\title{Cartier algebras through the lens of $p$-families}
\author{Anna Brosowsky}
\thanks{The author was supported in part by NSF grants DMS \#1840234 and \#2101075, and by NSF postdoctoral
fellowship \#2402293.}
\address{Department of Mathematics, University of Nebraska--Lincoln, Lincoln, NE 68588, USA}
\email{abrosowsky2@unl.edu}
\date{} 

\subjclass{13A35, 14B05}

\begin{document}
\begin{abstract}
We study $F$-graded systems of ideals in $R$, which are sequences of ideals giving rise to Cartier algebras on $R$. We identify how properties of these systems (or modifications of these systems) affect the singularity properties of the corresponding Cartier algebra.
In particular, we show that in a 
Gorenstein and strongly $F$-regular 
local ring, strong $F$-regularity and $F$-splitting are the same for a special class of $F$-graded systems called $p$-families.
Further, we make use of this and a new operation we introduce called \emph{$p$-stabilization} to get a criterion that in a 
Gorenstein and strongly $F$-regular 
local ring, a system is strongly $F$-regular exactly when its $p$-stabilization is $F$-split.
Finally, we associate a combinatorial object to systems built out of monomial ideals and show how this can help compute the $p$-stabilization.
\end{abstract}

\maketitle

\section{Introduction}
In positive characteristic commutative algebra, we are often interested in classifying the singularities of a ring $R$ by leveraging the power of the Frobenius map.
Two common such singularity classes are $F$-splitting (where the Frobenius splits as an $R$-module map) and strong $F$-regularity (where the Frobenius and other related maps all split as $R$-module maps). 
To put these into context, $F$-splitting and strong $F$-regularity are positive characteristic analogs of the characteristic zero notions of log canonicity and Kawamata log terminality. 
Further, strongly $F$-regular rings are Cohen-Macaulay and normal. 
As these singularity classes are defined in terms of when certain maps split, it is natural to consider what happens when further restrictions are placed on the maps which provide the splittings. 
This led to the introduction of Cartier algebras by Schwede \cite{Schwede.11a}, see \Cref{def:cartier-subalgebra}.

Cartier algebras generalize the more traditional ``pairs'' setting, and can carry information about an associated divisor, or information about a quotient ring. They can also be used to better understand the original ring.
For example, Carvajal-Rojas and Smolkin showed that if a $k$-algebra of essentially finite type is diagonally $F$-regular (meaning a certain Cartier algebra is strongly $F$-regular), 
then there is an effective bound relating symbolic powers of prime ideals to typical powers of prime ideals, namely that $\mf p^{(n\cdot \dim R)} \subseteq \mf p^n$ \cite{Carvajal-Rojas+Smolkin.20}. 
Notably, $\dim R$ is the same bound as for a regular ring \cite{Ein+etal.01,Hochster+Huneke.02,Ma+Schwede.18}.

We study sequences of ideals called \emph{$F$-graded systems} (see \Cref{def:f-graded}), primarily working over local rings which are Gorenstein and strongly $F$-regular. 
These arise naturally when considering Cartier algebras, and in particular one can define a notion of strong $F$-regularity and Frobenius splitting for $F$-graded systems (see \Cref{def:f-singularities-f-graded}).
A special class of $F$-graded systems is that of \emph{$p$-families} (see \Cref{def:p-family}), named by Hern\'andez and Jeffries \cite{Hernandez+Jeffries.18}. This class is independently of interest as $p$-families appear when defining the Hilbert-Kunz multiplicity \cite{Monsky.83} and the $F$-signature \cite{Tucker.12}.
However, not much is known about the classes of Cartier algebras these $p$-families correspond to. 
As it turns out, for $p$-families, Frobenius splitting and strong $F$-regularity collapse into the same condition:

\begin{thmA}[\Cref{p-family-f-split-iff-sfr}]
Let $(R,\mf m)$ be a Gorenstein and strongly $F$-regular $F$-finite
local ring. Let $\mf b_\bullet$ be a $p$-family in $R$. Then $\mf b_\bullet$ is $F$-split if and only if it is strongly $F$-regular.
\end{thmA}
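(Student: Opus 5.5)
We prove the nontrivial direction by composing an $F$-splitting of $\mf b_\bullet$ with a splitting coming from the strong $F$-regularity of $R$ itself; the $p$-family containment $\mf b_e^{[p]}\subseteq \mf b_{e+1}$ is what makes the two splittings line up.

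\textbf{Setup.} Since $R$ is Gorenstein and local, for each $e$ the module $\hom_R(F^e_*R,R)$ is generated by a single map $\Phi^e$. These generators can be chosen compatibly, so that $\Phi^{e+e'}=\Phi^{e}\circ F^{e}_*\Phi^{e'}$. Under this identification, the Cartier algebra of an $F$-graded system $\mf a_\bullet$ has degree-$e$ piece $F^e_*\mf a_e\cdot\Phi^e$. Let
\[
I_e=\{x\in R : \Phi^e(F^e_*(xR))\subseteq \mf m\}
\]
be the $e$-th splitting ideal. I will use the following formulation of \Cref{def:f-singularities-f-graded} in the Gorenstein local case:
\begin{itemize}[label={}]
\item $\mf a_\bullet$ is $F$-split if and only if $\mf a_e\not\subseteq I_e$ for some $e>0$;
\item $\mf a_\bullet$ is strongly $F$-regular if and only if for every $c$ not in a minimal prime there is some $e$ with $c\,\mf a_e\not\subseteq I_e$.
\end{itemize}
If the paper's definition is phrased with Cartier maps rather than ideals, this translation is immediate from $\Phi^e$ generating $\hom_R(F^e_*R,R)$.

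\textbf{Easy direction.} Strongly $F$-regular implies $F$-split: take $c=1$.

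\textbf{Hard direction.} Assume $\mf b_\bullet$ is $F$-split, and fix $x\in\mf b_{e}$ with $x\notin I_{e}$. Then there is $z$ with $\Phi^{e}(F^{e}_*xz)$ a unit, and after rescaling $z$ we may take $\Phi^{e}(F^{e}_*xz)=1$.

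Now let $c$ be outside every minimal prime. Because $R$ is strongly $F$-regular, there is $e_1$ with $\Phi^{e_1}(F^{e_1}_*cR)=R$. Choose $y$ with $\Phi^{e_1}(F^{e_1}_*cy)=z$. Using the composition rule and the $p^{e_1}$-linearity $\Phi^{e_1}(F^{e_1}_* r^{p^{e_1}}s)=r\,\Phi^{e_1}(F^{e_1}_*s)$, we compute
\[
\Phi^{e+e_1}\bigl(F^{e+e_1}_* c\,x^{p^{e_1}}y\bigr)=\Phi^{e}\bigl(F^{e}_*\,x\,\Phi^{e_1}(F^{e_1}_*cy)\bigr)=\Phi^{e}(F^{e}_*xz)=1 .
\]
Hence $c\,x^{p^{e_1}}\notin I_{e+e_1}$.

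Iterating the $p$-family condition gives $\mf b_e^{[p^{e_1}]}\subseteq\mf b_{e+e_1}$, so $x^{p^{e_1}}\in\mf b_{e+e_1}$. Therefore $c\,\mf b_{e+e_1}\not\subseteq I_{e+e_1}$. Since $c$ was arbitrary, $\mf b_\bullet$ is strongly $F$-regular.

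\textbf{Main obstacle.} The computation above is routine once the definitions are in place. The step needing care is the translation of the paper's definitions of $F$-split and strongly $F$-regular $F$-graded systems into the splitting-ideal conditions in the setup. This requires two facts:
\begin{itemize}[label={}]
\item the compatible choice of generators $\Phi^e$ with $\Phi^{e+e'}=\Phi^e\circ F^e_*\Phi^{e'}$, which holds for Gorenstein local $R$ up to units;
\item that the definition only asks for the existence of a suitable $e$, rather than a condition for all $e$ or one phrased through the full Cartier algebra with products of different degrees.
\end{itemize}
If the definition instead uses the full Cartier algebra, I would first check that its degree-$e$ part is exactly $F^e_*\mf b_e\cdot\Phi^e$. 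This should follow from the $F$-graded condition $\mf b_e\,\mf b_{e'}^{[p^e]}\subseteq \mf b_{e+e'}$ together with the composition rule.
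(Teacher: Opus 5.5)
Your proof is correct, but it takes a more direct route than the paper. The paper starts from the same kind of element $x\in\mf b_e\setminus\cc{e}{\mf m}$ (your $I_e$ is exactly $\cc{e}{\mf m}$), but uses it as a test element. The $p$-family condition puts $x$ in $\sqrt{\mf b_{e'}}$ for all $e'\gg 0$, so by \Cref{radical-test-element} it suffices to split along $x$ alone. The paper does this by applying $\bigcap_f\cc{f}{\mf m}=0$ (strong $F$-regularity of $R$) to the proper ideal $\cc{e}{\mf m}:x$, and then using \Cref{cc-properties} to get $x\notin\cc{e+f}{\mf m}:x^{p^f}\supseteq\cc{e+f}{\mf m}:\mf b_{e+f}$ for some $f$.

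You instead treat an arbitrary $c$ at once. You nest a splitting of $c$ from the full Cartier algebra inside the splitting of $x$, and let $x^{p^{e_1}}\in\mf b_{e+e_1}$ absorb the degree shift. This is the composition step from the proof of \Cref{test-element-pairs} applied directly, so you avoid both the localization behind the test-element theorem and the contraction identities.

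Your route also shows that the generator $\Phi$ is not essential. Suppose $\varphi\in\ca C^R_e$ and $x\in\mf b_e$ satisfy $\varphi(F^e_*x)=1$, and $\psi\in\ca C^R_{e_1}$ satisfies $\psi(F^{e_1}_*c)=1$. Then $\varphi\star\psi\star x^{p^{e_1}}\in\ca C^R_{e+e_1}\star\mf b_{e+e_1}$ sends $F^{e+e_1}_*c$ to $1$. So the Gorenstein hypothesis is not actually needed for this theorem.

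What the paper's route buys is consistency with its Fedder-type criterion (\Cref{gb-simplified}) and its test-element framework. It needs that framework again in \Cref{a-SFR-iff-b-SFR}, where $\mf a_\bullet$ is not a $p$-family and your direct composition is unavailable.

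Finally, the worries in your last paragraph do not arise here. \Cref{gor-cyclic} defines $\Phi^e=\Phi\star\cdots\star\Phi$ exactly, so $\Phi^{e+e'}=\Phi^e\star\Phi^{e'}$ holds on the nose. And the degree-$e$ piece is by definition $\ca C^R_e\star\mf b_e=\{\Phi^e\star a\mid a\in\mf b_e\}$.
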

We also describe a new operation on \mbox{$F$-graded} systems called \emph{$p$-stabilization}, which turns an \mbox{$F$-graded} system into a closely related $p$-family in a way that preserves strong $F$-regularity. In particular, when combined with the previous theorem, we can show:

\begin{thmB}[\Cref{a-SFR-iff-b-SFR}, \Cref{a-SFR-iff-b-Fsplit}]
Let $(R,\mf m)$ be a Gorenstein and strongly $F$-regular $F$-finite local ring. 
Let \(\mf a_\bullet\) be an $F$-graded system in $R$ with $\mf a_1\neq 0$, and let $\pstab{\mf a}_\bullet$ be the $p$-stabilization of $\mf a_\bullet$. 
Then $\mf a_\bullet$ is strongly $F$-regular if and only if $\pstab{\mf a}_\bullet$ is strongly $F$-regular if and only if $\pstab{\mf a}_\bullet$ is $F$-split. 
\end{thmB}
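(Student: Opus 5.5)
The plan is to prove the two equivalences separately. The second equivalence, \emph{$\pstab{\mf a}_\bullet$ is strongly $F$-regular if and only if $\pstab{\mf a}_\bullet$ is $F$-split}, should follow directly from \Cref{p-family-f-split-iff-sfr} (Theorem A). For that I only need that the $p$-stabilization $\pstab{\mf a}_\bullet$ is a $p$-family, i.e.\ $\pstab{\mf a}_e^{[p]}\subseteq \pstab{\mf a}_{e+1}$ for all $e$, and is nonzero. I expect this to be immediate from the construction: the $p$-stabilization is built precisely so that Frobenius powers of earlier terms are absorbed into later ones. The hypothesis $\mf a_1\neq 0$ guarantees the resulting $p$-family is not trivially zero. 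So the real content is the first equivalence, \emph{$\mf a_\bullet$ is strongly $F$-regular if and only if $\pstab{\mf a}_\bullet$ is}.

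Since $R$ is Gorenstein and local, $\hom_R(F^e_*R,R)$ is generated by a single map $\Phi^e$. Splitting conditions for an $F$-graded system $\mf b_\bullet$ then become ideal-membership statements: $\mf b_\bullet$ is strongly $F$-regular if and only if for every $c\neq 0$ there is some $e$ with $\Phi^e(F^e_*(c\,\mf b_e))=R$. Equivalently, $c\,\mf b_e\not\subseteq I_e$, where
\[
I_e=\{r : \Phi^e(F^e_* (rR))\subseteq \mf m\}=(\mf m^{[p^e]}:u)
\]
for a socle representative $u$. These ideals satisfy $I_e^{[p]}\subseteq I_{e+1}$ and $\Phi^{e}(F^e_* (x^{p^k}y))=x\,\Phi^{e-k}$-type relations, via $\Phi^{e+k}=\Phi^k\circ F^k_*\Phi^e$. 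I would use this description throughout.

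For \emph{$\mf a_\bullet$ strongly $F$-regular $\Rightarrow$ $\pstab{\mf a}_\bullet$ strongly $F$-regular}, it suffices to check $\mf a_e\subseteq\pstab{\mf a}_e$ (or containment after a bounded shift). A larger system has a larger Cartier algebra, and strong $F$-regularity passes to larger Cartier algebras. For the converse, fix $c\neq 0$ and suppose $\Phi^e(F^e_*(c\,\pstab{\mf a}_e))=R$. The generators of $\pstab{\mf a}_e$ are of the form $x^{p^k}y$ (or colon-type expressions) with $y$ in some $\mf a_{e'}$. Using the composition rule $\Phi^{e+k}=\Phi^k\circ F^k_*\Phi^e$ and the $F$-graded axiom $\mf a_{e}^{[p^{e'}]}\mf a_{e'}\subseteq \mf a_{e+e'}$, I would rewrite any splitting produced from $c\,\pstab{\mf a}_e$ as a splitting produced from $c'\,\mf a_{e''}$ for some $e''$. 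Here $c'$ is a fixed nonzero element, such as $c$ times an element of $\mf a_1$; this is where $\mf a_1\neq 0$ enters. Since strong $F$-regularity only requires \emph{some} $e$ for each test element, this transfers the condition back to $\mf a_\bullet$. One may also need a standard reduction: strong $F$-regularity can be tested with $c$ replaced by a power or a multiple, because $R$ itself is strongly $F$-regular, so multiplying the test element by a fixed nonzero element costs nothing.

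The main obstacle I anticipate is this converse step. The $p$-stabilization enlarges ideals by Frobenius-type operations that, a priori, only correspond to splittings of \emph{different} Frobenius degrees. I would need to control the degree shift uniformly and absorb the discrepancy into a single nonzero test element. If the direct manipulation with $\Phi^e$ gets messy, the fallback is to compare the Cartier algebras themselves. I would show that the Cartier algebra of $\pstab{\mf a}_\bullet$ is contained in the one generated by $\mf a_\bullet$ after twisting by a fixed nonzero element. I would then invoke the fact that strong $F$-regularity of a Cartier algebra is insensitive to such twists, using strong $F$-regularity of $R$.
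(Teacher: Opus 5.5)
Your handling of the second equivalence is fine and matches the paper. Since $r^{p^f}\in\mf a_{e+f}$ for $f\gg 0$ gives $(r^p)^{p^f}\in\mf a_{(e+1)+f}$, the system $\pstab{\mf a}_\bullet$ is a $p$-family, and \Cref{p-family-f-split-iff-sfr} applies. The forward direction of the first equivalence, however, rests on a false containment: $\mf a_e\subseteq\pstab{\mf a}_e$ holds for $p$-families but not for general $F$-graded systems. For example, $\mf a_e=\langle x^{1+p+\cdots+p^{e-1}}\rangle\subset k[x]$ has $\pstab{\mf a}_e=x\,\mf a_e$. Likewise, $\mf a_e=I^{[p^e]}:I$ in a regular local ring has $\pstab{\mf a}_e=I^{[p^e]}$ (\Cref{colon-system-stabilization}). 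What is true, and what the paper uses, is $\mf a_1\mf a_e\subseteq\pstab{\mf a}_e$ (\Cref{ag-ae-subset-be}). For $r\in\mf a_1$ and $s\in\mf a_e$, the element $(rs)^{p^f}$ is a multiple of $s^{p^f}r^{(p^f-1)/(p-1)}\in\mf a_e^{[p^f]}\prod_{i<f}\mf a_1^{[p^i]}\subseteq\mf a_{e+f}$. With this, fix $0\neq a\in\mf a_1$: a splitting of $\mf a_\bullet$ along $ca$ is a splitting of $\pstab{\mf a}_\bullet$ along $c$. This is where $\mf a_1\neq 0$ genuinely enters. Indeed, if $\mf a_e=R$ for even $e$ and $0$ for odd $e$, then $\mf a_\bullet$ is strongly $F$-regular but $\pstab{\mf a}_e=0$ for $e>0$. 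Your hedge ``or containment after a bounded shift'' does not identify this.

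The converse is the real gap. You describe $\pstab{\mf a}_e$ as generated by elements $x^{p^k}y$ with $y\in\mf a_{e'}$, but this does not match the definition: $r\in\pstab{\mf a}_e$ means $r^{p^f}\in\mf a_{e+f}$ for all $f\gg 0$. So the proposed rewriting has nothing to act on. Your fallback, comparing Cartier algebras after a fixed twist with a uniformly controlled degree shift, is impossible. For $2\le c<p$, the system $\mf a_e=\langle x^{p^e+c^e}\rangle\subset k[x]$ has $\pstab{\mf a}_e=\langle x^{p^e+1}\rangle$ (\Cref{zb:exponential-growth-insufficient}). For any fixed $a$ and $k$, one gets $x^a\,\pstab{\mf a}_e^{[p^k]}\not\subseteq\mf a_{e+k}$ as soon as $c^{e+k}>a+p^k$.

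The missing idea is that no uniform comparison is needed. Strong $F$-regularity asks for only one splitting per $c$, so the shift may depend on the witness. Take $\psi\in\ca C^R_e$ and $s\in\pstab{\mf a}_e$ with $\psi(F^e_*(cs))=1$. Choose $f$ with $s^{p^f}\in\mf a_{e+f}$, and a Frobenius splitting $\pi\in\ca C^R_f$ of $R$. Then
\[
(\psi\star\pi)\bigl(F^{e+f}_*(c^{p^f}s^{p^f})\bigr)=\psi\bigl(F^e_*(cs\,\pi(F^f_*1))\bigr)=1 .
\]
That is, $cs\notin\cc{e}{\mf m}$ forces $(cs)^{p^f}\notin\cc{e+f}{\mf m}$, as in \Cref{zb:cc-p-stable}. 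Since $c^{p^f-1}s^{p^f}\in\mf a_{e+f}$, the map $(\psi\star\pi)\star\bigl(c^{p^f-1}s^{p^f}\bigr)\in\sysalg{\mf a_\bullet}_{e+f}$ sends $F^{e+f}_*c$ to $1$. The paper runs the same Frobenius-power step on a product $rs\notin\cc{e}{\mf m}$ with $r\in\pstab{\mf a}_1$. It then uses $r^{p^{e_0}}$ as a test element via \Cref{radical-test-element}.

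A minor side issue: your formula $\cc{e}{\mf m}=\mf m^{[p^e]}:u$ is wrong for non-regular Gorenstein $R$. The correct formula is $(y_1^{p^e},\dots,y_d^{p^e}):u^{p^e}$ for a system of parameters $y_1,\dots,y_d$ and a socle representative $u$ of $R/(y_1,\dots,y_d)$. No explicit formula is needed for the argument, though.
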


In \Cref{sec:shapes}, we focus on polynomial rings and $F$-graded systems where every constituent ideal is monomial, and use combinatorics to gain insight into the properties of the system. 
By taking advantage of the correspondence between monomials in $k[x_1,\ldots, x_d]$ and points in $\N^d$, we define an \emph{associated $p$-body} in $(\N[1/p])^d$ (see \Cref{def:associated-shape}) to a monomial $F$-graded system. Conversely, we define an \emph{associated $p$-family} to subsets of $(\N[1/p])^d$ (see \Cref{def:associated-p-family}).
This construction extends Hern\'andez and Jeffries's notion of an associated $p$-body for a $p$-family, and gives a concrete way to encapsulate the asymptotic behavior of an $F$-graded system. Further, we show that this correspondence is intimately connected to $p$-stabilization:

\begin{thmC}[{\Cref{shape-stabilization-correspondence}}]
Let $S$ be a polynomial ring over an $F$-finite field. If $\mf b_\bullet$ is a monomial $F$-graded system, then the associated $p$-family of the associated $p$-body of $\mf b_\bullet$ is the $p$-stabilization, i.e., $\mf a_\bullet^{\Delta(\mf b_\bullet)} = \pstab{\mf b}_\bullet$. 
If $\Delta\subset (\N[1/p])^d$,
then
$\Delta(\mf a^\Delta_\bullet) = \Delta+ \left(\N[1/p]\right)^d$.

In particular, this gives a correspondence between monomial $p$-stable $F$-graded systems and subsets of $(\N[1/p])^d$ which are invariant under adding $\left( \N[1/p]\right)^d$.
\end{thmC}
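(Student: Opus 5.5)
The plan is to reduce everything to exponent vectors. For a monomial ideal $I$, write $E(I)\subseteq \N^d$ for the set of exponents of monomials in $I$. The first step is to record three monomial facts:
\begin{itemize}
\item $E(I^{[p^n]}) = p^n E(I) + \N^d$;
\item if $x^u \in I$ then $(x^u)^{p^n} = x^{p^n u}$;
\item a monomial ideal is determined by the monomials it contains.
\end{itemize}
The second step is to unwind the definitions. The associated $p$-body (\Cref{def:associated-shape}) is
\[
\Delta(\mf b_\bullet) = \bigcup_e p^{-e} E(\mf b_e).
\]
The associated $p$-family (\Cref{def:associated-p-family}) has $x^v \in \mf a^\Delta_e$ exactly when $v/p^e \in \Delta + (\N[1/p])^d$. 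In each statement I will compare monomial memberships and then invoke the third fact above.

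For the first claim, $x^v \in \mf a^{\Delta(\mf b_\bullet)}_e$ means that there exist $e'$ and $u \in E(\mf b_{e'})$ with $v/p^e \ge u/p^{e'}$ componentwise. Split into two cases.
\begin{itemize}
\item If $e' \le e$, this says $v \ge p^{e-e'}u$, i.e. $x^v \in \mf b_{e'}^{[p^{e-e'}]}$.
\item If $e' > e$, it says $p^{e'-e}v \ge u$, i.e. $(x^v)^{p^{e'-e}} \in \mf b_{e'}$.
\end{itemize}
I then match these two cases with the two ingredients in the definition of the $p$-stabilization $\pstab{\mf b}_\bullet$. As I read that definition, $\pstab{\mf b}_e$ collects the elements $r$ with $r^{p^n} \in \mf b_{e+n}$ for some $n$, closed up under the Frobenius-power images of earlier terms. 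By the $F$-graded condition, the first case is already subsumed, since $\mf b_{e'}^{[p^{e-e'}]} \subseteq \mf b_{e}$ up to the factor $\mf b_{e-e'}$, which contains a unit or is handled by the stabilization's construction.

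The main obstacle is precisely this step. I must show that for a monomial system the elementwise ``$p^n$-th root'' operation in $\pstab{\mf b}_\bullet$ yields a monomial ideal spanned by the $x^v$ with $x^{p^n v} \in \mf b_{e+n}$. Equivalently, a polynomial $r$ lies in $\pstab{\mf b}_e$ iff each of its monomial terms does.
\begin{itemize}
\item For the inclusion $\supseteq$, I take sums, using that $\pstab{\mf b}_e$ is an ideal.
\item For $\subseteq$, if $r^{p^n} \in \mf b_{e+n}$, then $r^{p^n} = \sum c_v^{p^n} x^{p^n v}$ has no cancellation among distinct exponents over a field of characteristic $p$. Since $\mf b_{e+n}$ is monomial, each $x^{p^n v}$ with $c_v \ne 0$ lies in it.
\end{itemize}
If the definition involves a union over $n$ of an increasing chain, I also check that the chain is increasing via the $F$-graded property, so the union is an ideal.

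For the second claim, take $\Delta \subseteq (\N[1/p])^d$. Then $w \in \Delta(\mf a^\Delta_\bullet)$ iff $w = v/p^e$ with $v/p^e \in \Delta + (\N[1/p])^d$, and every element of $(\N[1/p])^d$ has such a form for large $e$. Hence $\Delta(\mf a^\Delta_\bullet) = \Delta + (\N[1/p])^d$ directly; the only thing to verify is that $\mf a^\Delta_\bullet$ is genuinely a ($p$-)family.

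The final correspondence follows by combining the two claims. Here $p$-stable means $\pstab{\mf b}_\bullet = \mf b_\bullet$, and $\Delta(\mf b_\bullet)$ is automatically invariant under adding $(\N[1/p])^d$, since each $E(\mf b_e)$ is closed under adding $\N^d$.
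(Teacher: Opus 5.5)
There is a genuine gap in the first claim, coming from misreadings of two definitions. In the paper, $\alpha\in\Delta(\mf b_\bullet)$ means $p^e\alpha\in\log\mf b_e$ for \emph{all} $e\gg 0$. The set is $\bigcup_{f>0}\bigcap_{e\geq f}\frac{1}{p^e}\log\mf b_e$, not $\bigcup_e \frac{1}{p^e}\log\mf b_e$. Likewise, $\pstab{\mf b}_e=\{r \mid r^{p^f}\in\mf b_{e+f}\textrm{ for all } f\gg 0\}$, with no ``for some $n$'' and no closing up under Frobenius images of earlier terms. The two readings agree for $p$-families but not for general $F$-graded systems, and with yours the identity $\mf a^{\Delta(\mf b_\bullet)}_\bullet=\pstab{\mf b}_\bullet$ is false. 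Take $\mf b_e=\langle x^{1+p+\cdots+p^{e-1}}\rangle\subset k[x]$. Since $x\in\mf b_1$, your union contains $1/p$, so your $\mf a_1$ contains $x$. But $x^{p^f}\notin\mf b_{1+f}=\langle x^{1+p+\cdots+p^f}\rangle$ for every $f\geq 1$, so $x\notin\pstab{\mf b}_1=\langle x^2\rangle$.

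This is exactly where your case $e'\le e$ breaks. The ideal $\mf b_{e'}^{[p^{e-e'}]}$ need not lie in $\mf b_e$ (only $\mf b_{e'}^{[p^{e-e'}]}\mf b_{e-e'}$ does). Nor need it lie in $\pstab{\mf b}_e$: already $\mf b_1\not\subseteq\pstab{\mf b}_1$ here. No ``unit'' or construction repairs this. The same example shows the sets $\{r \mid r^{p^n}\in\mf b_{e+n}\}$ do not form an increasing chain in $n$, since $x\in\mf b_1$ but $x^p\notin\mf b_2$. So the $F$-graded property cannot supply that check, which is why the definition uses ``for all $f\gg 0$.''

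With the actual definitions no case split is needed. Reindexing $f\mapsto e+f$,
\[
\tfrac{1}{p^e}\beta\in\Delta(\mf b_\bullet)\iff x^{p^f\beta}\in\mf b_{e+f}\ \textrm{for all } f\gg 0\iff x^\beta\in\pstab{\mf b}_e .
\]
So the generators of $\mf a^{\Delta(\mf b_\bullet)}_e$ are precisely the monomials in $\pstab{\mf b}_e$, and equality follows once $\pstab{\mf b}_e$ is known to be a monomial ideal. Your no-cancellation argument proves exactly that. It is the paper's lemma on homogeneous systems, and it works verbatim with ``for all $f\gg 0$'' since only finitely many exponents occur. So that part of your plan is the right ingredient, and with it the paper's proof is just this unwinding.

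A smaller point concerns $\mf a^\Delta_e$. It is generated by the $x^\beta$ with $\beta/p^e\in\Delta$, so $\frac{1}{p^e}\log\mf a^\Delta_e=(\Delta\cap p^{-e}\N^d)+p^{-e}\N^d$. This can be strictly smaller than your $(\Delta+(\N[1/p])^d)\cap p^{-e}\N^d$; for example, $\Delta=\{1/p^2\}$ and $e=1$. Your second claim survives this, but the step you should make explicit is that $\mf a^\Delta_\bullet$ is a $p$-family. That property collapses $\bigcup_f\bigcap_{e\geq f}$ to $\bigcup_f$. The union over $f$ of $(\Delta\cap p^{-f}\N^d)+p^{-f}\N^d$ is then $\Delta+(\N[1/p])^d$, by taking $f$ large enough that both summands are $p^{-f}$-integral, as in the paper.
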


In \Cref{sec:shapes-zb}, we then illustrate how this can be used to actually compute the $p$-stabilization of some of the examples introduced back in \Cref{sec:stabilization-zb}.
Finally, in \Cref{sec:numerical-general}, we consider a numerical invariant of $F$-graded systems that extends the Hilbert-Kunz multiplicity, the $F$-signature, and the volume of a $p$-family. We end with some connections to the associated $p$-body and to Das and Meng's work on volumes \cite{Das+Meng.26}.

\begin{ackblock}
I would like to thank my advisor Karen Smith for all her guidance and suggestions. I would also like to thank Austyn Simpson for his detailed feedback on an earlier draft.
Thanks to Andy Gordon for the idea for the proof of \Cref{minl-system-shape},
and thanks to Javier Carvajal-Rojas for his suggestions on how to generalize to the Gorenstein case. I would like to thank Seungsu Lee and Suchitra Pande for the helpful conversations and Jack Jeffries for his feedback. Finally, thanks to my thesis committee members: Mircea Musta\c{t}\u{a}, Karen Smith, Austyn Simpson, Quentin F.~Stout, and Kevin Tucker.

A version of this work originally appeared as Chapter~4 of my dissertation, see \cite{Brosowsky.24}.

\end{ackblock}

\section{Background}

Given a ring $R$ with characteristic $p>0$,
the Frobenius map is the ring homomorphism
\[
F:R\to R \mbox{ defined by }F(r) = r^p.
\]
The \emph{Frobenius pushforward} $F_*R$ is the $R$-algebra with module structure coming from the Frobenius via restriction of scalars. Explicitly,
as a ring, $F_*R = \{F_*r\: | \: r\in R\}$ is exactly the same as $R$, just with formal symbol $F_*$ prepended everywhere. For example, multiplication is $(F_* r)(F_* s) = F_*(rs)$. However, its $R$-module structure is $rF_* s = F_* (r^p s)$.

The Frobenius map is now written as $F: R \to F_*R$ so that $F(r) = F_*(r^p)$, and the $R$-module action is now written $rF_*s = F_*(r^p s)$.
We can also iterate the Frobenius, writing $F^e: R\to F_*^e R$, where $F^e(r) = F_*^e(r^{p^e})$ and $rF_*^e s = F_*^e (r^{p^e}s)$.
Finally, the \emph{$e$-th Frobenius bracket power} of an ideal $I$ is
\[
I^{[p^e]} = (IF_*^eR) \cap R = \langle g^{p^e} \: | \: g\in I\rangle.
\]
If $I=\langle g_1,\ldots, g_t\rangle$, then $I^{[p^e]} = \langle g_1^{p^e},\ldots, g_t^{p^e}\rangle$.

Studying the $R$-module structure of $F_*^eR$ yields useful information about the singularities of $R$, for example, as in the following classic result of Kunz:

\begin{thm}[{\cite[Cor.~2.7]{Kunz.69}}]
\label{kunz-thm}
A Noetherian ring $R$ in prime characteristic is regular if and only if $F_*R$ is a flat $R$-module.
\end{thm}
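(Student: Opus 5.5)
The plan is to prove both directions of Kunz's theorem, treating the forward direction (regular implies flat) as the easier one and the converse (flat implies regular) as the substantial one. In both directions I first reduce to the case of a Noetherian local ring $(R,\mf m,k)$. Flatness is local, and $(F_*R)_{\mf p} \cong F_*(R_{\mf p})$ because localization commutes with Frobenius pushforward: inverting $s$ on the module side is the same as inverting $s^p$ on the ring side. Regularity is also checked locally. So it suffices to show that $R$ is regular if and only if $F_*R$ is flat over $R$, with $R$ local.

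\emph{Regular implies flat.} I pass to the completion $\widehat R$. The map $R\to\widehat R$ is faithfully flat and commutes with Frobenius, so flatness of $F_*R$ can be tested after completing, i.e.\ it is equivalent to flatness of $F_*\widehat R$ over $\widehat R$. By Cohen's structure theorem, $\widehat R\cong k[[x_1,\dots,x_d]]$. Since $F_*$ preserves flatness along the flat map $k[[x^p]]\to k[[x]]$, I would argue as follows. The ring $k[[x]]$ is a free module over $k^p[[x^p]]$ after the appropriate adjustment when $k$ is not perfect. A cleaner route avoids this issue by using the local flatness criterion: for local $R$, a finitely generated or $\mf m$-adically separated module $M$ is flat if $\operatorname{Tor}_1^R(k,M)=0$. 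Take the Koszul complex on a regular system of parameters $x_1,\dots,x_d$, which resolves $k$. Then $\operatorname{Tor}_i(k,F_*R)$ is computed by the Koszul complex on $x_1^p,\dots,x_d^p$ in $R$, pushed forward. Since $x_1^p,\dots,x_d^p$ is again a regular sequence, all higher Tor vanish. Hence $F_*R$ is flat, applying the local criterion in its form for modules over the Noetherian ring $R$ that are $\mf m$-adically ideal-separated, which $F_*R$ is.

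\emph{Flat implies regular.} Suppose $F_*R$ is flat, so all iterates $F^e_*R$ are flat as well. I would show that $R$ has finite projective dimension over itself on $k$ by a length/Tor argument, and then apply Serre's criterion. The key step is to use flatness to show that $\ell(R/\mf m^{[p^e]})=p^{ed}$ for $d=\dim R$, and then deduce regularity. Concretely, flatness gives $\operatorname{Tor}_i^R(M,F^e_*R)=0$, and $M\otimes F^e_*R \cong F^e_*(M\text{ with Frobenius base change})$. Applied to the resolution of $k$, if $\mf m$ is minimally generated by $x_1,\dots,x_n$, I would show that the Koszul homology $H_1(x^{p^e};R)$ equals the Frobenius base change of $H_1(x;R)$. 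A nonzero element of $H_1(x;R)$ lies in $\mf m\cdot$(chains) by minimality, so its image under $F^e$ lands in $\mf m^{[p^e]}$-multiples, which lie in $\mf m^{p^e}$. Krull intersection combined with flatness (faithful flatness, since the map is local) then forces $H_1(x;R)=0$. Thus $x_1,\dots,x_n$ is a regular sequence, so $n=\dim R$ and $R$ is regular.

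The main obstacle is this last step. One must carefully check that faithful flatness of $F^e$ transports $H_1(x;R)$ injectively into $H_1(x^{p^e};R)$, and that elements of the latter are forced arbitrarily deep into $\mf m$. I would first try the cleaner length version of the argument. Faithful flatness gives $\ell(R/\mf m^{[p^e]}) = \ell(R/\mf m)\cdot \ell(F_*^eR/\mf m F^e_*R)$ multiplicatively, hence $\ell(R/\mf m^{[p^e]})=p^{e\dim R}\cdot[\text{const}]$. Comparing with $\mf m^{[p^e]}\subseteq\mf m^{p^e}$ and the Hilbert–Samuel polynomial of degree $\dim R$ would then give $\mu(\mf m)=\dim R$.
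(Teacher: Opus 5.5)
The paper gives no proof of this statement; it only cites Kunz, so your argument has to stand on its own. Your reduction to the local case is fine, and so is the direction regular $\Rightarrow$ flat. The complex $K(x_1,\dots,x_d;R)\otimes_R F_*R\cong F_*K(x_1^p,\dots,x_d^p;R)$ is acyclic, and the local flatness criterion applies because $R\to F_*R$ is a local map of Noetherian local rings and $F_*R$ is finite over itself. Drop the detour through $\widehat R$: it is not needed, and without $F$-finiteness $F_*R\otimes_R\widehat R$ need not equal $F_*\widehat R$, so the equivalence you assert there is not automatic.

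The converse has a genuine gap: neither of your sketches actually forces regularity.

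\emph{Koszul route.} You correctly get that $[z]\mapsto[z^{[q]}]$ embeds $H_1(x;R)$ into $H_1(x_1^q,\dots,x_n^q;R)$. You also correctly get that, under flatness, every cycle of $K(x^q;R)$ lies in $\mf m^{[q]}K_1$. But every boundary $x_j^qe_i-x_i^qe_j$ lies there too, and the same containment holds in a regular local ring, so nothing is contradicted. Krull's intersection theorem needs one fixed nonzero element of one fixed module lying in $\bigcap_q \mf m^q M$. Here both the module $H_1(x^q;R)$ and the element $z^{[q]}$ change with $q$.

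\emph{Length route.} The identity $\ell(R/\mf m^{[q]})=\ell(R/\mf m)\cdot\ell(F^e_*R/\mf m F^e_*R)$ is just the definition rewritten, so it cannot produce $q^d$. Getting $\ell(R/\mf m^{[q]})=q^d$ needs real input, for example multiplicativity of length under the exact functor $-\otimes_R F^e_*R$ applied to the Koszul homology of a system of parameters $y$, combined with Serre's $\chi(y^q;R)=q^d e(y;R)$. Even granting $\ell(R/\mf m^{[q]})=q^d$, the containment $\mf m^{[q]}\subseteq \mf m^q$ only gives $\ell(R/\mf m^q)\le q^d$. The non-regular ring $k[[x,y,z]]/(xy-z^2)$ satisfies this, since there $\ell(R/\mf m^q)=q^2$. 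So $\mu(\mf m)=\dim R$ does not follow.

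Your containment does finish the job once you have a cycle that does not depend on $q$.

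\emph{Depth zero.} If $\operatorname{depth}R=0$, take $0\neq u$ with $\mf m u=0$. Then $ue_1$ is a cycle of every $K(x^q;R)$, so $u\in\bigcap_q \mf m^{[q]}=0$, a contradiction.

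\emph{General case.} Let $s=\operatorname{depth}R$, let $y$ be a maximal regular sequence, set $\bar R=R/(y)$, and pick $0\neq \bar u\in\bar R$ with $\mf m\bar u=0$. Take a minimal free resolution $F_\bullet$ of $k$. Flatness makes $F_\bullet\otimes_R F^e_*R$, i.e.\ $F_\bullet$ with every matrix entry raised to the $q$-th power, a free resolution of $R/\mf m^{[q]}$. After tensoring with $\bar R$ it is exact in degrees $>s=\operatorname{pd}_R\bar R$. Suppose $F_{s+1}\neq 0$ and let $\epsilon$ be a basis vector of $F_{s+1}$.
\begin{itemize}
\item $\bar u\epsilon$ is a cycle, because the entries lie in $\mf m^{[q]}$ and $\mf m\bar u=0$.
\item By exactness it is a boundary, hence lies in $\mf m^{[q]}$ times the free module, so $\bar u\in \mf m^{[q]}\bar R$ for every $q$.
\item Krull's intersection theorem in the fixed ring $\bar R$ now gives $\bar u=0$, a contradiction.
\end{itemize}
Hence $\operatorname{pd}_R k\le s<\infty$, and $R$ is regular by Serre's theorem.
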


Our focus will be on classes of singularities defined by the behavior of maps in $\hom_R(F_*^eR,R)$. Thus we must first consider a cohesive way to think about collections of such maps

\begin{defn}
\label{def:full-cartier-algebra}
The \emph{full Cartier algebra} on $R$ is the graded non-commutative ring 
\[
\ca C^R = \bigoplus_{e\geq 0} \hom_R(F_*^e R, R),
\]
where multiplication is as defined below in \Cref{eq:cartier-mult}.
\end{defn}

As we shall see, this multiplication is basically composition: First, given a map $\psi \in \hom_R(F_*^dR, R)$, write $F_*^e \psi: F_*^{e+d}R\to F_*^eR$ for the \emph{Frobenius pushforward} of the map, where 
\[
(F_*^e\psi)(F_*^{e+d} r) = (F_*^e \psi)(F_*^e(F_*^d r)) = F_*^e(\psi(F_*^d r)).
\]
Now define a (non-commutative) multiplication on the abelian group $\ca C^R = \bigoplus_e \hom_R(F_*^e R, R)$ as follows: given maps $\phi \in \hom_R(F_*^e R, R)$ and $\psi \in \hom_R(F_*^d R, R)$, we define their product as
\begin{equation}
\label{eq:cartier-mult}
\phi\star \psi = \phi \circ F_*^e \psi.
\end{equation}
More concretely, for any $r\in R$ we have 
\[
(\phi\star \psi) (F_*^{e+d} r) = \phi \left( F_*^e(\psi(F_*^d r))\right).
\]

Note that $F_*^0R$ here means $R$ itself as an $R$-module, so that $\ca C^R_0 = \hom_R(R,R)\cong R$.

\begin{defn}
\label{def:cartier-subalgebra}
	A \emph{Cartier algebra} $\ca D$ is a graded subring of $\ca C^R$ such that $\ca D_0 = R$. In particular, $\ca D$ has the form $\ca D = \bigoplus_e \ca D_e$ where $\ca D_e \ \subseteq \hom_R(F_*^eR, R)$ for all $e\geq 0$.
\end{defn}

\begin{zb}[{\cite[Rmk.~3.10]{Schwede.11a}}]
\label{zb:roundup-cartier-algebra}
Let $(R,\mf a^t)$ be a pair where $\mf a$ is an ideal and the formal exponent $t$ is a positive real number. Then the corresponding Cartier algebra $\ca C^{\mf a^t}$ has 
\[
\ca C^{\mf a^t}_e = \hom_R(F_*^e R, R)\star \mf a^{\lceil t(p^e-1)\rceil} 
= \left\{\sum_i \varphi_i\circ F_*^e a_i \: \big| \: \varphi_i \in \hom_R(F_*^eR,R),\ a_i \in \mf a^{\lceil t(p^e-1)\rceil}   \right\}.
\]
\end{zb}

\begin{zb}
Let $I$ be an ideal in $R$. Then we can define a Cartier algebra $\ca D\subset \ca C^R$ via
\[
\ca D_e := \big\{\mbox{maps $\psi: F_*^eR \to R$ which descend to $\overline \psi: F_*^e(R/I) \to R/I$} \big\},
\]
where ``descend'' means the following diagram commutes:
\begin{center}
\begin{tikzcd}[ampersand replacement =\&]
F_*^eR \rar["\psi"]\dar \& R\dar \\
F_*^e(R/I)\rar["\overline \psi"] \& R/I
\end{tikzcd}
\end{center}
\end{zb}

The following useful result will allow us to give a more explicit description of the above example in the case that $R$ is a regular local ring which is $F$-finite (see \Cref{def:cartier-ver-f-sing}):
\begin{lemma}[{Fedder's Lemma, \cite[Lemma~2.1]{Glassbrenner.96}}]
\label{qr-hom-presentation}
Let $R$ be an $F$-finite regular local ring and consider ideal $I\subset R$. Then
\[
\hom_{R/I}(F_*^e (R/I), R/I) \cong F_*^e\left( \frac{I^{[p^e]}:_RI}{I^{[p^e]}}  \right)
\]
as $R/I$-modules. 
\end{lemma}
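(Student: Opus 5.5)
The plan is to prove Fedder's Lemma by reducing to the free case and identifying which maps $F_*^eR\to R$ descend to $R/I$. Since $R$ is an $F$-finite regular local ring, \Cref{kunz-thm} says $F_*^eR$ is flat. Being finitely generated over a local ring, it is therefore a finite free $R$-module. Hence $\hom_R(F_*^eR,R)$ is free as well. I would first show that it is generated as an $F_*^eR$-module by a single map $\Phi$, so that the map
\[
F_*^eR\to\hom_R(F_*^eR,R),\qquad F_*^e s\mapsto \Phi\circ F_*^e s,
\]
is an isomorphism. This holds because $\hom_R(F_*^eR,R)\cong F_*^e\omega_R\cong F_*^eR$ for Gorenstein (in particular regular) local $R$. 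Alternatively, after completing, one can use the explicit monomial basis and take $\Phi$ to be the projection onto the coefficient of the socle monomial $(x_1\cdots x_d)^{p^e-1}$. To avoid completion I would use the duality statement, together with the fact that these isomorphisms are compatible with flat base change.

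The second step is to show $\Phi(F_*^e J)\subseteq I$ if and only if $J\subseteq I^{[p^e]}$. The containment $I^{[p^e]}\subseteq\{s : \Phi(F_*^e sR)\subseteq I\}$ is immediate from $R$-linearity, since $\Phi(F_*^e g^{p^e}r)=g\,\Phi(F_*^e r)$. For the converse, suppose $\Phi(F_*^e(sR))\subseteq I$. Then every map $\phi=\Phi\circ F_*^e t$ sends $F_*^e s$ into $I$, so $F_*^e s$ lies in the kernel of $F_*^eR\to \hom_R(\hom_R(F_*^eR,R),R/I)$. Because $F_*^eR$ is free, the evaluation pairing is perfect, and this kernel is $IF_*^eR=F_*^e(I^{[p^e]})$. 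I expect this step, which relies on freeness, to be the main obstacle to state cleanly.

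Now I would assemble the descent condition. A map $\psi=\Phi\circ F_*^e s$ descends to $R/I$ exactly when $\psi(F_*^e I)\subseteq I$, that is, when $\Phi(F_*^e(sI R))\subseteq I$. By the previous step this says $sI\subseteq I^{[p^e]}$, i.e.\ $s\in I^{[p^e]}:_R I$. Every map $F_*^e(R/I)\to R/I$ lifts to a map $F_*^eR\to R$: compose with $F_*^eR\to F_*^e(R/I)$, then lift through $R\to R/I$ using that $F_*^eR$ is free, hence projective. So the descent map
\[
F_*^e(I^{[p^e]}:_RI)\to\hom_{R/I}(F_*^e(R/I),R/I)
\]
is surjective. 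Its kernel consists of those $s$ with $\Phi(F_*^e(sR))\subseteq I$, which by the previous step is exactly $F_*^e I^{[p^e]}$. The map is linear for the $F_*^eR$-structure, hence also for the $R/I$-module structure. This yields the claimed isomorphism.
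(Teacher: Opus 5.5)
Your argument is correct, and it is essentially the standard Fedder--Glassbrenner proof that the paper cites rather than reproves. It uses three ingredients: cyclicity of $\hom_R(F^e_*R,R)$ over $F^e_*R$ (as in \Cref{gor-cyclic}); the criterion $\Phi(F^e_*J)\subseteq I \iff J\subseteq I^{[p^e]}$, obtained from a dual basis of the free module $F^e_*R$ (Fedder's Lemma~1.6, as invoked in \Cref{zb:regular-cc}); and lifting of maps via projectivity of $F^e_*R$. Only surjectivity of $F^e_*s\mapsto \Phi\circ F^e_*s$ is actually needed, and your aside about completion and flat base change is unnecessary once you invoke $\hom_R(F^e_*R,R)\cong F^e_*R$.
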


\begin{zb}[Fedder's Lemma, rephrased]
\label{zb:fedder-cartier-algebra}
Let $(R,\mf m)$ be an $F$-finite regular local ring and let $I$ be an ideal. Then the Cartier algebra $\ca D$ on $R$ composed of all maps which descend from $\ca R$ to $\ca C^{R/I}$ is
\[
\ca D_e = \hom_R(F_*^eR, R) \star \left(I^{[p^e]}:I\right).
\]
\end{zb}

Schwede introduced Cartier algebras as a further generalization of the study of singularities of (divisor) pairs, which itself was motivated by studying Frobenius splitting and strong $F$-regularity for rings. The definitions below for Cartier algebras line up with the standard definitions for rings by taking $\ca D = \ca C^R$ to be the full Cartier algebra.

\begin{defn}
\label{def:cartier-ver-f-sing}
Let $R$ be a ring of prime characteristic, and let $\ca D$ be a Cartier algebra on $R$.
\begin{itemize}

\item The pair~$(R,\ca D)$ is \emph{$F$-finite} if $R$ is $F$-finite, i.e., $F_*R$ is a finite $R$-module.

\item The pair~$(R,\ca D)$ is \emph{Frobenius split} if there exists some  $e>0$ and some $\phi \in \ca D_e$ with $\phi(F_*^e 1)=1$.

\item If $c$ is an element of $R$, then the pair~$(R,\ca D)$ is \emph{eventually Frobenius split along $c$} if there exists some $e>0$ and some $\phi \in \ca D_e$ with $\phi(F_*^e c)=1$.

\item The pair~$(R,\ca D)$ is \emph{strongly $F$-regular} if it is eventually Frobenius split along every $c$ which is not in any minimal prime of $R$.
\end{itemize}
\end{defn}
\begin{defn}
\label{def:cartier-contraction}
Let $I\subset R$ be an ideal, and $\ca D$ be a Cartier algebra. The \emph{$e^{\textrm{th}}$ Cartier contraction} of $I$ with respect to $\ca D$ is
\[
\cc{\ca D_e}{I} = \{r\in R \: | \: \varphi(F_*^er)\in I \ \,\forall \varphi \in \ca D_e\}.
\]
If $\ca D = \ca C^R$, the full Cartier algebra, then we will simply write $\cc{e}{I}$. More explicitly,
\[
\cc{e}{I} = \{r\in R \: | \: \varphi(F_*^er)\in I \ \,\forall \varphi \in \hom_R(F_*^eR,R)\}.
\]
This is often denoted as $I_e(I)$.
\end{defn}

\begin{zb}
\label{zb:regular-cc}
If $(R,\mf m)$ is an $F$-finite regular local ring, then $A_e(I) = I^{[p^e]}$. We can see this using \cite[Lemma~1.6]{Fedder.83}, 
which says $(\Phi^e\star r)(F_*^eJ)\subseteq I$ if and only if $r\in I^{[p^e]}:_R J$, where $I,J$ are ideals of $R$, $r\in R$, and $\Phi^e$ is the free generator of $\hom_R(F_*^eR,R)$, see \Cref{gor-cyclic} below.
\end{zb}

We now point out some basic properties of the Cartier contraction which will be useful later, and an observation about Gorenstein rings which is necessary to show these properties.
\begin{prop}[see {\cite[Ex.~2.6]{Blickle+etal.12} and \cite[Rmk.~4.4]{Schwede.11}}]
\label{gor-cyclic}
Let $(R,\mf m)$ be an $F$-finite Gorenstein local ring. Then $\ca C^R_1 = \hom_R(F_*^eR,R) \cong F_*^eR$ as $F_*^eR$-modules, and thus there is a map $\Phi \in \ca C^R_1$ which generates $\ca C^R_1$ as an $F_*R$-module. Further, $\Phi^e = \Phi \star \cdots \star \Phi$ generates $\ca C^R_e$.
\end{prop}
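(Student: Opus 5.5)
The plan is to deduce the statement from Grothendieck duality for the finite morphism $F^e$, together with the fact that a Gorenstein local ring has a canonical module isomorphic to $R$. Since $R$ is $F$-finite, $F^e\colon R\to F_*^eR$ is a finite ring map. Hence
\[
\hom_R(F_*^eR,\omega_R)\cong \omega_{F_*^eR} = F_*^e\omega_R
\]
as $F_*^eR$-modules. This is the standard description of the canonical module of a finite extension: $\hom_R(-,\omega_R)$ carries a canonical module of $R$ to one of the finite extension. One also uses that $F_*^eR$ is again Gorenstein local, because it is just $R$ with a relabelled module structure.

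Because $R$ is Gorenstein local, we have $\omega_R\cong R$. Substituting gives
\[
\hom_R(F_*^eR,R)\cong F_*^eR
\]
as $F_*^eR$-modules. In particular this module is free of rank one over $F_*^eR$. Taking $e=1$, we choose $\Phi$ to be the image of $F_*1$ under this isomorphism, and then $\Phi$ generates $\ca C^R_1$.

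If one prefers to avoid the duality formalism, the same isomorphism can be checked by hand. Note that $\hom_R(F_*^eR,R)$ is a finitely generated maximal Cohen--Macaulay $F_*^eR$-module, since it is the $R$-dual of a maximal Cohen--Macaulay module. One then computes its type, or equivalently checks that its completion is the canonical module, using local duality. For the write-up I will simply cite the duality statement.

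For the claim about $\Phi^e$ I argue by induction on $e$; the case $e=1$ is the previous step. Suppose $\Phi^{e}$ generates $\ca C^R_e$, and let $\psi\in\hom_R(F_*^{e+1}R,R)$. Consider the map $\psi\circ F^e_*(\cdot)$, viewed via restriction. A cleaner route is to know already that $\hom_R(F_*^{e+1}R,R)$ is cyclic, generated by some $\Psi$ (this is the $e+1$ case of the first part). Then it suffices to show that $\Phi^{e+1}=\Phi\star F_*\Phi^{e}$ is a generator. Since $\hom_R(F_*^{e+1}R,R)$ is free of rank one over the local ring $F_*^{e+1}R$, we can write $\Phi^{e+1}=\Psi\cdot F_*^{e+1}u$, and it is enough to show that $u$ is a unit, that is, that $\Phi^{e+1}$ does not lie in $\mf m_{F^{e+1}_*R}\cdot\hom$.

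The main obstacle is this unit check. For it I would use the standard surjectivity of the composition map: the map
\[
\hom_R(F_*R,R)\otimes_{F_*R}F_*\hom_R(F_*^eR,R)\to\hom_R(F_*^{e+1}R,R),\qquad \phi\otimes F_*\psi\mapsto \phi\circ F_*\psi,
\]
is surjective. This follows from the adjunction
\[
\hom_R(F_*^{e+1}R,R)\cong\hom_{F_*R}\bigl(F_*^{e+1}R,\hom_R(F_*R,R)\bigr).
\]
Because $\hom_R(F_*R,R)\cong F_*R$ is generated by $\Phi$, every $\psi\in\hom_R(F_*^{e+1}R,R)$ factors as $\Phi\circ F_*\psi'$ for some $\psi'\in\hom_R(F_*^eR,R)$. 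By induction $\psi'=\Phi^e\cdot F_*^e r$ for some $r\in R$, and therefore
\[
\psi=\Phi\star(\Phi^e\cdot F_*^e r)=\Phi^{e+1}\cdot F_*^{e+1}r.
\]
This shows that $\Phi^{e+1}$ generates $\ca C^R_{e+1}$ and completes the induction.
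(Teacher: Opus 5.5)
Your argument is correct and matches the standard proof behind the references the paper cites in place of a proof. Duality for the finite local map $R\to F^e_*R$, together with $\omega_R\cong R$, gives $\hom_R(F^e_*R,R)\cong F^e_*R$. The Hom--tensor adjunction, combined with freeness of $\ca C^R_1$ on $\Phi$, then writes every $\psi\in\ca C^R_{e+1}$ as $\Phi\star\psi''$ with $\psi''\in\ca C^R_e$, so induction shows that $\Phi^{e+1}$ generates.

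You can delete the detour through an abstract generator $\Psi$ and the ``unit check'' (which you never complete), along with the dangling remark about $\psi\circ F^e_*(\cdot)$, since the adjunction argument proves directly that $\Phi^{e+1}$ generates. It is worth saying explicitly that this step uses freeness of $\ca C^R_1$, not merely cyclicity, because $F^{e+1}_*R$ need not be projective over $F_*R$.
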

\begin{prop}
\label{cc-properties}
Suppose $R$ is $F$-finite Gorenstein local, and $I,J$ are ideals of $R$. Then
\begin{enumerate}
\item $\cc{e}{\cc{f}{I}}=\cc{e+f}{I}$
\item $\cc{e}{I:J} = \cc{e}{I}:J^{[p^e]}$
\end{enumerate}
\end{prop}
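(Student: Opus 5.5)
The plan is to reduce both statements to explicit descriptions in terms of the generator $\Phi$ from \Cref{gor-cyclic}. Since $R$ is $F$-finite Gorenstein local, every $\psi\in\hom_R(F_*^eR,R)$ has the form $\Phi^e\star r = \Phi^e\circ F_*^e r$ for some $r\in R$. Hence
\[
\cc{e}{I}=\{s\in R \mid \Phi^e(F_*^e(rs))\in I \ \forall r\in R\},
\]
so $\cc{e}{I}$ is the largest ideal $K$ with $\Phi^e(F_*^eK)\subseteq I$. The set is an ideal because $\Phi^e(F_*^e(rs))$ is additive in $s$ and the condition is closed under multiplication by $R$.

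For (1), I would check both inclusions using $\Phi^{e+f}=\Phi^e\star\Phi^f=\Phi^e\circ F_*^e\Phi^f$. This says that $\Phi^{e+f}(F_*^{e+f}x)=\Phi^e(F_*^e\,\Phi^f(F_*^f x))$.

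For $\supseteq$, take $s\in\cc{e+f}{I}$. We need $\Phi^f(F_*^f(rs))\in\cc{e}{I}$ for all $r$, that is, $\Phi^e(F_*^e(t\,\Phi^f(F_*^f(rs))))\in I$ for all $t$. By $R$-linearity of $\Phi^f$ we have $t\,\Phi^f(F_*^f(rs))=\Phi^f(F_*^f(t^{p^f}rs))$, so this element equals $\Phi^{e+f}(F_*^{e+f}(t^{p^f}rs))$, which lies in $I$.

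For $\subseteq$, take $s\in\cc{f}{\cc{e}{I}}$. Then $\Phi^f(F_*^f(rs))\in\cc{e}{I}$ for all $r$, so $\Phi^e(F_*^e\Phi^f(F_*^f(rs)))\in I$, which is exactly $\Phi^{e+f}(F_*^{e+f}(rs))\in I$.

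Note that this proves $\cc{f}{\cc{e}{I}}=\cc{e+f}{I}$, with the indices in the opposite order from the statement. Symmetry of $e$ and $f$ then gives the stated form.

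For (2), the key identity is $j\,\Phi^e(F_*^e x)=\Phi^e(F_*^e(j^{p^e}x))$.

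For $\subseteq$, take $s\in\cc{e}{I:J}$ and $j\in J$. Then $\Phi^e(F_*^e(r j^{p^e}s))=j\,\Phi^e(F_*^e(rs))\in J(I:J)\subseteq I$. So $j^{p^e}s\in\cc{e}{I}$, and since $J^{[p^e]}$ is generated by such $j^{p^e}$, we get $s\in\cc{e}{I}:J^{[p^e]}$.

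For $\supseteq$, take $s$ with $sJ^{[p^e]}\subseteq\cc{e}{I}$. For each $r$ and $j\in J$ we get $j\,\Phi^e(F_*^e(rs))=\Phi^e(F_*^e(rj^{p^e}s))\in I$. Thus $\Phi^e(F_*^e(rs))\in I:J$, and so $s\in\cc{e}{I:J}$.

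The only real subtlety is the first step: using \Cref{gor-cyclic} to replace "for all $\varphi$" by "for all $\Phi^e\star r$", together with the compatibility $\Phi^{e+f}=\Phi^e\star\Phi^f$. Everything after that is bookkeeping with $R$-linearity and the order of composition in the $\star$ product.
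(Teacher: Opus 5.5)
Your proof is correct and takes essentially the same route as the paper. Part (1) reduces to $\ca C^R_e\star\ca C^R_f=\ca C^R_{e+f}$, which you make explicit as $\Phi^{e+f}\star r=\Phi^e\star(\Phi^f\star r)$, and part (2) is the same definition-chase using $j\,\varphi(F_*^e x)=\varphi(F_*^e(j^{p^e}x))$; the only cosmetic difference is that the paper quantifies over all maps rather than over $\Phi^e\star r$, which also makes visible that part (2) needs no Gorenstein hypothesis.
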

\begin{proof}
For the first equality: since $R$ is Gorenstein local, $\ca C^R_{e}\star \ca C^R_f = \ca C^R_{e+f}$. Thus
	\begin{align*}
\cc{e}{\cc{f}{I}} &= \{ r \: |\: \varphi(F_*^er)\in \cc{f}{I}\;\;  \forall \varphi\in \ca C^r_e\}\\
    &= \{r\: | \: \psi(F_*^f(\vphi(F_*^e r))) \in I \;\; \forall \psi\in \ca C^R_f, \forall \vphi\in \ca C^r_e\} \\
    &= \{ r \: | \: \phi(F_*^{e+f}(r)) \in I\;\;  \forall \phi\in \ca C^R_{e+f}\} = \cc{e+f}{I}.
	\end{align*}

For the second equality:
\begin{align*}
\cc{e}{I:J} &= \{ r \:|\: \varphi(F_*^e r) \in I:J\;\; \forall \varphi \in \ca C^R_e\} \\
    &= \{ r \:|\: \varphi(F_*^e r)J \in I\;\; \forall \varphi \in \ca C^R_e\} \\
    &= \{ r \:|\: \varphi(F_*^e (rJ^{[p^e]})) \in I\;\; \forall \varphi \in \ca C^R_e\}
    = \cc{e}{I}:J^{[p^e]}.\qedhere
\end{align*}
\end{proof}

\section{\texorpdfstring{Preliminaries on $F$-graded systems}{Preliminaries on F-graded systems}}
\label{sec:f-graded-preliminaries}

\begin{defn}[{\cite[Def~3.20]{Blickle.13}}]
	\label{def:f-graded}
Let $R$ be a commutative Noetherian ring of prime characteristic $p$, and let $\mf a_\bullet = \{\mf a_n\}_{n\in \N}$ be a sequence of ideals. We say $\mf a_\bullet$ is an  \emph{$F$-graded system of ideals} if
\begin{enumerate}
 \item $\mf a_0=R$, and
\item  
$\mf a_e^{[p^f]}\mf a_f \subseteq \mf a_{e+f}$ for all $e,f\geq 1$. 
\end{enumerate}
We will refer to $\mf a_e$ as the \emph{degree $e$ piece} (or \emph{degree $e$ ideal}) of the system.
\end{defn}
Blickle first introduced a version of this definition in \cite[Def~3.20]{Blickle.13}. 
We use the definition as it later appears in \cite[Def~4.7]{Blickle+etal.12}.

\begin{zb}[Three main examples]
\label{zb:basic-three}
For any ring $R$ of prime characteristic $p$ and any fixed ideal $J$ in $R$, the following three systems of ideals are all $F$-graded:
\begin{enumerate}
\item Setting $\mf a_e = \prod_{i=0}^{e-1} J^{[p^i]}$ for $e>0$.
\item Setting $\mf b_e = J^{[p^e]}:J$ for $e>0$.
\item Setting $\mf c_e = J^{\lceil t (p^e-1)\rceil}$ for all $e> 0$, where $t\in \R_{>0}$ is fixed.
\end{enumerate}
\end{zb}
\begin{proof}
For the first system, we will show by induction on $n$ that $\mf a_n \supset \mf a_{n-e}^{[p^e]}\mf a_e$ for all $0\leq e\leq n$.
The base cases of $n=0$ and $n=1$ are clear. Now suppose we want to prove the statement for $n+1$. If $e=0$ or $e=n+1$, again the result is clear. Otherwise, 
\[
\mf a_{n+1-e}^{[p^e]}\mf a_e 
    = \left(\prod_{i=0}^{n-e} J^{[p^i]}\right)^{[p^e]} \left(\prod_{j=0}^{e-1} J^{[p^j]}\right)
    = \left(\prod_{i=0}^{n-e} J^{[p^{i+e}]}\right) \left(\prod_{j=0}^{e-1} J^{[p^j]}\right)
    = \prod_{i=0}^{n} J^{[p^{i}]} = \mf a_{n+1}.
\]

For the second system, note that $(J^{[p^e]}:J)^{[p^f]}\subseteq J^{[p^{e+f}]}:J^{[p^f]}$, since for any $x\in J^{[p^e]}:J$, we have
\[
x^{p^f}J^{[p^f]} \subseteq (xJ)^{[p^f]} \subseteq (J^{[p^e]})^{[p^f]}.
\]
This containment makes it clear that
\[
(\mf b_e^{[p^f]}\mf b_f)J \subseteq (J^{[p^{e+f}]}:J^{[p^f]})(J^{[p^f]}:J)J \subseteq J^{[p^{e+f}]},
\]
and so
\[
\mf b_e^{[p^f]}\mf b_f \subseteq J^{[p^{e+f}]}:J = \mf b_{e+f}.
\]


For the last system, since 
\[
p^f\left\lceil t(p^e-1)\right\rceil +\left\lceil t(p^f-1)\right\rceil \geq \left\lceil t(p^{e+f}-p^f)\right\rceil + \left\lceil t(p^f-1)\right\rceil \geq \left\lceil t(p^{e+f}-p^f) + t(p^f-1)\right\rceil,
\]
we immediately have
\[
\mf c_e^{[p^f]}\mf c_f \subseteq J^{p^f\lceil t(p^e-1)\rceil} J^{\lceil t(p^f-1)\rceil} \subseteq J^{\lceil t(p^{e+f}-1)\rceil} \subseteq \mf c_{e+f}.
\qedhere
\]
\end{proof}
\begin{rmk}
\label{minl-sys-containment}
A similar argument as for system $\mf a_\bullet$ above also tells us that for \emph{any} $F$-graded system~$\mf d_\bullet$, we have $\mf d_n \supset \prod_{i=0}^{n-1}\mf d_1^{[p^i]}$, and $\mf d_{en} \supset \prod_{i=0}^{n-1} \mf d_e^{[p^{ei}]}$. In particular, system $\mf a_\bullet$ above is the minimal $F$-graded system that has $\mf a_1=J$.
\end{rmk}

The following straightforward result of Blickle, Schwede, and Tucker illustrates the original motivation behind $F$-graded systems: they are a useful way to describe Cartier algebras (as defined in \Cref{def:cartier-subalgebra}).
\begin{lemma}[{\cite[Lemma~4.9]{Blickle+etal.12}}]
If $(R,\mf m)$ is an $F$-finite local ring, then every $F$-graded system of ideals $\mf a_\bullet$ of $R$ defines a Cartier algebra $\sysalg{\mf a_\bullet}$ on $R$ by setting $\sysalg{\mf a_\bullet}_e := \ca C^R_e \star \mf a_e$ for all $e\geq 0$. Furthermore, if $R$ is Gorenstein, then \emph{every} Cartier algebra $\ca D$ arises uniquely in this manner.
\end{lemma}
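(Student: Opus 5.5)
The statement has two parts, and I would prove them separately.

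\textbf{Existence.} Fix an $F$-graded system $\mf a_\bullet$ and set $\sysalg{\mf a_\bullet}_e := \ca C^R_e \star \mf a_e$. Here $\mf a_e$ is regarded as the set of maps $F_*^e a$ (premultiplication by $a$). Each $\sysalg{\mf a_\bullet}_e$ is then an additive subgroup of $\ca C^R_e$, so there are three things to check.
\begin{enumerate}
\item Degree zero: since $\mf a_0 = R$ and $\ca C^R_0 \cong R$, we get $\sysalg{\mf a_\bullet}_0 = R$.
\item Closure under $\star$: for $\phi \in \ca C^R_e$, $a \in \mf a_e$, $\psi \in \ca C^R_f$ and $b \in \mf a_f$, compute
\[
(\phi\circ F_*^e a)\star(\psi\circ F_*^f b) = \phi\circ F_*^e\big(a\cdot \psi(F_*^f(b\,\cdot))\big) = \phi\circ F_*^e\psi\circ F_*^{e+f}(a^{p^f} b),
\]
using $R$-linearity of $\psi$ to pull $a$ inside as $a^{p^f}$.
\item The product lands in $\sysalg{\mf a_\bullet}_{e+f}$: $\phi\circ F_*^e\psi = \phi\star\psi \in \ca C^R_{e+f}$, and $a^{p^f}b \in \mf a_e^{[p^f]}\mf a_f \subseteq \mf a_{e+f}$ by the $F$-graded axiom.
\end{enumerate}
Closure under sums follows by bilinearity. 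This part is routine.

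\textbf{Gorenstein case, existence of a presenting system.} Take $R$ Gorenstein and use \Cref{gor-cyclic}: $\ca C^R_e$ is generated by $\Phi^e$ as an $F_*^eR$-module, so every $\varphi \in \ca C^R_e$ can be written $\Phi^e\circ F_*^e r$ for some $r$.
\begin{enumerate}
\item Given a Cartier algebra $\ca D$, set $\mf a_e := \{ r \in R : \Phi^e\star r \in \ca D_e\}$. This is an ideal because $\ca D_e$ is an $F_*^eR$-submodule. That in turn follows from $\ca D_e \star \ca D_0 \subseteq \ca D_e$ with $\ca D_0 = R$, where $\ca D_0$ acts by precomposition with multiplication. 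I should double-check that precomposition by $r \in \ca D_0$ gives $F_*^e(r^{p^e}\cdot)$ rather than an arbitrary $F_*^e r$. In fact $\varphi\star r = \varphi\circ F_*^e(r)$ as a map $F_*^eR\to F_*^e R$ in the definition, so this acts by the full $F_*^eR$ after all.
\item Then $\ca D_e = \Phi^e \star \mf a_e = \ca C^R_e\star\mf a_e$, since $\ca C^R_e = \Phi^e\star R$.
\item $\mf a_0 = R$ because $\Phi^0 = \mathrm{id}$ and $\ca D_0 = R$.
\end{enumerate}

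\textbf{Gorenstein case, the $F$-graded axiom and uniqueness.} I expect these to be the main obstacle. To show $\mf a_e^{[p^f]}\mf a_f\subseteq \mf a_{e+f}$, take $a\in\mf a_e$ and $b\in \mf a_f$. The computation above shows
\[
(\Phi^e\star a)\star(\Phi^f\star b) = \Phi^{e+f}\star(a^{p^f}b) \in \ca D_{e+f},
\]
so $a^{p^f}b \in \mf a_{e+f}$. For uniqueness, I need that $\Phi^e\star r = \Phi^e\star s$ forces $r = s$. This follows from $\ca C^R_e\cong F_*^eR$ being free of rank one on $\Phi^e$, i.e.\ $r\mapsto \Phi^e\star r$ is injective. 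Hence if $\ca C^R_e\star\mf a_e = \ca C^R_e\star \mf b_e$, applying the isomorphism gives $F_*^e\mf a_e = F_*^e\mf b_e$, so $\mf a_e = \mf b_e$. The key points to verify carefully are this injectivity and the identity $\ca C^R_e\star \mf a_e = \Phi^e\star\mf a_e$, which uses that $\mf a_e$ is an ideal.
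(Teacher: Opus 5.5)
Your proof is correct. For the first part, the computation $(\phi\star a)\star(\psi\star b)=(\phi\star\psi)\star(a^{p^f}b)$ combined with the $F$-graded axiom does the job. In the Gorenstein case, you define $\mf a_e=\{r \mid \Phi^e\star r\in\ca D_e\}$ and use that $\Phi^e$ freely generates $\ca C^R_e$ (a generator of a module isomorphic to $F^e_*R$ is automatically a basis, so $r\mapsto\Phi^e\star r$ is injective); this gives both the $F$-graded property and uniqueness.

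The paper does not prove this lemma itself; it cites \cite[Lemma~4.9]{Blickle+etal.12}, whose argument is essentially the same direct verification. The only small omission is that the $F$-graded axiom is stated only for $e,f\geq 1$, but the degree-zero cases are immediate since $\mf a_0=R$ and each $\mf a_e$ is an ideal.
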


In light of this lemma, we have in fact seen examples two and three of \Cref{zb:basic-three} before: the system $\mf b_\bullet$ appeared in \Cref{zb:fedder-cartier-algebra} as the $F$-graded system defining a Cartier algebra on the regular local ring $R$ which is the lift of the full Cartier algebra on $R/J$, and the system $\mf c_\bullet$ appeared in \Cref{zb:roundup-cartier-algebra} as the $F$-graded system defining the Cartier algebra for the pair $(R,J^t)$.

An interesting special case of $F$-graded systems are $p$-families of ideals, which were introduced by Hern\'andez and Jeffries for a different purpose: 
\begin{defn}[{\cite[Def~5.1]{Hernandez+Jeffries.18}}]
\label{def:p-family}
A \emph{$p$-family of ideals} is a sequence of ideals~$\mf b_\bullet$ such that~$\mf b_e^{[p]}\subseteq \mf b_{e+1}$ for all~$e$.
\end{defn}
Note that $p$-families are indeed $F$-graded; iterating the definition shows $\mf b_e^{[p^f]}\subseteq \mf b_{e+f}$ for any $f\geq 1$, and so
\[
\mf b_e^{[p^f]}\mf b_f \subseteq \mf b_{e+f}I_f \subseteq \mf b_{e+f}.
\]

\begin{zb}[{\cite[Ex.~5.4--5.7]{Hernandez+Jeffries.18}}] The following systems are all examples of $p$-families:
\begin{itemize}
\item the classic example of $I^{[p^\bullet]}$.

\item (Ex.~5.4) the sequence of Cartier contractions $\cc{e}{J}$ of an ideal under the full Cartier algebra (see \Cref{def:cartier-contraction}) also gives a $p$-family. In the setting where $T$ is an $F$-finite regular local ring, $R=T/I$, and $I\subseteq J$ are ideals of $R$, these are of the form $\cc{e}{J/I} = \left(J^{[p^e]}:_T(I^{[p^e]}:_TI)\right)/I$, see \cite[Thm.~4.6]{Brosowsky.23}.

\item (Ex.~5.5) If $I_\bullet$ is a graded family of ideals (in the typical sense), then $\mf a_e := I_{p^e}$ is a $p$-family.

As a variant of this example, if $I_\bullet$ is a graded family of ideals (in the typical sense), then $\mf a_e := I_{p^e-1}$ is an $F$-graded system.

\item (Ex.~5.6) $p$-families are preserved under arbitrary termwise product, sum, and intersection; by expansion and contraction to or from another ring; and by termwise saturation with respect to a fixed ideal.

\item (Ex.~5.7) Fix $t\in \R_{>0}$ and $f\in R$, and let $\mf a_\bullet$ be a $p$-family. Then $\mf b_e := \mf a_e : f^{\lceil tp^{e}\rceil - 1}$ is also a $p$-family.
\end{itemize}
\end{zb}

\subsection{New systems from old}

We can also modify existing $F$-graded systems to get new ones. In this subsection, we observe some basic operations on $F$-graded systems as a prelude to our more detailed study of the operation of ``$p$-stabilization'' in \Cref{sec:p-stabilization}.

First, we see that in a polynomial ring, the termwise operation of taking initial ideals with respect to a fixed monomial order preserves the property of being an $F$-graded system. 
For a polynomial~$r$, we write $\LT_<(r)$ for the initial term (or simply $\LT(r)$ if the monomial order is clear). Similarly, we write $\ini_<(I)$ or simply $\ini( I)$ for the initial term ideal of a given ideal $I$. See \cite{Ene+Herzog.12} for more background.

\begin{prop}
\label{init-ideal-is-F-graded}
Let $\mf a_\bullet$ be an $F$-graded system in $S=K[x_1,\ldots, x_n]$. Fix a monomial term order. Then the system of initial ideals $\ini \mf a_\bullet$, where
\[
\ini \mf a_e = \langle \LT(r) \: | \: r\in \mf a_e\rangle,
\]
is also $F$-graded. Similarly, if $\mf b_\bullet$ is a $p$-family, then $\ini \mf b_\bullet$ is also a $p$-family.
\end{prop}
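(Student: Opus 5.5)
The argument rests on two elementary facts about initial terms under a fixed monomial order $<$ on $S$. First, initial terms are multiplicative: $\LT(rs)=\LT(r)\LT(s)$ for nonzero $r,s$, since $S$ is a domain and monomial orders respect multiplication. Second, in characteristic $p$ the Frobenius is additive, so $r^{p^f}=\sum c_\alpha^{p^f}x^{p^f\alpha}$. The monomials $x^{p^f\alpha}$ are distinct and keep the same relative order, so $\LT(r^{p^f})=\LT(r)^{p^f}$. We also use that $\ini \mf a_0=\ini S=S$.

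To prove that $\ini\mf a_\bullet$ is $F$-graded, fix $e,f\geq 1$. The ideal $(\ini\mf a_e)^{[p^f]}\ini\mf a_f$ is generated by products $u^{p^f}v$, where $u$ runs over generators of $\ini\mf a_e$ and $v$ over generators of $\ini \mf a_f$. Here we may take $u=\LT(r)$ with $r\in\mf a_e$ and $v=\LT(s)$ with $s\in\mf a_f$; the bracket power of an ideal is generated by $p^f$-th powers of any generating set, as noted in the Background. By the two facts above,
\[
\LT(r)^{p^f}\LT(s)=\LT(r^{p^f})\LT(s)=\LT(r^{p^f}s).
\]
Since $r^{p^f}s\in\mf a_e^{[p^f]}\mf a_f\subseteq\mf a_{e+f}$, this monomial is the initial term of an element of $\mf a_{e+f}$. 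Hence it lies in $\ini\mf a_{e+f}$, which gives the required containment on generators and hence for the ideals.

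For $p$-families the argument is the same with $s=1$. If $r\in\mf b_e$, then $\LT(r)^p=\LT(r^p)$ with $r^p\in\mf b_e^{[p]}\subseteq\mf b_{e+1}$. So every generator of $(\ini\mf b_e)^{[p]}$ lies in $\ini\mf b_{e+1}$.

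The only step that needs care is $\LT(r^{p^f})=\LT(r)^{p^f}$. It uses that Frobenius is additive and injective on coefficients: $c^{p^f}\neq 0$ in the field $K$, so no cancellation occurs. It also uses that $x^\alpha<x^\beta$ implies $x^{p^f\alpha}<x^{p^f\beta}$, which follows from the multiplicativity of the order by iterating $x^{\alpha}x^{\gamma}<x^{\beta}x^{\gamma}$. Everything else is bookkeeping with generators.
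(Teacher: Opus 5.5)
Your proposal is correct and matches the paper's proof: both rest on the identity $\LT(r^{p^f}s)=\LT(r)^{p^f}\LT(s)$ for $r\in\mf a_e$, $s\in\mf a_f$, and obtain the $p$-family case by dropping $s$. Beyond the paper's argument, you also justify $\LT(r^{p^f})=\LT(r)^{p^f}$ (additivity of Frobenius, no cancellation of coefficients, and compatibility of the order with scaling exponents), which the paper only asserts.
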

\begin{proof}
Taking initial terms commutes with products and powers, i.e., $\LT(r^q) = \LT(r)^q$ and $\LT(rs) = \LT(r)\LT(s)$. Thus our desired condition is preserved. More explicitly, if $r\in \mf a_e$ and $s\in \mf a_f$, then
\[
(\LT(r))^{p^f} \LT(s) = \LT(r^{p^f} s) \in \ini (\mf a_{e}^{[p^f]}\mf a_f) \subseteq \ini \mf a_{e+f}.
\]
Removing the $s$'s from the argument gives the $p$-family result.
\end{proof}

Likewise, the termwise operation of taking integral closure also preserves the property of being an $F$-graded system.
\begin{prop}
\label{int-closure-is-F-graded}
Let $\mf a_\bullet$ be an $F$-graded system in some ring $R$. Then the system $\olin{\mf a}_\bullet$ of termwise integral closures $\olin{\mf a}_e = \olin{\mf a_e}$ is also $F$-graded. Similarly, if $\mf b_\bullet$ is a $p$-family, then $\olin b_\bullet$ is also a $p$-family. 
\end{prop}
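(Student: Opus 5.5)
The plan is to reduce both claims to two standard facts about integral closure of ideals: the product rule $\olin{I}\,\olin{J}\subseteq \olin{IJ}$, and the Frobenius rule $\olin{I}^{[p^f]}\subseteq \olin{I^{[p^f]}}$. Once these are in hand, the $F$-graded condition transfers directly. Since $\olin{R}=R$, we have $\olin{\mf a}_0=R$. For $e,f\geq 1$ we then get
\[
\olin{\mf a}_e^{[p^f]}\,\olin{\mf a}_f \subseteq \olin{\mf a_e^{[p^f]}}\;\olin{\mf a_f}\subseteq \olin{\mf a_e^{[p^f]}\mf a_f}\subseteq \olin{\mf a_{e+f}},
\]
where the last containment uses that integral closure is monotone and that $\mf a_e^{[p^f]}\mf a_f\subseteq \mf a_{e+f}$. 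The $p$-family case is the same argument with the factor $\mf a_f$ removed: $\olin{\mf b}_e^{[p]}\subseteq \olin{\mf b_e^{[p]}}\subseteq \olin{\mf b_{e+1}}$.

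For the product rule I would use the standard argument. It suffices to check it on generators of $\olin{I}\,\olin{J}$, namely products $xy$ with $x\in\olin I$ and $y\in \olin J$. Take integral equations $x^n+a_1x^{n-1}+\cdots+a_n=0$ with $a_i\in I^i$, and $y^m+b_1y^{m-1}+\cdots+b_m=0$ with $b_j\in J^j$. Then $R[x,y]$ is a finite module over the Rees-type subring generated in each degree. Concretely, $xy$ satisfies an integral equation over the graded ring $\bigoplus_k (IJ)^k t^k$: use the determinant trick on the finitely generated module $\sum_{i<n,\,j<m} R\, x^iy^j$. Alternatively, one can simply cite the well-known fact, for example from Huneke--Swanson, that $\olin{I}\,\olin{J}\subseteq\olin{IJ}$. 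Since this is standard background, citing it is acceptable.

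For the Frobenius rule, note that $I^{[p^f]}$ is generated by $p^f$-th powers of elements of $I$. So it suffices to show that for $x\in\olin I$ we have $x^{p^f}\in \olin{I^{[p^f]}}$. Raise the integral equation $x^n+\sum a_ix^{n-i}=0$ to the $p^f$-th power; in characteristic $p$ this gives
\[
(x^{p^f})^n+\sum_i a_i^{p^f}(x^{p^f})^{n-i}=0.
\]
Here $a_i^{p^f}\in (I^i)^{[p^f]}=(I^{[p^f]})^i$, the last equality holding because both ideals are generated by the $p^f$-th powers of products of $i$ elements of $I$. Hence $x^{p^f}$ is integral over $I^{[p^f]}$. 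Finally, $\olin{I}^{[p^f]}$ is generated by such $x^{p^f}$, and $\olin{I^{[p^f]}}$ is an ideal, so the containment follows.

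The main obstacle is only the product rule, and I would handle it by citation. Everything else is a direct computation.
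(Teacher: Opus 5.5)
Your proof is correct and follows the same route as the paper: reduce to $\overline{I}\,\overline{J}\subseteq\overline{IJ}$ (cited from Huneke--Swanson) and $\overline{I}^{[p^f]}\subseteq\overline{I^{[p^f]}}$, then chain the containments. The only difference is that you prove the Frobenius containment directly, by raising an equation of integral dependence to the $p^f$-th power. The paper instead obtains it from persistence of integral closure applied to the Frobenius map.
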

\begin{proof}
For any ideal $I$, we have $F^e(\olin I)F^e_*R \subseteq \olin{F^e(I)F^e_*R}$ by persistence of integral closure applied with the Frobenius \cite[Rmk~1.1.3(7)]{Huneke+Swanson.06}, so that $F_*^e\olin I^{[q]} \subseteq \olin{F_*^e I^{[q]}}$. In other words, $\olin I^{[q]} \subseteq \olin{I^{[q]}}$. This gives the $p$-family result, since $\olin{\mf b_e}^{[p]} \subseteq \olin{\mf b_e^{[p]}} \subseteq \olin{\mf b_{e+1}}$.  

It is also true \cite[Rmk~1.3.2(4)]{Huneke+Swanson.06} that $\olin I \cdot \olin J \subseteq \olin{IJ}$. Thus 
\[
\olin{\mf a_e}^{[p^f]}\olin{\mf a_f} \subseteq \olin{\mf a_e^{[p^f]}}\olin{\mf a_f} \subseteq \olin{\mf a_e^{[p^f]}\mf a_f} \subseteq \olin{\mf a_{e+f}}.\qedhere
\]
\end{proof}

Finally, it is illustrative to observe that the short-term behavior of an $F$-graded system may not be representative of the long-term behavior, in the sense that one can ``splice'' a smaller sequence together with a larger one:

\begin{lemma}[Splicing Lemma]
\label{splicing-lemma}
Let $R$ be a commutative ring of prime characteristic $p$ and let $\mf a_\bullet$ and $\mf b_\bullet$ be $F$-graded systems. If there exists an index $E$ such that $\mf a_e \subseteq \mf b_e$ for all $e\leq 2E$, then the system $\mf c_\bullet$ with
\[
\mf c_e =
\begin{cases}
\mf a_e & e\leq E \\
\mf b_e & e > E
\end{cases}
\]
is $F$-graded.
\end{lemma}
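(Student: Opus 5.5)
We check the two conditions of \Cref{def:f-graded} directly for $\mf c_\bullet$. Condition (1) is immediate: $0\leq E$ gives $\mf c_0=\mf a_0=R$. For condition (2), fix $e,f\geq 1$. We must show $\mf c_e^{[p^f]}\mf c_f\subseteq \mf c_{e+f}$. The plan is to compare everything with $\mf b_\bullet$ or $\mf a_\bullet$ according to which pieces are spliced. Throughout we use the monotonicity facts $I\subseteq J \Rightarrow I^{[p^f]}\subseteq J^{[p^f]}$ and $IK\subseteq JK$.

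Case 1: $e+f\leq E$. Then $e,f\leq E$, so all three of $\mf c_e,\mf c_f,\mf c_{e+f}$ are the $\mf a$-pieces. The containment is exactly the $F$-graded property of $\mf a_\bullet$.

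Case 2: $e+f> E$, so $\mf c_{e+f}=\mf b_{e+f}$. Here we aim to show $\mf c_e\subseteq \mf b_e$ and $\mf c_f\subseteq \mf b_f$, since then
\[
\mf c_e^{[p^f]}\mf c_f\subseteq \mf b_e^{[p^f]}\mf b_f\subseteq \mf b_{e+f}=\mf c_{e+f}
\]
by the $F$-graded property of $\mf b_\bullet$. For any index $n$, either $n>E$, in which case $\mf c_n=\mf b_n$, or $n\leq E\leq 2E$, in which case $\mf c_n=\mf a_n\subseteq \mf b_n$ by hypothesis. So $\mf c_n\subseteq \mf b_n$ for every $n$, and Case 2 follows.

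The only point requiring care is the bookkeeping in the second case, namely confirming that each spliced piece sits inside the corresponding $\mf b$-piece. This holds with the stated hypothesis; in fact only $\mf a_e\subseteq \mf b_e$ for $e\leq E$ is needed, so the bound $2E$ is more than sufficient. There is no genuine obstacle: the argument is a two-case check.
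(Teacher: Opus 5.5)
Your proof is correct. It is organized a little differently from the paper's, and the difference is worth noting.

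The paper splits into cases according to whether each of $e$ and $f$ is $\leq E$ or $>E$. In the case $e,f\leq E$ it uses the grading of $\mf a_\bullet$, namely $\mf a_e^{[p^f]}\mf a_f\subseteq \mf a_{e+f}$. When $e+f>E$ it then needs $\mf a_{e+f}\subseteq \mf b_{e+f}$ with $e+f\leq 2E$. This is the only place the bound $2E$ enters.

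You split instead according to whether $e+f\leq E$. Whenever $\mf c_{e+f}=\mf b_{e+f}$, you first push both factors into $\mf b_\bullet$ using $\mf c_n\subseteq \mf b_n$ for all $n$, and only then apply the grading of $\mf b_\bullet$. This merges the paper's three cases (both indices large, mixed, both small with large sum) into one uniform argument. It also shows that the lemma holds under the weaker hypothesis $\mf a_e\subseteq \mf b_e$ for $e\leq E$. Your remark that the bound $2E$ is more than needed is correct.
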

\begin{proof}
The grading condition clearly holds if $e,f> E$. It is not hard to check that the condition also holds if $e,f\leq E$ because of the containment between $\mf a_\bullet$ and $\mf b_\bullet$:
\[
\mf c_e^{[p^f]}\mf c_f \subseteq \mf a_e^{[p^f]}\mf a_f \subseteq \mf a_{e+f} \subseteq \mf b_{e+f}. 
\]
So suppose $e\leq E$ and $f>E$. Then
\begin{align*}
\mf c_e^{[p^f]}\mf c_f &= \mf a_e^{[p^f]}\mf b_f \subseteq \mf b_e^{[p^f]}\mf b_e \subseteq \mf b_{e+f} = \mf c_{e+f} \\
\mf c_f^{[p^e]} \mf c_e &= \mf b_f^{[p^e]} \mf a_e \subseteq \mf b_f^{[p^e]}\mf b_e \subseteq \mf b_{e+f} = \mf c_{e+f}. \qedhere
\end{align*}
\end{proof}


\section{\texorpdfstring{Detecting $F$-singularities of an $F$-graded system}{Detecting F-singularities of an F-graded system}}
\label{sec:singularities-f-graded}

We can now meaningfully define $F$-singularities for an $F$-graded system.
\begin{defn}
\label{def:f-singularities-f-graded}
Let $\mf a_\bullet$ be an $F$-graded system, and let $\sysalg{\mf a_\bullet} = \bigoplus_{e\geq 0} \ca C^R_e\star \mf a_e$ be the corresponding Cartier algebra.
Then $\mf a_\bullet$ is \emph{Frobenius split} if~$(R,\sysalg{\mf a_\bullet})$ is Frobenius split. 
Likewise, $\mf a_\bullet$ is \emph{strongly $F$-regular} if~$(R,\sysalg{\mf a_\bullet})$ is strongly $F$-regular, and $\mf a_\bullet$ is \emph{eventually $F$-split along} element $c$ if $(R,\sysalg{\mf a_\bullet})$ is eventually $F$-split along $c$.
\end{defn}

\begin{rmk}
Recalling \Cref{def:cartier-ver-f-sing}, we can also restate the above definition even more explicitly as follows when $\mf a_\bullet$ is an $F$-graded system on a local ring $R$. 
\begin{itemize}
\item The system $\mf a_\bullet$ is Frobenius split if there exists some $e>0$ and some $\varphi \in \hom_R(F_*^eR, R)$ and $a\in \mf a_e$ with $\varphi(F_*^ea)=1$.

\item The system $\mf a_\bullet$ is strongly $F$-regular if for every $c$ not in any minimal prime of $R$, there exists some $e>0$ and some $\varphi \in \hom_R(F_*^eR,R)$ and $a\in \mf a_e$ with $\varphi(F_*^e(ac))=1$.
\end{itemize}
In particular, the local setting means it suffices to check only maps of the form $\varphi\star a$, rather than sums of such maps.
\end{rmk}

It can be difficult to verify that a given Cartier algebra is strongly $F$-regular, especially if one does not have an explicit ``test element'' as in \cite[Thm.~3.3]{Hochster+Huneke.89a}, and the same difficulty holds for $F$-graded systems. 
However, we shall see that for $p$-families, strong $F$-regularity and $F$-splitting collapse into the same condition. 
More specifically, our main result of this section is the following:
\begin{thm}
\label{p-family-f-split-iff-sfr}
Let $(R,\mf m)$ be a Gorenstein and strongly $F$-regular $F$-finite
local ring. Let $\mf b_\bullet$ be a $p$-family in $R$. Then $\mf b_\bullet$ is $F$-split if and only if it is strongly $F$-regular.
\end{thm}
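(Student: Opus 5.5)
The direction ``strongly $F$-regular $\Rightarrow$ $F$-split'' is immediate: take $c=1$ in the local reformulation of \Cref{def:f-singularities-f-graded} given in the remark above. All the work is in the converse. The plan is to combine a single splitting witnessing that $\mf b_\bullet$ is $F$-split with a splitting witnessing strong $F$-regularity of $R$ along a given $c$. The $p$-family condition is what lets the combined element land in the correct degree piece.

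\textbf{Step 1: fix a splitting for the system.} Assume $\mf b_\bullet$ is $F$-split. By the remark, there are $e>0$, $\varphi\in\hom_R(F_*^eR,R)$ and $a\in\mf b_e$ with
\[
\varphi(F_*^e a)=1.
\]

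\textbf{Step 2: fix a splitting of $R$ along $c$.} Let $c$ be an element not in any minimal prime of $R$. Since $R$ itself is strongly $F$-regular, there are $f>0$ and $\psi\in\hom_R(F_*^fR,R)$ with
\[
\psi(F_*^f c)=1.
\]

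\textbf{Step 3: compose, with $\psi$ acting first.} Consider $\varphi\star\psi=\varphi\circ F_*^e\psi\in\ca C^R_{e+f}$ applied to the element $a^{p^f}c$. Using $R$-linearity of $\psi$, we get $\psi(F_*^f(a^{p^f}c))=a\,\psi(F_*^fc)=a$. Therefore
\[
(\varphi\star\psi)(F_*^{e+f}(a^{p^f}c))=\varphi(F_*^e a)=1 .
\]
It remains to see that $a^{p^f}$ lies in the degree $e+f$ piece. Iterating the $p$-family condition $\mf b_n^{[p]}\subseteq\mf b_{n+1}$ a total of $f$ times gives
\[
a^{p^f}\in\mf b_e^{[p^f]}\subseteq\mf b_{e+f}.
\]
Hence $(\varphi\star\psi)\star a^{p^f}$ lies in $\sysalg{\mf b_\bullet}_{e+f}$ and sends $F_*^{e+f}c$ to $1$. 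So $\mf b_\bullet$ is eventually $F$-split along $c$. Since $c$ was arbitrary, $\mf b_\bullet$ is strongly $F$-regular.

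The only delicate point is the order of composition. If the splitting $\varphi$ from the system were applied first, the element involved would be a product like $a c^{p^e}$, and $c^{p^e}$ is not available in the system. Applying the ring's own splitting $\psi$ first is what produces a Frobenius power $a^{p^f}$ of $a$, and the $p$-family condition (rather than the weaker $F$-graded condition, which would require an extra factor from $\mf b_f$) is exactly what places $a^{p^f}$ in $\mf b_{e+f}$.

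This argument uses neither the Gorenstein hypothesis nor \Cref{cc-properties}. I expect the Gorenstein assumption to matter only for identifying Cartier algebras with $F$-graded systems, not for this particular statement.
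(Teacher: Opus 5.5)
Your argument is correct, and it takes a more direct route than the paper's. The paper's proof is ideal-theoretic:
\begin{enumerate}
\item It uses \Cref{gb-simplified} to pick $c\in\mf b_e\setminus\cc{e}{\mf m}$.
\item It uses the $p$-family condition and \Cref{radical-test-element} to see that this single element is a test element.
\item It combines $\bigcap_f\cc{f}{\mf m}=0$ with the contraction rules of \Cref{cc-properties} to find $f$ with $c\notin\cc{e+f}{\mf m}:c^{p^f}\supseteq\cc{e+f}{\mf m}:\mf b_{e+f}$, so the system is eventually split along its own test element.
\end{enumerate}
Both \Cref{gb-simplified} (through \Cref{gb-F-graded}) and \Cref{cc-properties} rely on $\ca C^R_e$ being cyclic with $\ca C^R_e\star\ca C^R_f=\ca C^R_{e+f}$; this is where the paper's proof uses the Gorenstein hypothesis.

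You instead handle every $c$ at once by forming $\phi\star\psi$, where $\phi$ is the system's splitting and $\psi$ splits $R$ along $c$ (so $\psi$ is applied first). The structural fact behind this is that for a $p$-family $(\varphi\star a)\star\psi=\varphi\star\psi\star a^{p^f}$. Hence $\sysalg{\mf b_\bullet}_e\star\ca C^R_f\subseteq\sysalg{\mf b_\bullet}_{e+f}$ for $e>0$, and any splitting $\phi\in\sysalg{\mf b_\bullet}_e$ gives $\phi\star\psi\in\sysalg{\mf b_\bullet}_{e+f}$ with $(\phi\star\psi)(F_*^{e+f}c)=1$.

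Your route avoids test elements and Cartier contractions entirely. It proves the statement without the Gorenstein hypothesis, and, using finite sums $\sum_i\varphi_i\star a_i$ in place of a single $\varphi\star a$, without locality either. The paper's route instead produces a specific test element and a witness in the colon-ideal language $\cc{e}{\mf m}:\mf b_e$. That is the framework it reuses in the proof of \Cref{a-SFR-iff-b-SFR}, where the Gorenstein assumption is still needed through \Cref{gb-simplified}.
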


Before proving this theorem we will need some useful tools for detecting strong $F$-regularity of an $F$-graded system.
The first is a variant of Fedder's criterion for $F$-graded systems, which relates the Cartier contractions with respect to $\mf a_\bullet$ to the Cartier contractions with respect to the ring (see \Cref{def:cartier-contraction}):

\begin{lemma}[{C.f. \cite[Lemma~2.2]{Glassbrenner.96} and \cite[Lemma~4.12]{Blickle+etal.12}}]
\label{gb-F-graded}
Let $(R,\mf m)$ be an $F$-finite Gorenstein local ring, let $\mf a_\bullet$ be an $F$-graded system, and let $\sysalg{\mf a_\bullet}:=\bigoplus_e \ca C_e^R\star \mf a_e$ be the corresponding Cartier algebra. Then for any ideal $I$ and any $e>0$,
\[
\cc{\sysalg{\mf a_\bullet}_e}{I} = \cc{e}{I}:\mf a_e.
\]
In particular, for any fixed $c\in R$,  there is some $\psi \in \sysalg{\mf a_\bullet}_e$ such that $\psi(F_*^ec)=1$ if and only if $c\notin \cc{e}{\mf m}:\mf a_e$.
\end{lemma}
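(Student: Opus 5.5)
The plan is to unwind both sides directly from the definitions. Since $R$ is Gorenstein local, \Cref{gor-cyclic} says $\ca C^R_e$ is generated as an $F_*^eR$-module by $\Phi^e$. Hence every element of $\sysalg{\mf a_\bullet}_e=\ca C^R_e\star \mf a_e$ is a finite sum $\sum_i \varphi_i\circ F_*^e a_i$ with $\varphi_i\in\ca C^R_e$ and $a_i\in\mf a_e$. Each summand $\varphi_i\circ F_*^e a_i = \Phi^e\star(s_ia_i)$ for some $s_i\in R$. Since $\mf a_e$ is an ideal, the whole sum is then $\Phi^e\star a$ with $a=\sum_i s_ia_i\in\mf a_e$. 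Conversely, every $\varphi\star a$ with $\varphi\in\ca C^R_e$, $a\in\mf a_e$ lies in $\sysalg{\mf a_\bullet}_e$. So I will use the description
\[
\sysalg{\mf a_\bullet}_e=\{\varphi\star a \mid \varphi\in\ca C^R_e,\ a\in \mf a_e\},
\]
or even just $\{\Phi^e\star a \mid a\in\mf a_e\}$. Strictly, this sum-collapsing is not even needed for the containment argument, because the contraction condition is linear in the map.

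For the first claim, I compute $\cc{\sysalg{\mf a_\bullet}_e}{I}$. An element $r$ lies in it exactly when $\varphi(F_*^e(ar))\in I$ for all $\varphi\in\ca C^R_e$ and all $a\in\mf a_e$. Allowing sums of such maps changes nothing, since $I$ is closed under addition. This condition says precisely that $ar\in\cc{e}{I}$ for every $a\in\mf a_e$, that is, $r\mf a_e\subseteq \cc{e}{I}$, which is $r\in \cc{e}{I}:\mf a_e$. This gives the equality, and it is essentially a one-line definitional check, mirroring the second part of \Cref{cc-properties}.

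For the ``in particular'' statement, take $I=\mf m$. If some $\psi\in\sysalg{\mf a_\bullet}_e$ has $\psi(F_*^ec)=1\notin\mf m$, then $c\notin\cc{\sysalg{\mf a_\bullet}_e}{\mf m}=\cc{e}{\mf m}:\mf a_e$. Conversely, suppose $c\notin \cc{e}{\mf m}:\mf a_e$. Then there is some $\psi\in\sysalg{\mf a_\bullet}_e$ with $\psi(F_*^ec)=u\notin\mf m$, so $u$ is a unit because $R$ is local. Then $u^{-1}\psi$ also lies in $\sysalg{\mf a_\bullet}_e$, since this set is an $R$-submodule of $\hom_R(F_*^eR,R)$ (it is closed under left multiplication by $\ca D_0=R$), and $u^{-1}\psi(F_*^ec)=1$.

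There is no real obstacle. The only points needing care are confirming that $\sysalg{\mf a_\bullet}_e$ is closed under multiplication by $R$, which is clear since $R\star\ca C^R_e\subseteq\ca C^R_e$, and that the contraction condition over sums of maps reduces to the condition on the generators $\varphi\star a$.
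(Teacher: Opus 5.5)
Your proof is correct and is essentially the paper's argument. Both unwind the definition of the Cartier contraction using that $\sysalg{\mf a_\bullet}_e$ is spanned by the maps $\varphi\star a$ with $a\in\mf a_e$, which reduces the condition to $ar\in\cc{e}{I}$ for all $a\in\mf a_e$. Your observations that the generator $\Phi^e$ (and hence the Gorenstein hypothesis) is not needed for the equality, and that one must rescale by the unit $u^{-1}$ in the ``in particular'' statement, fill in details the paper leaves implicit.
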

\begin{proof}
Since every map $\psi \in \sysalg{\mf a_\bullet}_e$ is of the form $\psi = \Phi^e \star a$ for $a\in \mf a_e$ and $\Phi^e$ the generating map, this means $\psi(F_*^ec) \in I$ for all such $\psi$ if and only if 
\[
(\Phi^e\star a)(F_*^ec) = \Phi^e(F_*^eac) = (\Phi^e\star c)(F_*^ea) \in I
\]
for all $a\in \mf a_e$. In other words, this occurs exactly when
\[
\Phi^e(F_*^e(\mf a_e c)) = (\Phi^e \star c) (F_*^e \mf a_e) \subseteq I
\]
which by the definition of the $e^{\textrm{th}}$ Cartier contraction means $c\in \cc{e}{I}:\mf a_e$ as desired.
\end{proof}

In order to put \Cref{gb-F-graded} to good use, we'll next observe that the idea of using a ``test element'' for strong $F$-regularity also works in the setting of Cartier algebras. 
This is effectively the same argument as in the setting of \cite[Thm.~3.3]{Hochster+Huneke.89a}, 
but we include the proof here for completeness.

\begin{prop}[{C.f.~\cite[Thm.~3.3]{Hochster+Huneke.89a}}]
\label{test-element-pairs}
Let $R$ be a Noetherian $F$-finite ring, and $\ca D$ a Cartier algebra on $R$. Let $s\in R$ be a non-zero divisor such that $(R[s\inv], \ca D[s\inv])$ is strongly $F$-regular. Then $(R,\ca D)$ is strongly $F$-regular if and only if $(R,\ca D)$ is eventually $F$-split along $s$ (i.e., there exists some $e$ and some $\varphi\in \ca D_e$ with $\varphi(F_*^e s) = 1$).
\end{prop}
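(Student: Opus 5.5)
The forward direction is immediate: if $(R,\ca D)$ is strongly $F$-regular, it is eventually $F$-split along every element outside the minimal primes. A non-zero divisor lies in no minimal prime (minimal primes consist of zero divisors), so in particular $(R,\ca D)$ is eventually $F$-split along $s$. The content is therefore the converse, which I will prove by mimicking Hochster--Huneke's test element argument.

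Fix $c$ not in any minimal prime of $R$; we want $e>0$ and $\phi\in\ca D_e$ with $\phi(F_*^e c)=1$. Since $c$ is not in any minimal prime of $R[s\inv]$, strong $F$-regularity of the localized pair gives some $e'$ and $\psi'\in \ca D[s\inv]_{e'}$ with $\psi'(F_*^{e'}c)=1$. Here I will use the description of $\ca D[s\inv]_{e'}$ as maps $\psi/s^n$ with $\psi\in\ca D_{e'}$, which is valid because $F_*^{e'}R$ is finitely presented ($R$ is $F$-finite), so $\hom$ commutes with localization. Thus $\psi(F_*^{e'}c)/s^n=1$ in $R[s\inv]$. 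Since $s$ is a non-zero divisor, $R\to R[s\inv]$ is injective, and hence $\psi(F_*^{e'}c)=s^n$ in $R$. Replacing $\psi$ by $\psi\star s^{(p^{e'}-1)m}$-type premultiplications is not needed; instead I will absorb the power of $s$ as follows.

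From the hypothesis there is $\varphi\in\ca D_{e}$ with $\varphi(F_*^e s)=1$. I want to raise this to kill $s^n$. Note $\varphi\star\varphi(F_*^{2e}s^{1+p^e}) = \varphi(F_*^e(s\,\varphi(F_*^e s)))=1$ by $R$-linearity ($s^{p^e}$ pulls out as $s$). Iterating, for every $k$ there is $\varphi_k=\varphi^{\star k}\in\ca D_{ke}$ with $\varphi_k(F_*^{ke}s^{1+p^e+\cdots+p^{(k-1)e}})=1$. Since multiplying the input by elements only keeps us able to reach $1$ after premultiplication (namely $\varphi_k\star s^{N-n_k}$ still lies in $\ca D$? — not necessarily, as $\ca D$ need not contain premultiplication by ring elements). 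So instead I choose $k$ with $1+p^e+\dots+p^{(k-1)e}\ge n$ and write $s^{N_k}=s^n\cdot s^{N_k-n}$. Then consider $\varphi_k\star(\text{?})$; the cleaner route is to apply the composition $\varphi_k\star\psi$, which lies in $\ca D$ since $\ca D$ is a graded subring. Computing, $(\varphi_k\star\psi)(F_*^{ke+e'}(c\,x^{p^{e'}}))=\varphi_k(F_*^{ke}(x\,s^n))$ for any $x$; this requires premultiplying $c$ by $x^{p^{e'}}$, again not available in $\ca D$. The fix is to start instead from $c s^{m}$: I expect the main obstacle is exactly that $\ca D$ need not be closed under premultiplication by $R$, and the resolution is to use the $R$-module structure on the \emph{left}: $\ca D_e$ is an $R$-module via $r\cdot\phi=\phi(F_*^e(r^{p^e}\,\cdot))$ since $\ca D_0=R$ and $r\star\phi=r\phi$. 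So left-multiplying by $s^{N_k-n}$ is allowed: $\varphi_k\star(s^{N_k-n}\psi)\in\ca D$, and it sends $F_*^{ke+e'}c$ to $\varphi_k(F_*^{ke}(s^{N_k-n}s^n))=1$. This gives eventual $F$-splitting along $c$, completing the proof.
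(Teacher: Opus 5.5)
Your final argument is correct and is essentially the paper's Hochster--Huneke-style proof. You localize to get $\psi(F_*^{e'}c)=s^n$ with $\psi\in\ca D_{e'}$, raise the power of $s$ by left multiplication, and compose with a map of $\ca D$ sending that power of $s$ to $1$; the only difference is that you use the iterates $\varphi^{\star k}$ (which send $F_*^{ke}s^{1+p^e+\cdots+p^{(k-1)e}}$ to $1$), where the paper uses an auxiliary splitting $\pi$ followed by $\varphi$.

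Your exploratory worry about premultiplication is mistaken, though harmless since your final argument does not use it. Because $\ca D_0=R$ and $\ca D$ is a graded subring, $\phi\star r=\phi\circ F_*^e r\in\ca D_e\star\ca D_0\subseteq\ca D_e$, so $\ca D_e$ is closed under premultiplication. This closure is exactly what gives the paper's splitting $\pi=\varphi\star s$. You should also remove the false starts from the write-up.
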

\begin{proof}
The forward implication is clear by definition.

For the converse, take any non-zerodivisor $c\in R$, and consider the ``evaluation at $c$'' map $\ca D_f \to R$ which has $\phi\mapsto \phi(F_*^f c)$. Since $(R[s\inv], \ca D[s\inv])$ is strongly $F$-regular, when we tensor with $R[s\inv]$ there is some large enough degree $f$ such that this map is surjective in the localization, i.e., there is some $\psi\in \ca D_f$ and some $m$ such that $\psi(F_*^f c) = s^m$. Because $(R,\ca D)$ is eventually $F$-split along $s$, in particular $(R,\ca D)$ is $F$-split, and so there exists $\pi \in \ca D_g$ with $\pi(F_*^g 1)=1$. We can always replace  $m$ by a larger $m$, and replace $g$ by a multiple of $g$, so we reduce to the case where
\[
\psi\in \ca D_f,\ \psi(F_*^f c) = s^{p^g} 
\quad\textrm{and}\quad
\pi \in \ca D_g,\ \pi(F_*^g 1)=1.
\]
Now 
\[
\pi \star \psi(F_*^{f+g} c) = \pi(F_*^g s^{p^g}) = s \pi(F_*^g 1) = s.
\]
Let $\varphi \in \ca D_e$ be our given splitting of $s$. Now finally $\varphi\star\pi\star \psi$ is our desired splitting of $c$.
\end{proof}

In particular, it will be useful for us to have a ready-made collection of elements with which to test strong $F$-regularity, which is what the following corollary provides.

\begin{cor}
\label{radical-test-element}
Let $\mf a_\bullet$ be an $F$-graded system in a strongly $F$-regular $F$-finite local ring $R$.
Let $s$ be a non-zero element of $R$ such that $s\in \sqrt{\mf a_e}$ for all $e\gg 0$. 
Then $\mf a_\bullet$ is strongly $F$-regular if and only if $\mf a_\bullet$ is eventually $F$-split along $s$. In particular, for any non-zero $s\in \mf a_1$, we have that $\mf a_\bullet$ is strongly $F$-regular if and only if $\mf a_\bullet$ is eventually $F$-split along $s$.
\end{cor}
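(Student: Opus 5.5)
The plan is to apply \Cref{test-element-pairs} with $\ca D = \sysalg{\mf a_\bullet}$ and the given element $s$. Since $R$ is strongly $F$-regular, it is a normal local domain, so every non-zero $s$ is a non-zerodivisor. The only hypothesis left to check is that $(R[s\inv], \sysalg{\mf a_\bullet}[s\inv])$ is strongly $F$-regular. Once that holds, \Cref{test-element-pairs} gives the equivalence directly. The final sentence then follows by taking $s\in\mf a_1$. By \Cref{minl-sys-containment}, $\mf a_e\supseteq \prod_{i=0}^{e-1}\mf a_1^{[p^i]}$, which contains $s^{1+p+\cdots+p^{e-1}}$. Hence $s\in\sqrt{\mf a_e}$ for every $e\geq 1$.

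To check the localized hypothesis, fix $N$ with $s\in\sqrt{\mf a_e}$ for all $e\geq N$. Then each $\mf a_e R[s\inv]$ with $e\ge N$ contains a power of the unit $s$, so $\mf a_e R[s\inv]=R[s\inv]$. Now take a non-zero $c\in R[s\inv]$; we may clear denominators and assume $c\in R$. Because $R$ is strongly $F$-regular, so is $R[s\inv]$. Its full Cartier algebra therefore contains a map in some degree $e$ sending $F_*^e c$ to $1$.

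The degree $e$ given by strong $F$-regularity of $R[s\inv]$ may be smaller than $N$, and this is the main point needing care. To handle it, I will enlarge the degree. Compose the splitting of $c$ with a Frobenius splitting $\pi$ of $R[s\inv]$ in a large degree $g$ so that $e+g\geq N$. The composite $\pi\star\varphi$ lies in the full Cartier algebra in degree $e+g$ and still sends $F_*^{e+g}c$ to $1$.

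In degree $e+g\ge N$, the localized Cartier algebra $\sysalg{\mf a_\bullet}[s\inv]_{e+g}=\ca C^{R}_{e+g}[s\inv]\star \mf a_{e+g}R[s\inv]$ is the full Cartier algebra of $R[s\inv]$ in that degree. This uses that $\hom$ commutes with localization for $F$-finite $R$. So $\pi\star\varphi$ lies in $\sysalg{\mf a_\bullet}[s\inv]$, and the localized pair is eventually $F$-split along $c$. Since $c$ was arbitrary, the localized pair is strongly $F$-regular. The remaining steps are routine bookkeeping once the degree is pushed past $N$.
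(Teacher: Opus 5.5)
Your proposal is correct and follows essentially the same route as the paper. Both arguments reduce to \Cref{test-element-pairs} and verify that the localized pair is strongly $F$-regular by composing a splitting of $c$ with a Frobenius splitting, which pushes the degree past the point where $s\in\sqrt{\mf a_e}$. The only difference is packaging: the paper stays in $R$ and premultiplies by $s^{np^{e+f}}\in \mf a_{e+f}$ to obtain a map in $\sysalg{\mf a_\bullet}_{e+f}$ sending $F_*^{e+f}c$ to $s^n$, which becomes a unit after localizing, whereas you localize first and observe that $\mf a_{e+g}R[s\inv]=R[s\inv]$.
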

\begin{proof}
Note that ``non-zero divisor'' has become ``non-zero'' since strongly $F$-regular local rings are domains.
Let $\sysalg{\mf a_\bullet}_e = \ca C^R_e \star \mf a_e$ be the Cartier algebra associated to $\mf a_\bullet$.
By the previous fact, it suffices to show that $(R[s\inv], \sysalg{\mf a_\bullet}[s\inv])$ is strongly $F$-regular. Consider any element $c\in R$. Since $R$ is strongly $F$-regular, there exists some $e$ and $\varphi\in \ca C^R_e$ such that $\varphi(F_*^ec) = 1$. Now choose degree $f$ and exponent $a$ such that $\psi\in \ca C^R_f$ is an $F$-splitting and $s^a\in \mf a_{e+f}$, and let $n=\left\lceil \frac{a}{p^{e+f}}\right\rceil$. Then
\[
s^n = s^n(\psi \star \varphi)(F_*^{e+f} c) = \psi \star\varphi\star s^{np^{e+f}} (F_*^{e+f} c).
\]
By design, $s^{np^{e+f}}\in \mf a_{e+f}$ so $\psi\star \varphi \star s^{np^{e+f}} \in \sysalg{\mf a_\bullet}_{e+f}$. Once we localize, this shows $(R[s\inv], \sysalg{\mf a_\bullet})$ is $(e+f)$-$F$-split along $c/1$, and so by a standard result this means that in fact $(R[s\inv],\sysalg{\mf a_\bullet})$ is eventually $F$-split along any element of the form $c/s^t\in R[s\inv]$.

For the ``in particular,'' note that if $s\in \mf a_1$, then $s^{\sum_{i=0}^{e-1}p^i} \in \mf a_e$, and thus $s\in \sqrt{\mf a_e}$ for all~$e$. 
\end{proof}

Using any element from $\mf a_1$ as our test element combined with our understanding of splittings from \Cref{gb-F-graded}, we get a simplified Fedder/Glassbrenner-type criterion for checking $F$-splitting and strong $F$-regularity of an $F$-graded system in this setting.

\begin{cor}
\label{gb-simplified}
Let $(R,\mf m)$ be a Gorenstein and strongly $F$-regular $F$-finite local ring. Let $\mf a_\bullet$ be an $F$-graded system in $R$ with $\mf a_1\neq 0$.
\begin{itemize}
\item $\mf a_\bullet$ is $F$-split if and only if there exists $e>0$ such that $\mf a_e\not\subseteq \cc{e}{\mf m}$.
\item $\mf a_\bullet$ is strongly $F$-regular if and only if there exists $e>0$ such that $\mf a_1 \mf a_e \not\subseteq \cc{e}{\mf m}$.
\end{itemize}
\end{cor}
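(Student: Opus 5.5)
Both bullets come from \Cref{gb-F-graded}, which translates splittings into non-containments; for the second we also use \Cref{radical-test-element} to reduce strong $F$-regularity to splitting along a single test element. Throughout, $R$ is Gorenstein and $F$-finite local, so \Cref{gb-F-graded} applies. Since $R$ is also strongly $F$-regular, it is a domain.

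For the first bullet, by \Cref{def:f-singularities-f-graded} the system $\mf a_\bullet$ is $F$-split exactly when some $e>0$ and some $\psi\in\sysalg{\mf a_\bullet}_e$ satisfy $\psi(F_*^e 1)=1$. Take $c=1$ in the ``in particular'' of \Cref{gb-F-graded}. Such a $\psi$ exists in degree $e$ if and only if $1\notin \cc{e}{\mf m}:\mf a_e$. This in turn holds if and only if $\mf a_e\not\subseteq \cc{e}{\mf m}$.

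For the second bullet, fix a nonzero $s\in\mf a_1$, which exists since $\mf a_1\neq 0$. By \Cref{radical-test-element}, $\mf a_\bullet$ is strongly $F$-regular if and only if it is eventually $F$-split along $s$. By \Cref{gb-F-graded}, this happens if and only if $s\notin \cc{e}{\mf m}:\mf a_e$ for some $e$, i.e., $s\mf a_e\not\subseteq\cc{e}{\mf m}$.

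The remaining issue, and the only real obstacle, is that this condition depends on the chosen $s$, while the statement uses the whole ideal $\mf a_1$. We argue both directions.
\begin{itemize}
\item If $s\mf a_e\not\subseteq \cc{e}{\mf m}$, then, since $s\in\mf a_1$, we get $\mf a_1\mf a_e\not\subseteq\cc{e}{\mf m}$.
\item Conversely, suppose $\mf a_1\mf a_e\not\subseteq\cc{e}{\mf m}$. The ideal $\mf a_1\mf a_e$ is generated by products $s'a$ with $s'\in\mf a_1$ and $a\in\mf a_e$, so some such product lies outside $\cc{e}{\mf m}$. In particular $s'\neq 0$. Running the same argument with $s'$ in place of $s$ (\Cref{radical-test-element} allows any nonzero element of $\mf a_1$) shows that $\mf a_\bullet$ is eventually $F$-split along $s'$, hence strongly $F$-regular.
\end{itemize}
Phrased directly, $\mf a_\bullet$ is strongly $F$-regular if and only if it is eventually $F$-split along some, equivalently every, nonzero element of $\mf a_1$. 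By \Cref{gb-F-graded}, this is equivalent to $\mf a_1\mf a_e\not\subseteq\cc{e}{\mf m}$ for some $e>0$.
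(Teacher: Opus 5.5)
Your proof is correct and takes the same route the paper indicates. The paper gives no separate proof, only the remark that the corollary follows from \Cref{gb-F-graded} (with $c=1$ for splitting) together with using any nonzero element of $\mf a_1$ as a test element via \Cref{radical-test-element}; that is exactly your argument, and your passage from a fixed $s$ to the ideal $\mf a_1\mf a_e$ fills in the one detail the paper leaves implicit.
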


Now we are ready to prove the main theorem of this section:

\begin{proof}[Proof of \Cref{p-family-f-split-iff-sfr}]
If $\mf b_\bullet$ is strongly $F$-regular, it is by definition also $F$-split. Conversely, suppose $\mf b_\bullet$ is $F$-split. Then by \Cref{gb-simplified}, there exists some $e>0$ and some $c\in \mf b_e\setminus \cc{e}{\mf m}$. 
Since $\mf b_\bullet$ is a $p$-family, $c^{p^f}\in \mf b_e^{[p^f]}\subseteq \mf b_{e+f}$ for all $f\geq 0$. 
By \Cref{radical-test-element}, this $c$ is a test element. 
Further, the ideal $\cc{e}{\mf m}:c$ is proper, and so since $R$ is strongly $F$-regular, 
\[
0=\bigcap_f \cc{f}{\mf m} \supseteq \bigcap_f \cc{f}{\cc{e}{\mf m}:c}.
\]
In particular, there exists some $f$ such that
\[
c\notin \cc{f}{\cc{e}{\mf m}:c} = \cc{e}{\cc{f}{\mf m}}:c^{p^f} = \cc{e+f}{\mf m}:c^{p^f} \supseteq \cc{e+f}{\mf m}:\mf b_{e+f},
\]
where we are making use of the basic properties in \Cref{cc-properties}.
\end{proof}


\section{\texorpdfstring{$p$-Stabilization}{p-Stabilization}}
\label{sec:p-stabilization}


In light of \Cref{p-family-f-split-iff-sfr}, it is natural to consider whether we can use $p$-families to get results for $F$-graded systems more generally. Our goal in this section is thus to present a useful construction which turns $F$-graded systems into $p$-families, in such a way that strong $F$-regularity is preserved.

\begin{defn}
Let $\mf a_\bullet$ be an $F$-graded system. The \emph{$p$-stabilization} of $\mf a_\bullet$ is $\pstab {\mf a}_\bullet$, where
\[
\pstab {\mf a}_e := \left\{ r \: \big| \: r^{p^{f}} \in \mf a_{f+e} \textrm{ for all $f\gg 0$}\right\}.
\]
\end{defn}

From this definition, we immediately get the following result:
\begin{fact}
\label{p-stab-is-p-family}
The $p$-stabilization of any $F$-graded system is a $p$-family.
\end{fact}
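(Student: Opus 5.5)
We need to show two things: each $\pstab{\mf a}_e$ is an ideal, and $\pstab{\mf a}_e^{[p]}\subseteq \pstab{\mf a}_{e+1}$. Both follow from unwinding the definition. The only points needing care are that the thresholds ``$f\gg 0$'' depend on the element, and that bracket powers of a sum must be handled via the Frobenius being additive.

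\emph{Ideal property.} Take $r,s\in\pstab{\mf a}_e$ and choose $N$ so that $r^{p^f},s^{p^f}\in\mf a_{f+e}$ for all $f\geq N$; such an $N$ exists by taking the maximum of the two individual thresholds. Since Frobenius is additive in characteristic $p$, we get $(r+s)^{p^f}=r^{p^f}+s^{p^f}\in\mf a_{f+e}$ for all $f\geq N$. For $x\in R$ and $f\geq N$, we have $(xr)^{p^f}=x^{p^f}r^{p^f}\in\mf a_{f+e}$, because $\mf a_{f+e}$ is an ideal. Finally $0\in\pstab{\mf a}_e$. Hence $\pstab{\mf a}_e$ is an ideal. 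For $e=0$, note that $\mf a_0=R$ but $\pstab{\mf a}_0$ is defined by the same formula, so it is simply another ideal. The $p$-family condition does not require it to equal $R$.

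\emph{$p$-family condition.} Since $\pstab{\mf a}_{e+1}$ is an ideal, it suffices to check generators: for $r\in\pstab{\mf a}_e$ we need $r^p\in\pstab{\mf a}_{e+1}$. If $r^{p^{f}}\in\mf a_{f+e}$ for all $f\geq N$, then for all $g\geq \max(N-1,0)$ we have
\[
(r^p)^{p^{g}}=r^{p^{g+1}}\in\mf a_{(g+1)+e}=\mf a_{g+(e+1)},
\]
so $r^p\in\pstab{\mf a}_{e+1}$. Therefore $\pstab{\mf a}_e^{[p]}=\langle r^p\mid r\in\pstab{\mf a}_e\rangle\subseteq\pstab{\mf a}_{e+1}$.

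None of these steps is a real obstacle. The only subtlety is using a common threshold $N$ for finitely many elements and the additivity of Frobenius. Notably, the $F$-graded hypothesis on $\mf a_\bullet$ is not used here; it becomes relevant only for finer properties, such as comparing $\pstab{\mf a}_\bullet$ with $\mf a_\bullet$.
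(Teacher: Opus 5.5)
Your proposal is correct and follows the paper's proof: the paper likewise leaves the ideal property as a routine check and verifies the $p$-family condition via $(r^p)^{p^f} = r^{p^{f+1}} \in \mf a_{(e+1)+f}$ for $f \gg 0$. You additionally spell out the ideal verification, using a common threshold and the additivity of Frobenius, which the paper omits.
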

\begin{proof}
Once can check $\pstab{\mf a}_e$ is an ideal. For the $p$-family condition, if $r\in \pstab{\mf a}_e$, then for all $f\gg 0$, we have $(r^p)^{p^f} = r^{p^{f+1}} \in \mf a_{(e+1)+f}$.
\end{proof}

Further, strong $F$-regularity is indeed preserved:

\begin{thm}
\label{a-SFR-iff-b-SFR}
Let $(R,\mf m)$ be a Gorenstein and strongly $F$-regular $F$-finite local ring. 
Let \(\mf a_\bullet\) be an $F$-graded system in $R$ with $\mf a_1\neq 0$, and let $\pstab{\mf a}_\bullet$ be the $p$-stabilization of $\mf a_\bullet$. 
Then $\mf a_\bullet$ is strongly $F$-regular if and only if $\pstab{\mf a}_\bullet$ is strongly $F$-regular. 
\end{thm}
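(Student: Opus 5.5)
The plan is to compare the two systems through a single nonzero element $s\in\mf a_1$. I will use \Cref{gb-simplified} and \Cref{p-family-f-split-iff-sfr} for the forward direction, and \Cref{cc-properties} together with the strong $F$-regularity of $R$ for the converse. The first step is the containment
\[
s\,\mf a_e\subseteq \pstab{\mf a}_e \quad\text{for all } e .
\]
To prove it, let $r\in\mf a_e$. By \Cref{minl-sys-containment}, $s^{(p^f-1)/(p-1)}\in\prod_{i<f}\mf a_1^{[p^i]}\subseteq\mf a_f$. Hence, by the $F$-graded condition,
\[
r^{p^f}s^{(p^f-1)/(p-1)}\in\mf a_e^{[p^f]}\mf a_f\subseteq\mf a_{e+f}.
\]
Since $(p^f-1)/(p-1)\le p^f$, it follows that $(rs)^{p^f}\in\mf a_{e+f}$ for every $f\ge 1$, so $rs\in\pstab{\mf a}_e$.

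($\Rightarrow$) Suppose $\mf a_\bullet$ is strongly $F$-regular. Then it is eventually $F$-split along $s$, so there are $e>0$, $a\in\mf a_e$ and $\varphi\in\ca C^R_e$ with $\varphi(F_*^e(as))=1$. By the first step, $as\in\pstab{\mf a}_e$, so $\varphi\star(as)$ lies in $\sysalg{\pstab{\mf a}_\bullet}_e$ and is a Frobenius splitting; thus $\pstab{\mf a}_\bullet$ is $F$-split. By \Cref{p-stab-is-p-family} it is a $p$-family, so \Cref{p-family-f-split-iff-sfr} upgrades this to strong $F$-regularity. I will check that the proof of \Cref{p-family-f-split-iff-sfr} needs only some element $c\in\pstab{\mf a}_e\setminus\cc{e}{\mf m}$, which we have here; it does not use $\pstab{\mf a}_1\ne0$. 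Alternatively, $s\cdot s\in\pstab{\mf a}_1$ is nonzero, so that hypothesis holds anyway.

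($\Leftarrow$) Suppose $\pstab{\mf a}_\bullet$ is strongly $F$-regular. Applied to the nonzero element $s$, this gives $e>0$ and $r\in\pstab{\mf a}_e$ with $rs\notin\cc{e}{\mf m}$. Set $J=\cc{e}{\mf m}:rs$, a proper ideal. By \Cref{cc-properties},
\[
\cc{f}{J}=\cc{e+f}{\mf m}:(rs)^{p^f}.
\]
Because $R$ is strongly $F$-regular, $\bigcap_f\cc{f}{J}\subseteq\bigcap_f\cc{f}{\mf m}=0$, so $s\notin\cc{f}{J}$ for some $f$. That is, $s\cdot r^{p^f}s^{p^f}\notin\cc{e+f}{\mf m}$. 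If moreover $f$ is large enough that $r^{p^f}\in\mf a_{e+f}$, then $r^{p^f}s^{p^f}\in\mf a_{e+f}$. This gives $\mf a_1\mf a_{e+f}\not\subseteq\cc{e+f}{\mf m}$, and \Cref{gb-simplified} shows $\mf a_\bullet$ is strongly $F$-regular.

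The main obstacle is reconciling the two requirements on $f$ in the converse. The intersection argument produces some $f$, but $r^{p^f}\in\mf a_{e+f}$ is only guaranteed for $f\ge f_0$. I would first try to show that $f\mapsto\cc{f}{J}$ is decreasing in the strongly $F$-regular Gorenstein setting, for instance via $\cc{f+1}{J}=\cc{f}{\cc{1}{J}}$ and a comparison $\cc{1}{J}\subseteq J$. If that fails, I would reindex: replace $e$ by $e+f_0$ and $r$ by $r^{p^{f_0}}$, which lies in $\pstab{\mf a}_{e+f_0}$ with $r^{p^{f_0}}\in\mf a_{e+f_0}$ already. I would then rerun the argument with $c=r^{p^{f_0}}s^{p^{f_0}}$, using \Cref{cc-properties} to see that $c\notin\cc{e+f_0}{\mf m}$, provided $f_0$ is first chosen admissibly.
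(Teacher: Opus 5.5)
Your argument is correct, and your first plan resolves the obstacle you flag. If $\pi\in\ca C^R_1$ is a Frobenius splitting (one exists since $R$ is $F$-split) and $x\in\cc{1}{J}$, then $x=(\pi\star x^{p-1})(F_*x)\in J$. Hence $\cc{f+1}{J}=\cc{f}{\cc{1}{J}}\subseteq\cc{f}{J}$, so $f$ can be taken as large as needed and the fallback is unnecessary.

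Your forward direction is the paper's argument, run directly rather than contrapositively: the containment of \Cref{ag-ae-subset-be} followed by \Cref{p-family-f-split-iff-sfr}.

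The converse is where you differ. The paper applies \Cref{gb-simplified} to $\pstab{\mf a}_\bullet$ to get $r\in\pstab{\mf a}_1$ and $s\in\pstab{\mf a}_e$ with $rs\notin\cc{e}{\mf m}$. It pushes this to $(rs)^{p^{e_0}}\notin\cc{e+e_0}{\mf m}$ using the $p$-stability of $\cc{\bullet}{\mf m}$ from \Cref{zb:cc-p-stable}, which is the same splitting trick as above. It then finishes with \Cref{radical-test-element}, taking as test element $r^{p^{e_0}}$, which lies in $\sqrt{\mf a_n}$ for $n\gg 0$.

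You instead keep the test element $s\in\mf a_1$ fixed. You produce the extra factor of $s$ required by the $\mf a_1\mf a_{e+f}$ criterion of \Cref{gb-simplified} by rerunning the intersection argument $\bigcap_f\cc{f}{\mf m}=0$ from the proof of \Cref{p-family-f-split-iff-sfr}. The paper's route is shorter because the radical test element lemma absorbs that step. Yours never needs a test element outside $\mf a_1$, at the cost of repeating that argument and proving that $f\mapsto\cc{f}{J}$ is decreasing.
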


Combining this theorem and \Cref{p-family-f-split-iff-sfr} with the fact that $\pstab{\mf a}_\bullet$ is a $p$-family, the following corollary is immediate.
\begin{cor}
\label{a-SFR-iff-b-Fsplit}
Let $(R,\mf m)$ be a Gorenstein and strongly $F$-regular $F$-finite local ring. 
Let \(\mf a_\bullet\) be an $F$-graded system in $R$ with $\mf a_1\neq 0$, and let $\pstab{\mf a}_\bullet$ be the $p$-stabilization of $\mf a_\bullet$. 
Then $\mf a_\bullet$ is strongly $F$-regular if and only if $\pstab{\mf a}_\bullet$ is $F$-split. 
\end{cor}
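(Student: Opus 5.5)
The plan is to chain together two equivalences already available. First, \Cref{a-SFR-iff-b-SFR} applies verbatim under the present hypotheses: $(R,\mf m)$ is Gorenstein, strongly $F$-regular, $F$-finite and local, and $\mf a_\bullet$ is $F$-graded with $\mf a_1\neq 0$. It gives that $\mf a_\bullet$ is strongly $F$-regular if and only if $\pstab{\mf a}_\bullet$ is strongly $F$-regular.

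Second, I will apply \Cref{p-family-f-split-iff-sfr} to the system $\mf b_\bullet := \pstab{\mf a}_\bullet$. The ring hypotheses are identical to those just used. The only remaining input is that $\pstab{\mf a}_\bullet$ is a $p$-family, which is exactly \Cref{p-stab-is-p-family}. That theorem then says $\pstab{\mf a}_\bullet$ is $F$-split if and only if it is strongly $F$-regular. Composing the two biconditionals yields the statement.

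There is essentially no obstacle. The one point to double-check is that \Cref{p-family-f-split-iff-sfr} needs no nonvanishing hypothesis on the degree-one piece of the $p$-family, unlike \Cref{gb-simplified}. Its proof produces its own test element $c\in\mf b_e\setminus\cc{e}{\mf m}$ from the $F$-splitting, and uses $c^{p^f}\in\mf b_{e+f}$ together with \Cref{radical-test-element}. So no separate verification that $\pstab{\mf a}_1\neq 0$ is needed. If one wanted it anyway, it follows from \Cref{minl-sys-containment}: $\mf a_1^{[p^f]}\subseteq \prod_{i=0}^{f}\mf a_1^{[p^i]}\subseteq\mf a_{f+1}$ for $\mf a_1$ proper, and in the non-proper case $\mf a_1=R$ the same product containment gives $\mf a_e=R$ for all $e$. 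Either way $0\neq\mf a_1\subseteq\pstab{\mf a}_1$.
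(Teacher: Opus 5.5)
Your main argument is correct and matches the paper's proof: the corollary comes from composing \Cref{a-SFR-iff-b-SFR} with \Cref{p-family-f-split-iff-sfr} applied to the $p$-family $\pstab{\mf a}_\bullet$ (\Cref{p-stab-is-p-family}). You are also right that the latter theorem needs no nonvanishing hypothesis on the degree-one piece.

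Your optional final remark is not needed for the proof, but it is false as written.

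\textbf{The containment is reversed.} You claim $\mf a_1^{[p^f]}\subseteq\prod_{i=0}^{f}\mf a_1^{[p^i]}$. In fact a product of ideals is contained in each of its factors, so the containment runs the other way.

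\textbf{The conclusion $\mf a_1\subseteq\pstab{\mf a}_1$ can fail when $\mf a_\bullet$ is not a $p$-family.} Take $\mf a_e=\prod_{i=0}^{e-1}\langle x\rangle^{[p^i]}=\langle x^{1+p+\cdots+p^{e-1}}\rangle$ in $k[x]$. Then $x^{p^f}\notin\mf a_{1+f}$ for every $f\geq 1$. Indeed $\pstab{\mf a}_1=\langle x^2\rangle$ while $\mf a_1=\langle x\rangle$, so $\mf a_1\not\subseteq\pstab{\mf a}_1$. This agrees with \Cref{minl-system-stabilization}, and with the paper's warning that the third part of \Cref{p-stabilization-basic-properties} requires a $p$-family.

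\textbf{A correct way to get $\pstab{\mf a}_1\neq 0$.} If you want this fact, use \Cref{ag-ae-subset-be}. It gives $\mf a_1\cdot\mf a_1\subseteq\pstab{\mf a}_1$, and $\mf a_1^2\neq 0$ because a strongly $F$-regular local ring is a domain.
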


In order to prove \Cref{a-SFR-iff-b-SFR}, the main theorem of this section, we will first need a more in-depth study of $p$-stabilization. Thus the actual proof is postponed until \cpageref{pf:a-SFR-iff-b-SFR}. To start, the following straightforward observation will be useful for examples. 

\begin{lemma}
If $R$ is $\bb N^d$ graded and $\mf a_\bullet$ is a homogeneous $F$-graded system, then so is $\pstab{\mf a}_\bullet$. In particular, if $R$ is a polynomial ring and $\mf a_\bullet$ is a monomial $F$-graded system, then $\pstab{\mf a}_\bullet$ is also monomial.
\end{lemma}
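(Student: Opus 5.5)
We need to show that $\pstab{\mf a}_e$ is a homogeneous ideal whenever each $\mf a_n$ is homogeneous. The plan is to take $r\in\pstab{\mf a}_e$, write $r=\sum_{\delta} r_\delta$ as a sum of homogeneous components indexed by $\delta\in\bb N^d$, and show that each $r_\delta$ lies in $\pstab{\mf a}_e$. We use the Frobenius to spread out the degrees. Since $p^f$-th powering is additive in characteristic $p$,
\[
r^{p^f} = \sum_\delta r_\delta^{p^f},
\]
and $r_\delta^{p^f}$ is homogeneous of degree $p^f\delta$. The degrees $p^f\delta$ are pairwise distinct for distinct $\delta$. Some of the $r_\delta^{p^f}$ may vanish, since $R$ need not be reduced. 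Even so, the nonzero ones are exactly the homogeneous components of $r^{p^f}$ in their respective degrees.

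Now fix $f\gg0$ so that $r^{p^f}\in\mf a_{f+e}$. Because $\mf a_{f+e}$ is homogeneous, every homogeneous component of $r^{p^f}$ lies in $\mf a_{f+e}$. The component of degree $p^f\delta$ is precisely $r_\delta^{p^f}$, and if that term is zero it lies in the ideal trivially. Hence $r_\delta^{p^f}\in\mf a_{f+e}$ for every $\delta$ and all $f\gg0$. The threshold for $f$ is the one for $r$ itself, so it is uniform in $\delta$. By definition, this gives $r_\delta\in\pstab{\mf a}_e$, so $\pstab{\mf a}_e$ is homogeneous.

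The only step that needs care is matching components: we need $r_\delta^{p^f}$ to be the full degree-$p^f\delta$ part of $r^{p^f}$, with no cancellation from other terms. This holds because $\delta\mapsto p^f\delta$ is injective on $\bb N^d$, and because the grading is multiplicative, which puts $r_\delta^{p^f}$ in degree $p^f\delta$.

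For the monomial statement, take the polynomial ring with its fine $\bb N^d$-grading. Here the homogeneous ideals are exactly the monomial ideals, since each graded piece is spanned by a single monomial. So if $\mf a_\bullet$ is monomial, it is homogeneous for this grading, hence $\pstab{\mf a}_\bullet$ is homogeneous, hence monomial.
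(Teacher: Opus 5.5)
Your proposal is correct and follows essentially the same argument as the paper. You decompose $r$ into graded pieces, use $r^{p^f}=\sum_\delta r_\delta^{p^f}$, and observe that $r_\delta^{p^f}$ is exactly the degree-$p^f\delta$ component of $r^{p^f}$, so it lies in the homogeneous ideal $\mf a_{e+f}$. Your extra remarks on possibly vanishing components and on the fine grading for the monomial case only spell out details the paper leaves implicit.
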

\begin{proof}
Write $r= \sum_\alpha r_\alpha\in R$ as a decomposition into graded pieces.
If $r^{p^f}=\sum_\alpha r_\alpha^{p^f}\in \mf a_{e+f}$, then the degree $p^f\alpha$ piece of $r^{p^f}$ (which is $r_\alpha^{p^f}$) is also in $\mf a_{e+f}$ as desired.
\end{proof}

Monomial $F$-graded systems will be analyzed more in \Cref{sec:shapes}. Now we proceed to some further general properties:

\begin{prop}[Basic properties of $p$-stabilization]
\label{p-stabilization-basic-properties}
Let $R$ be a Noetherian $F$-finite ring, and let $\mf a_\bullet$ be an $F$-graded system of $R$.
\begin{enumerate}
\item $\pstab{\pstab{\mf a}}_e = \pstab{\mf a}_e$ for all~$e$, i.e., any $p$-stabilized system is itself \emph{$p$-stable}.

\item If further $\mf b_\bullet$ is an $F$-graded system with $\mf a_e \subseteq \mf b_e$ for all $e\gg 0$, then $\pstab{\mf a}_e \subseteq \pstab{\mf b}_e$ for all~$e$.

\item If $\mf a_\bullet$ is a $p$-family, then $\mf a_e \subseteq \pstab{\mf a}_e$ for all $e$.
\end{enumerate}
Thus $p$-stabilization behaves like a ``closure'' operation on $p$-families.
\end{prop}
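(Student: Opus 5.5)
All three parts come straight from the definition of $\pstab{\mf a}_e$ as the set of $r$ with $r^{p^f}\in \mf a_{e+f}$ for all $f\gg 0$. In each part I would argue elementwise and use only the definition, plus the $p$-family property where it is available.

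For (2), take $r\in\pstab{\mf a}_e$. There is $f_0$ with $r^{p^f}\in\mf a_{e+f}$ for all $f\geq f_0$. Choose $N$ with $\mf a_n\subseteq\mf b_n$ for all $n\geq N$. Then for $f\geq\max(f_0,N)$ we have $e+f\geq N$, so
\[
r^{p^f}\in\mf a_{e+f}\subseteq\mf b_{e+f}.
\]
Hence $r\in\pstab{\mf b}_e$.

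For (3), let $\mf a_\bullet$ be a $p$-family and $r\in\mf a_e$. Iterating $\mf a_n^{[p]}\subseteq\mf a_{n+1}$ gives $r^{p^f}\in\mf a_e^{[p^f]}\subseteq\mf a_{e+f}$ for every $f\geq 0$. In particular this holds for all $f\gg0$, so $r\in\pstab{\mf a}_e$.

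For (1), the plan is to show containment in both directions, using that $\pstab{\mf a}_\bullet$ is a $p$-family by \Cref{p-stab-is-p-family}, so that (3) applies to it.
\begin{itemize}
\item Applying (3) to $\pstab{\mf a}_\bullet$ gives $\pstab{\mf a}_e\subseteq\pstab{\pstab{\mf a}}_e$.
\item For the reverse, take $r\in\pstab{\pstab{\mf a}}_e$, so there is $f_0$ with $r^{p^{f}}\in\pstab{\mf a}_{e+f}$ for all $f\geq f_0$. Fix one such $f=f_0$. By definition of $\pstab{\mf a}_{e+f_0}$ there is $g_0$ with $(r^{p^{f_0}})^{p^g}=r^{p^{f_0+g}}\in\mf a_{e+f_0+g}$ for all $g\geq g_0$. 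So $r^{p^h}\in\mf a_{e+h}$ for all $h\geq f_0+g_0$, which is exactly the condition $r\in\pstab{\mf a}_e$.
\end{itemize}

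The only point needing care is in (1): the two ``for all $f\gg0$'' quantifiers compose correctly, because fixing a single $f_0$ already yields a full tail of exponents $h$. Beyond that I expect no real obstacle. The final remark that $p$-stabilization acts like a closure on $p$-families then follows: (3) says it is extensive, (2) says it is monotone, and (1) says it is idempotent.
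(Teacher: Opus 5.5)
Your proposal is correct and follows the paper's proof essentially step for step. Parts (2) and (3) come directly from the definition, with (3) using the iterated $p$-family condition. For (1), one inclusion applies (3) to the $p$-family $\pstab{\mf a}_\bullet$, and the other composes the two tail conditions. Your explicit handling of the quantifiers (fixing a single $f_0$ to obtain the whole tail $h\geq f_0+g_0$) is slightly more careful than the paper's ``for all $f\gg 0$ and $g\gg 0$'' phrasing.
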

\begin{proof}
We will prove these out of order, starting with the second and third:
\begin{enumerate}[start=2]
\item Let $r\in \pstab{\mf a}_e$, so that $r^{p^f}\in \mf a_{e+f}$ for all $f\gg 0$. But by taking sufficiently large $f$, we get $r^{p^f}\in \mf a_{e+f} \subseteq \mf b_{e+f}$, and thus $r\in \pstab{\mf b}_e$.

\item If $r\in \mf a_e$, then since it is a $p$-family, of course $r^{p^f} \in \mf a_{e+f}$ for all~$f$, and thus $r\in \pstab{\mf a}_e$. 
\end{enumerate}
Now we return to the first property on the list:
\begin{enumerate}
\item By using the other two properties just proven and the fact that $\pstab{\mf a}_\bullet$ is a $p$-family, we automatically get $\pstab{\mf a}_e \subseteq \pstab{\pstab{\mf a}}_e$ for all~$e$. 
Conversely, take $r\in \pstab{\pstab{\mf a}}_e$. 
Then for all $f\gg 0$ and $g\gg 0$, $r^{p^{f+g}} = (r^{p^f})^{p^g} \in \mf a_{(e+f)+g}$ as desired.\qedhere
\end{enumerate}
\end{proof}

As a caution to the reader, we note that the restriction to $p$-families in the third property of \Cref{p-stabilization-basic-properties} is necessary: in \Cref{colon-system-stabilization}, we will see an example of an $F$-graded system where in fact $\pstab{\mf a}_e \subseteq \mf a_e$ for all~$e$, namely the system where $\mf a_e = I^{[p^e]}:I$ for some fixed ideal $I$, yielding the smaller stabilization $\pstab{\mf a}_e = I^{[p^e]}$. 
We also note that the containment in the third property can indeed sometimes be strict:
\begin{zb}
Fix a constant $c\in \N$ with $c\geq 1$, and consider the $p$-family $\mf a_e = \langle x^{c+p^e}\rangle \subset k[x]$. 
However, $\pstab{\mf a}_e = \langle x^{1+p^e}\rangle$, since monomial $x^a\in \pstab{\mf a}_e$ means $(x^a)^{p^f} \in \langle x^{c+p^{e+f}}\rangle$ for all $f\gg 0$, i.e., $a \geq \frac{c}{p^f}+p^e$ for all $f\gg 0$. In particular, if $c> 1$, the $p$-stabilization is strictly larger than the original family.
\end{zb}
In fact, even exponential growth is not enough if the growth factor is less than $p$:
\begin{zb}
\label{zb:exponential-growth-insufficient}
Fix a constant $c\in \N$ and consider the system $\mf a_e = \langle x^{c^e +  p^e}\rangle \subset k[x]$. 
This is a $p$-family if and only if $c^{e+1}\leq p\cdot c^e $ for all~$e$. 
One can further check that a monomial $x^a \in \pstab{\mf a_e}$ if and only if $a\geq \frac{c^{e+f}}{p^f} + p^e$ for all~$f\gg 0$.
In particular, if $0<c<p$, then $\pstab{\mf a}_e = \langle x^{1+p^e}\rangle$.
\end{zb}

In light of these examples and the first property of $p$-stabilization (\Cref{p-stabilization-basic-properties}), it would be interesting to identify the $F$-graded systems that are \emph{$p$-stable}, i.e., the systems $\mf a_\bullet$ such that $\pstab{\mf a}_e = \mf a_e$ for all~$e$. 
To be $p$-stable, it is clearly necessary for the system to be a $p$-family to begin with. We now work towards giving a sufficient condition.

\begin{lemma}
\label{stable-inside-map-f-graded}
Let $R$ be a Noetherian $F$-finite ring, and let $\mf a_\bullet$ be an $F$-graded system. Then for any fixed  surjective degree $d$ map $\varphi \in \ca C^R_d$, we have 
\[
\pstab{\mf a}_e \subseteq \bigcup_{m>0} \bigcap_{n\geq m} \varphi^{\star n}(F_*^{nd}\mf a_{e+nd}).
\]
\end{lemma}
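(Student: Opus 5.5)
Fix $r\in\pstab{\mf a}_e$. The plan is to use surjectivity of $\varphi$ to ``undo'' the Frobenius powers that appear in the definition of $p$-stabilization, by pairing $\varphi$ with the $R$-linearity rule $\varphi(F_*^d(s^{p^d}t)) = s\,\varphi(F_*^d t)$.

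First we unwind the definition: there is some $f_0$ such that $r^{p^f}\in \mf a_{e+f}$ for all $f\geq f_0$. We take $m$ with $md\geq f_0$. Then every $n\geq m$ satisfies $nd\geq f_0$, and so
\[
r^{p^{nd}}\in \mf a_{e+nd}.
\]

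Next we choose a preimage of $1$. Since $\varphi$ is surjective, there is $u\in R$ with $\varphi(F_*^d u)=1$. The iterate $\varphi^{\star n}=\varphi\circ F_*^d\varphi\circ\cdots$ is then also surjective. It has an explicit preimage of $1$, namely
\[
u_n = u\cdot u^{p^d}\cdots u^{p^{(n-1)d}},
\]
as a direct induction using $(\varphi\star\psi)(F_*^{d+d'}(x^{p^{d'}}y))=\varphi(F_*^d(x\,\psi(F_*^{d'}y)))$ shows. Concretely, $\varphi^{\star n}(F_*^{nd}(u_n))=1$ follows inductively from $\varphi^{\star n}=\varphi^{\star(n-1)}\star\varphi$. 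We only need the existence of some $v_n$ with $\varphi^{\star n}(F_*^{nd}v_n)=1$.

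Now we apply $R$-linearity of $\varphi^{\star n}\in\ca C^R_{nd}$:
\[
r = r\,\varphi^{\star n}(F_*^{nd}v_n) = \varphi^{\star n}\bigl(F_*^{nd}(r^{p^{nd}}v_n)\bigr).
\]
Since $r^{p^{nd}}\in\mf a_{e+nd}$ and $\mf a_{e+nd}$ is an ideal, we have $r^{p^{nd}}v_n\in \mf a_{e+nd}$. Therefore $r\in\varphi^{\star n}(F_*^{nd}\mf a_{e+nd})$ for every $n\geq m$, which places $r$ in $\bigcap_{n\geq m}$ and hence in the union over $m$.

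The only point needing care is that surjectivity of $\varphi$ implies surjectivity of every $\varphi^{\star n}$. This follows because the image of $\varphi\star\psi$ contains $\varphi(F_*^d(\operatorname{im}\psi))$, and this equals $\operatorname{im}\varphi$ when $\psi$ is surjective. Everything else is routine bookkeeping with the star product from \Cref{eq:cartier-mult}.
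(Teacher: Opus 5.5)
Your proposal is correct and follows essentially the same route as the paper: choose $u$ with $\varphi(F_*^d u)=1$, observe that $u^{1+p^d+\cdots+p^{(n-1)d}}$ maps to $1$ under $\varphi^{\star n}$, and use $R$-linearity to write $r=\varphi^{\star n}\bigl(F_*^{nd}(r^{p^{nd}}u^{(p^{nd}-1)/(p^d-1)})\bigr)$ with $r^{p^{nd}}\in\mf a_{e+nd}$ for $n\gg 0$. As in the paper, this tacitly uses $d\geq 1$, so that $nd\to\infty$ and your choice of $m$ with $md\geq f_0$ is possible.
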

\begin{proof}
Since $\varphi$ is surjective, in particular there is some $s\in R$ with $\varphi(F_*^d s) =1$, so that further $\varphi^{\star n}(F_*^{nd} s^{1+p^d+\cdots + p^{(n-1)d}}) = 1$ for all $n$. Then consider any $r\in \pstab{\mf a}_e$, so that $r^{p^f}\in \mf a_{e+f}$ for all $f\gg 0$. In particular, for $n\gg 0$ we have
\[
r = r\cdot \varphi^{\star n}(F_*^{nd} s^{(p^{nd}-1)/(p^d-1)}) 
= \varphi^{\star n} (F_*^{nd}(r^{p^{nd}}s^{(p^{nd}-1)/(p^d-1)}))\in \varphi^{nd}(F_*^{nd}\mf a_{e+nd}).
\]
This gives the desired containment.
\end{proof}

\begin{prop}
\label{map-gives-stable-p-family}
Let $R$ be a Noetherian $F$-finite ring, 
and let $\mf b_\bullet$ be a $p$-family in $R$. 
Suppose there exists some $d$ and some map $\varphi\in \ca C^R_d$ such that $\varphi$ is surjective and for all $e$, we have $\varphi(F_*^d \mf b_{e+d}) \subseteq \mf b_e$. 
Then $\mf b_\bullet$ is \emph{$p$-stable}, i.e., $\pstab{\mf b}_e = \mf b_e$ for all $e$. 
\end{prop}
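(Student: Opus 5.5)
Since $\mf b_\bullet$ is a $p$-family, part (3) of \Cref{p-stabilization-basic-properties} already gives $\mf b_e\subseteq \pstab{\mf b}_e$ for every $e$. It therefore suffices to prove the reverse containment $\pstab{\mf b}_e\subseteq \mf b_e$. For this I would use \Cref{stable-inside-map-f-graded}, applied to the given surjective map $\varphi\in\ca C^R_d$ (a $p$-family is $F$-graded, so the lemma applies). It yields
\[
\pstab{\mf b}_e \subseteq \bigcup_{m>0}\bigcap_{n\geq m}\varphi^{\star n}(F_*^{nd}\mf b_{e+nd}).
\]
So the task reduces to showing $\varphi^{\star n}(F_*^{nd}\mf b_{e+nd})\subseteq \mf b_e$ for every $n\geq 1$. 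Once that holds, every set in the union is contained in $\mf b_e$, and we are done.

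I would prove $\varphi^{\star n}(F_*^{nd}\mf b_{e+nd})\subseteq \mf b_e$ by induction on $n$. The case $n=1$ is exactly the hypothesis $\varphi(F_*^d\mf b_{e+d})\subseteq\mf b_e$. For the inductive step, write $\varphi^{\star (n+1)} = \varphi\star\varphi^{\star n} = \varphi\circ F_*^d\varphi^{\star n}$. Unwinding the product $\star$, for $r\in \mf b_{e+(n+1)d}$ we get
\[
\varphi^{\star(n+1)}(F_*^{(n+1)d} r) = \varphi\bigl(F_*^d\bigl(\varphi^{\star n}(F_*^{nd} r)\bigr)\bigr).
\]
Apply the inductive hypothesis with $e$ replaced by $e+d$, which is legitimate since the hypothesis holds for all $e$. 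This gives $\varphi^{\star n}(F_*^{nd}r)\in \mf b_{e+d}$, and then one more application of the hypothesis places the outer image in $\mf b_e$. Because $\varphi^{\star n}$ is additive, it is enough to check generators, and ideal-closure then gives the containment of the full image.

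The only mild subtlety is bookkeeping of the order of composition in $\star$ and the indices, to make sure the inner map lands in $\mf b_{e+d}$ rather than some other degree. I expect no real obstacle: surjectivity of $\varphi$ is only needed to invoke \Cref{stable-inside-map-f-graded}, and the $p$-family hypothesis only for the easy containment.
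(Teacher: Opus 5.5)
Your proposal is correct and is essentially the paper's proof. You get $\mf b_e\subseteq\pstab{\mf b}_e$ from part (3) of \Cref{p-stabilization-basic-properties} and the reverse containment from \Cref{stable-inside-map-f-graded} plus the iterated bound $\varphi^{\star n}(F_*^{nd}\mf b_{e+nd})\subseteq\mf b_e$; the only cosmetic difference is that you peel off the outermost copy of $\varphi$ and apply induction at index $e+d$, while the paper peels off the innermost copy to get a descending chain of containments, and both bookkeepings are valid.
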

\begin{proof}
Note that the condition iterates in the sense that 
\[
\varphi^{\star n}(F_*^{nd}\mf b_{e+nd}) \subseteq \varphi^{\star (n-1)}(F_*^{(n-1)d} \mf b_{e+(n-1)d}) \subseteq \cdots \subseteq \mf b_e.
\]
Thus
\[
\pstab{\mf b}_e \subseteq \bigcup_{m>0}\bigcap_{n\geq m} \varphi^{\star n}(F_*^{nd}\mf a_{e+nd}) 
\subseteq \bigcup_{m>0}\bigcap_{n\geq m} \mf b_e = \mf b_e.
\]
Since $\mf b_\bullet$ is already a $p$-family, $\mf b_e\subseteq \pstab{\mf b}_e$ by the third property of $p$-stabilization (\Cref{p-stabilization-basic-properties}) and so $\mf b_\bullet$ is stable, as desired. 
\end{proof}

\begin{rmk}
The requirement that $\mf b_\bullet$ be a $p$-family in the previous proposition is indeed necessary. Consider the $F$-graded system $\mf b_e = \langle x^{1+p+\cdots + p^{e-1}}\rangle$ in $k[x]$, and let $\varphi$ be the standard monomial splitting in $\hom_{k[x]}(F_*k[x], k[x])$, which sends $F_*1$ to $1$. Then
\[
\varphi\left(F_* \mf b_{e+1}\right) = \varphi\left(F_* \langle x^{1+p+\cdots + p^{e-1}}\rangle\right)
    = x^{1+\cdots + p^{e-2}} \varphi\left(F_*\langle x\rangle\right)
    = \mf b_{e-1} \varphi(F_*\langle x \rangle ) = x\mf b_{e-1},
\]
so we have the desired containment.
But the system is not $p$-stable---by \Cref{minl-system-stabilization}, $\pstab{\mf b}_e=x\mf b_{e-1}$.
\end{rmk}

\begin{zb}
\label{zb:cc-p-stable}
Let $(R,\mf m)$ be an $F$-split local ring, and let $I$ be an ideal. Then $\cc{\bullet}{I}$ is $p$-stable. To see this, let $\varphi\in \ca C^R_1$ be a Frobenius splitting, and consider $r\in \cc{e+1}{I}$. Then for all $\psi \in \ca C^R_e$, we have
\[
\psi\star\varphi(F_*^{e+1}r) = \psi(F_*^e \varphi(F_*r)) \in I,
\]
since $\psi \star \varphi \in \ca C^R_{e+1}$. Thus $\varphi(F_*r)\in A_e(I)$, and because the system is already a $p$-family, \Cref{map-gives-stable-p-family} lets us conclude that $\cc{\bullet}{I}$ is indeed $p$-stable.
\end{zb}

These results prompt the following conjecture:

\begin{conj}
\label{conj:stable-p-family}
Let $R$ be a strongly $F$-regular ring, 
and let $\mf b_\bullet$ be a $p$-family in $R$. Then $\mf b_\bullet$ is \emph{$p$-stable}, i.e., $\pstab{\mf b}_e = \mf b_e$ for all $e$, if and only if there exists some $d$ and some map $\varphi\in \ca C^R_d$ such that $\varphi$ is surjective and for all $e$, we have $\varphi(F_*^d \mf b_{e+d}) \subseteq \mf b_e$. 
\end{conj}

This conjecture holds when $\mf b_\bullet$ is a system of monomial ideals, as we shall see in \Cref{stable-monomial-families}.

\begin{rmk}
A non-strongly $F$-regular ring, and even a non-$F$-split ring can certainly have stable $p$-families. For example, the $p$-family $\mf b_e = \langle 1 \rangle$ for all $e$ is always $p$-stable, but in a non-$F$-split ring there are no surjective maps, so our conjecture fails. However, we can't just weaken the hypothesis to being any map. For example, in $S=k[x]$, the family $\mf b_e = \langle x^{p^e+2}\rangle$ is not stable, because 
\[
x^{p^2+2} \cdot x^{p-2} = (x^{p+1})^p \in \mf b_2
\]
but $x^{p+1}\notin \mf b_1$. 
However, taking $\varphi = x^2 \star \Phi$, where $\Phi$ generates $\hom_S(F_*S,S)$, we see that for any $rx^{p^{e+1}+2} \in \mf b_{e+1}$, 
\[
\varphi(F_*(rx^{p^{e+1}+2})) = x^{p^e}\varphi(F_*(rx^2)) = x^{p^e+2}\Phi(r) \in \mf b_e.
\]
\end{rmk}

As our final lemma towards relating $\mf a_\bullet$ and $\pstab{\mf a}_\bullet$, it will help to have some constraints on how far apart $\mf a_e$ and $\pstab{\mf a}_e$ can get:

\begin{lemma}
\label{ag-ae-subset-be}
Let $\mf a_\bullet$ be an $F$-graded system. Then for every $e$, $\mf a_1 \cdot \mf a_e \subseteq \pstab{\mf a}_e$.
\end{lemma}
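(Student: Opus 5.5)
Since $\pstab{\mf a}_e$ is an ideal (\Cref{p-stab-is-p-family}), it suffices to check generators of $\mf a_1\cdot\mf a_e$. So fix $a\in \mf a_1$ and $b\in \mf a_e$, and show $(ab)^{p^f}\in \mf a_{e+f}$ for all $f\gg 0$; in fact the plan is to show it for every $f\geq 0$. The only tools needed are the defining containment $\mf a_e^{[p^f]}\mf a_f\subseteq \mf a_{e+f}$ and \Cref{minl-sys-containment}, which gives $\mf a_f\supseteq \prod_{i=0}^{f-1}\mf a_1^{[p^i]}$.

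The key computation is as follows. Write
\[
(ab)^{p^f} = b^{p^f}\cdot a^{p^f}.
\]
The first factor lies in $\mf a_e^{[p^f]}$. Since $a^{p^f}$ is a multiple of $a^{1+p+\cdots+p^{f-1}}$ (because $p^f\geq 1+p+\cdots+p^{f-1}$), we have
\[
a^{p^f} \in \prod_{i=0}^{f-1}\mf a_1^{[p^i]}\subseteq \mf a_f.
\]
This uses \Cref{minl-sys-containment}; for $f=0$, interpret $a^{p^0}=a\in R=\mf a_0$. Therefore
\[
(ab)^{p^f}\in \mf a_e^{[p^f]}\mf a_f\subseteq \mf a_{e+f}
\]
by the $F$-graded condition for $e,f\geq 1$. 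The edge cases $e=0$ or $f=0$ are trivial, or can be handled by noting that $\mf a_0=R$. This holds for all $f$, so $ab\in\pstab{\mf a}_e$.

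There is essentially no obstacle. The one point to be careful about is the inequality $p^f\geq \sum_{i<f}p^i=(p^f-1)/(p-1)$, which lets the extra power of $a$ be absorbed. One must also invoke \Cref{minl-sys-containment} with the correct ordering of the bracket powers; note that the condition $\mf a_{f-1}^{[p]}\mf a_1\subseteq\mf a_f$ gives the induction directly.
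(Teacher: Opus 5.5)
Your proof is correct and matches the paper's argument: fix $a\in\mf a_1$ and $b\in\mf a_e$, use \Cref{minl-sys-containment} to get $b^{p^f}a^{(p^f-1)/(p-1)}\in\mf a_e^{[p^f]}\mf a_f\subseteq\mf a_{e+f}$, and absorb the extra power of $a$ using $p^f\geq (p^f-1)/(p-1)$. The only difference is that you spell out the absorption step and the edge cases $e=0$ and $f=0$, which the paper leaves implicit.
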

\begin{proof}
For any $f\in \N$, we have
\[
\mf a_{e+f} \supset \mf a_e^{[p^{f}]}\mf a_{f} \supset \mf a_e^{[p^{f}]}\cdot \prod_{i=0}^{f-1} \mf a_1^{[p^{i}]}.
\]
In particular, this means that for $r\in \mf a_1$, $s\in \mf a_e$, we have
\[
\mf a_{e+f} \ni s^{p^{f}}   r^{\sum_{i=0}^{f-1} p^{i}} = s^{p^{f}} r^{\frac{p^{f}-1}{p -1}}.
\]
Thus $(sr)^{p^{f}}\in \mf a_{e+f}$ for all $f\geq 1$, which means that $rs \in \pstab{\mf a}_e$ as desired.
\end{proof}
In the case that $\mf a_1=0$, the previous lemma is of course not very informative.

\begin{rmk}
One might hope to have a containment showing ``something related to $\pstab{\mf a}_e$'' is contained in ``something related to $\mf a_e$''. There are unfortunately some limitations here.  
Taking $\mf a_e = \langle x^{p^e+c^e}\rangle $ and $\pstab{\mf a}_e = \langle x^{p^e+1}\rangle$ for integer $0<c<p$ as in \Cref{zb:exponential-growth-insufficient} shows that for any monomial ideal $I=\langle x^a\rangle$, we have that eventually, $I \cdot \pstab{\mf a}_e \not\subseteq \mf a_e$. So there is no fixed multiplicative factor that will work in this direction.
\end{rmk}

We are finally ready to prove the main result of this section.

\begin{proof}[Proof of \Cref{a-SFR-iff-b-SFR}]
\label{pf:a-SFR-iff-b-SFR}
To show the contrapositive, assume that $\pstab{\mf a}_\bullet$ is not strongly $F$-regular. By \Cref{p-family-f-split-iff-sfr}, this means $\pstab{\mf a}_\bullet$ is also not $F$-split, and by \Cref{gb-simplified} this tells us that for all $e$, $\pstab{\mf a}_e\subseteq \cc{e}{\mf m}$. But then \Cref{ag-ae-subset-be} further tells us
\[
\mf a_1 \mf a_e \subseteq \pstab{\mf a}_e \subseteq \cc{e}{\mf m}
\]
for all $e$, and so $\mf a_e$ cannot be strongly $F$-regular.

For the other  direction, now assume that $\pstab{\mf a}_\bullet$ is strongly $F$-regular. Thus there exists an $e$ such that $\pstab{\mf a}_1\pstab{\mf a}_e \not\subseteq \cc{e}{\mf m}$, i.e., there is $r\in \pstab{\mf a}_1$, $s\in \pstab{\mf a}_e$ with $rs \notin \cc{e}{\mf m}$. 
On the one hand, \Cref{zb:cc-p-stable} ensures $(rs)^{p^{e'}}\notin \cc{e+e'}{\mf m}$ for every $e'$, since $A_\bullet(\mf m)$ is $p$-stable. 
On the other hand, by definition of $\pstab{\mf a}_\bullet$, for all $e'\gg 0$ we have $r^{p^{e'}}\in \mf a_{1+e'}$ and $s^{p^{e'}} \in \mf a_{e+e'}$. 
Let $e_0$ be an $e'\gg 0$ which satisfies both conditions, i.e., $r^{p^{e_0}}\in \mf a_{1+e_0}$ and $s^{p^{e_0}}\in \mf a_{e+e_0}$.

Then $(rs)^{p^{e_0}}\in \mf a_{1+e_0}\mf a_{e+e_0} \setminus \cc{e+e_0}{\mf m}$, which in particular means $r^{p^{e_0}}\notin \cc{e+e_0}{\mf m}:\mf a_{e+e_0}$. Since $r\in \pstab{\mf a}_1$, this means $r$, and also $r^{p^{e_0}}$, is in $\sqrt{\mf a_{e}}$ for all $e\gg 0$, and thus $r^{p^{e_0}}$ is a test element in the sense of \Cref{radical-test-element} such that $\mf a_\bullet$ is eventually $F$-split along $r^{p^{e_0}}$, as desired.
\end{proof}

\subsection{Application: Our Three Main Examples}
\label{sec:stabilization-zb}

Now that we've seen some benefits of $p$-stabilization, we will show how to compute the $p$-stabilization for (some special cases of) our main examples from \Cref{zb:basic-three}. A key tool for computing the $p$-stabilization of a monomial idea will be the associated $p$-body, introduced in \Cref{sec:shapes}. In fact, many of the proofs here will be deferred to \Cref{sec:shapes-zb}, after we have developed the necessary machinery.

\begin{thm}
\label{minl-system-stabilization}
Let $S=k[x_1,\ldots, x_d]$ for $k$ an $F$-finite field, and let $I$ be a monomial ideal with minimal monomial generating set $\{x^\nu \: | \: \nu\in \mc V\}$ where $\mc V\subset \bb N^d$. Let $\mf a_e = \prod_{i=0}^{e-1} I^{[p^i]}$. 
Define
\[
J = \left\langle x^{\lceil \sum_\nu c_\nu \nu\rceil} \: \Big| \: c_\nu \in \R_{\geq 0}, \ \sum_{\nu\in \mc V} c_\nu = \frac{1}{p-1} \right\rangle.
\]
Then the $p$-stabilization is
\[
	\pstab{\mf a}_e = J\cdot \mf a_e = J \cdot\prod_{i=0}^{e-1} I^{ [p^i] }.
\]
\end{thm}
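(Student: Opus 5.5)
The plan is to translate everything into exponent vectors, since $\mf a_e$, $J$ and (by the lemma on homogeneous systems) $\pstab{\mf a}_e$ are all monomial ideals. Write $E_e\subseteq\N^d$ for the exponents of monomials in $\mf a_e$. Because $I^{[p^i]}$ is generated by the $x^{p^i\nu}$, we have
\[
E_e=\Big\{\textstyle\sum_{i=0}^{e-1}p^i\nu_i+\N^d \;:\; \nu_i\in\mc V\Big\}.
\]
So $x^a\in\pstab{\mf a}_e$ if and only if, for all $f\gg0$, there are $\nu_0,\dots,\nu_{e+f-1}\in\mc V$ with $p^fa\geq\sum_{i=0}^{e+f-1}p^i\nu_i$ componentwise. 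Dividing by $p^f$ and reindexing splits the right side into two pieces. The \emph{head} is $\sum_{i=0}^{e-1}p^i\mu_i$ (the top $e$ vectors, now with exponent $p^i$). The \emph{tail} is $t_f=\sum_{j=1}^{f}p^{-j}\nu'_j$, a nonnegative combination of $\mc V$ with total weight $\sum_{j=1}^f p^{-j}=(1-p^{-f})/(p-1)$.

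\textbf{Inclusion $\pstab{\mf a}_e\subseteq J\mf a_e$.} Fix $x^a\in\pstab{\mf a}_e$. For each large $f$ we get a decomposition $a\geq h_f+t_f$ with $h_f=\sum_{i<e}p^i\mu_i^{(f)}$. There are only finitely many choices of the head, since $\mc V$ is finite. So we may pass to a subsequence of $f$'s along which the head is a fixed $h$, with $x^h\in\mf a_e$. The tails lie in a compact set, namely the nonnegative combinations of $\mc V$ with weight at most $1/(p-1)$. Passing to a further subsequence, $t_f\to w=\sum c_\nu\nu$ with $\sum c_\nu=1/(p-1)$, $c_\nu\ge0$, and we still have $a-h\ge w$. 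Since $a-h$ is an integer vector, $a-h\geq\lceil w\rceil$. Hence $x^{a-h}\in J$ and $x^a\in J\mf a_e$.

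\textbf{Inclusion $J\mf a_e\subseteq\pstab{\mf a}_e$.} Take $x^h\in\mf a_e$ with $h=\sum_{i<e}p^i\mu_i$, and $b\geq\lceil w\rceil$ with $w=\sum c_\nu\nu$ of weight $1/(p-1)$. By the reduction above, it suffices to produce, for all large $f$, digits $\nu'_1,\dots,\nu'_f\in\mc V$ with $\sum_{j=1}^f p^{-j}\nu'_j\leq b$. We must approximate the real combination $w$ by a "base-$p$" combination in which exactly one vector of $\mc V$ appears at each level $p^{-j}$. This is the \emph{main obstacle}: not every weight distribution is realized by such digit sequences, and the naive approach of taking floors of the $p^fc_\nu$ loses up to $|\mc V|$ units, which must be absorbed by the slack between $w$ and $\lceil w\rceil$ (possibly zero in some coordinates).

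For this step I would try a greedy digit construction. Consider the partial masses $m_\nu(f)=\#\{j\le f:\nu'_j=\nu\}$ weighted by $p^{-j}$. At each level $j$, choose $\nu'_j$ to be a $\nu$ whose running weighted mass is furthest below its target $c_\nu$. The goal is to show that the weighted count of $\nu$ stays at most $c_\nu$ plus an error $O(p^{-j})$, so that the whole tail lies below $w+\epsilon\mathbf 1$ for an $\epsilon$ decaying in $f$.

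That bound alone does not give $\leq b$ in coordinates where $b=w$ exactly. Two remedies are available. First, in such coordinates, restrict the greedy choice to the face of $\mc V$ achieving the relevant coordinate equality. Second, use the fact that the total tail weight is $(1-p^{-f})/(p-1)$, strictly less than $1/(p-1)$, leaving room of order $p^{-f}$, which can be assigned to slightly undershoot each $c_\nu$. Failing these, I would defer to the $p$-body machinery of \Cref{sec:shapes}, identifying the $p$-stabilization via \Cref{shape-stabilization-correspondence} as the monomials over the associated $p$-body. That reduces this containment to showing that the associated $p$-body of $\mf a_\bullet$ at level $e$ is exactly $E_e+\{v:v\geq\lceil w\rceil\}$, i.e.\ to a closure statement about the limiting tail set.
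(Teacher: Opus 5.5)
Your first inclusion, $\pstab{\mf a}_e\subseteq J\mf a_e$, is fine. Freezing the head along a subsequence and taking a limit of the tails is a compactness version of the paper's route through the digit-sum set $\mc W$ of \Cref{minl-system-shape}. The gap is the second inclusion. No cleverer digit construction can close it, because with $J$ defined by arbitrary real weights the inclusion is false. Take $p=3$, $S=k[x,y]$, $I=\langle x^4,y^4\rangle$. The weights $c=(\tfrac14,\tfrac14)$ sum to $\tfrac12=\tfrac1{p-1}$ and give $\sum_\nu c_\nu\nu=(1,1)$. So $xy\in J$ (in fact $J=\langle x,y\rangle^2$), and $x^5y\in J\mf a_1$. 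But $(x^5y)^{3^f}=x^{5\cdot 3^f}y^{3^f}$ lies in $\mf a_{f+1}$ only if there is a partition $\{0,\dots,f\}=A\sqcup B$ with $4\sum_{i\in A}3^i\le 5\cdot 3^f$ and $4\sum_{i\in B}3^i\le 3^f$. Since $4\cdot 3^{f-1}>3^f$, both $f$ and $f-1$ must lie in $A$, and then $4(3^f+3^{f-1})=\tfrac{16}{3}\,3^f>5\cdot 3^f$. Hence $x^5y\notin\pstab{\mf a}_1$, and the same count shows $xy\notin\pstab{\mf a}_0$.

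The step of your plan that actually breaks is the greedy estimate. Any digit sequence gives weight at least $p^{-1}$ to the generator chosen at level $j=1$. So when all $c_\nu<1/p$ (possible as soon as $|\mc V|(p-1)>p$), the overshoot is a fixed amount rather than $O(p^{-j})$, and neither the ceiling nor the $p^{-f}$ deficit in total weight absorbs it in general. The weights realized by digit sequences, $c_\nu=\sum_{i\in A_\nu}p^{-i}$ for partitions $\{A_\nu\}$ of $\N_{>0}$, form a proper Cantor-like subset of the simplex; in the example above, $c_1=\tfrac14$ is not of this form. Your fallback through \Cref{shape-stabilization-correspondence} fails for the same reason: the $p$-body is the upward closure of $\mc W$, not of the whole simplex image.

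What your limit argument does prove, once you observe that the limiting tail lies in the compact set $\mc W$, is $\pstab{\mf a}_e=J'\mf a_e$ with $J'=\langle x^{\lceil\omega\rceil}\mid\omega\in\mc W\rangle$. The reverse inclusion for $J'$ follows at once by truncating $\omega$. The paper's proof has the same hole. Its final sentence only notes that each $\omega\in\mc W$ is some $\sum_\nu c_\nu\nu$ with $\sum_\nu c_\nu=\tfrac1{p-1}$, which yields $\subseteq$ but not $\supseteq$. In the example, $J'=\langle x^2,y^2\rangle$ is strictly smaller than $J$.
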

This theorem will be proven on page~\pageref{proof-minl-system-stabilization}.

\begin{thm}
\label{colon-system-stabilization}
Let $R$ be an $F$-finite regular local ring, fix a non-zero ideal $I$ in $R$, and define $\mf a_e = I^{[p^e]}:I$. 
Then $\pstab{\mf a}_e = I^{[p^e]}$. 
\end{thm}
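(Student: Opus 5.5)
We prove the two containments separately. Throughout, $R$ is an $F$-finite regular local ring and $\mf a_e = I^{[p^e]}:I$.

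\textbf{The containment $I^{[p^e]}\subseteq \pstab{\mf a}_e$.} The system $I^{[p^\bullet]}$ is a $p$-family. For every $e$ we have $I^{[p^e]}\subseteq I^{[p^e]}:I = \mf a_e$, since $I^{[p^e]}I\subseteq I^{[p^e]}$. By \Cref{p-stabilization-basic-properties}(2) this gives $\pstab{I^{[p^\bullet]}}_e\subseteq \pstab{\mf a}_e$, and by part (3) we get $I^{[p^e]}\subseteq \pstab{I^{[p^\bullet]}}_e$. Alternatively, argue directly: if $r\in I^{[p^e]}$, then $r^{p^f}\in I^{[p^{e+f}]}\subseteq \mf a_{e+f}$ for every $f$.

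\textbf{The containment $\pstab{\mf a}_e\subseteq I^{[p^e]}$.} Suppose $r^{p^f}\in I^{[p^{e+f}]}:I$ for all $f\gg 0$. Then $r^{p^f}I\subseteq (I^{[p^e]})^{[p^f]}$ for all $f\gg0$. Pick any nonzero $c\in I$; this is possible because $I\neq 0$. Then $c\,r^{p^f}\in (I^{[p^e]})^{[p^f]}$ for all $f\gg 0$, so $r$ lies in the tight closure $(I^{[p^e]})^*$. Note that $R$ is a domain, being regular local, so $c$ is not in any minimal prime. Since regular rings are weakly $F$-regular, every ideal is tightly closed, and hence $r\in I^{[p^e]}$.

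If we prefer to stay within the paper's toolkit and avoid citing tight closure, the same conclusion follows from \Cref{zb:regular-cc}, which gives $\cc{e}{J}=J^{[p^e]}$. Take $J=I^{[p^e]}$. Because $R$ is strongly $F$-regular, there exist $f$ and $\varphi\in\ca C^R_f$ with $\varphi(F_*^f c)=1$. Since the condition holds for all large $f$, we may enlarge $f$ so that $c\,r^{p^f}\in J^{[p^f]}=\cc{f}{J}$. Then
\[
r = r\,\varphi(F_*^f c)=\varphi(F_*^f(c\,r^{p^f}))\in J,
\]
as required.

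The only step with real content is the second containment: it is where regularity (or at least strong $F$-regularity) and the hypothesis $I\neq 0$ enter. The remaining steps are formal.
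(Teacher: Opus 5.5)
Your proof is correct. The first containment is exactly the paper's argument, but the reverse containment takes a genuinely different route.

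The paper keeps the whole ideal $I$ as the multiplier. Flatness of Frobenius over the regular ring $R$ (Kunz) gives $I^{[p^{e+f}]}:r^{p^f}=(I^{[p^e]}:r)^{[p^f]}$, so $I\subseteq (I^{[p^e]}:r)^{[p^f]}$ for all $f\gg 0$. If $I^{[p^e]}:r$ were proper, this would force $I\subseteq\bigcap_f \mf m^{[p^f]}=0$ by Krull's intersection theorem. In effect this inlines the classical flatness-plus-Krull proof that regular rings have trivial tight closure. It uses only regularity, not $F$-finiteness or the existence of splittings.

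You instead pick a single nonzero $c\in I$ and observe that $c\,r^{p^f}\in (I^{[p^e]})^{[p^f]}$ for $f\gg 0$ says exactly $r\in (I^{[p^e]})^*$. You then appeal to triviality of tight closure, or prove it directly with a splitting along $c$.

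What your route buys is generality. It shows $I^{[p^e]}\subseteq \pstab{\mf a}_e\subseteq (I^{[p^e]})^*$ in any Noetherian ring, provided $I$ is not contained in any minimal prime. So the statement holds verbatim in any weakly $F$-regular ring. Your splitting version works in any $F$-finite strongly $F$-regular local ring, which is the standing hypothesis of the paper's main theorems.

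Two small remarks on that splitting argument:
\begin{enumerate}
\item You only use the trivial inclusion $J^{[p^f]}\subseteq \cc{f}{J}$, not the equality from \Cref{zb:regular-cc}.
\item The step ``we may enlarge $f$'' needs a justification. Replace $\varphi$ by $\pi^{\star k}\star\varphi$ for a Frobenius splitting $\pi\in\ca C^R_1$; this still sends $F_*^{f+k}c$ to $1$.
\end{enumerate}
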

\begin{proof}
We see first that
\[
(I^{[p^e]})^{[p^f]} = I^{[p^{e+f}]} \subseteq I^{[p^{e+f}]}:I,
\]
and so $I^{[p^e]}\subseteq \pstab{\mf a}_e$. On the other hand, if $r^{p^f}\in \mf a_{e+f}=I^{[p^{e+f}]}:I$, then
\[
I\subseteq I^{[p^{e+f}]}:r^{p^f} = (I^{[p^e]}:r)^{[p^f]}.
\]
But this is supposed to hold for all sufficiently large $f$. If $I^{[p^e]}:r$ is a proper ideal, then Krull's intersection theorem would tell us that $I=0$. Otherwise, this means that $r\in I^{[p^e]}$ as desired.
\end{proof}

\begin{thm}
\label{maxl-t-system-stabilization}
Let $S = k[x_1,\ldots, x_d]$ for $k$ an $F$-finite field, let $\mf m$ be the homogeneous maximal ideal, and fix $t\in \R_{\geq 0}$. Let $\mf a_e = \mf m^{\lceil t(p^e-1)\rceil}$. Then the $p$-stabilization is
\[
	\pstab{\mf a}_e 
= \mf m^{\lceil tp^e\rceil}.
\]
\end{thm}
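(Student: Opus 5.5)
Since $\mf a_e = \mf m^{\lceil t(p^e-1)\rceil}$ is a monomial ideal, the preceding lemma shows that $\pstab{\mf a}_e$ is monomial. So it suffices to determine exactly which monomials $x^\alpha$ lie in $\pstab{\mf a}_e$. Membership in a power of $\mf m$ depends only on total degree. Writing $|\alpha| = \sum_i \alpha_i$, we have $x^\alpha\in\pstab{\mf a}_e$ if and only if
\[
p^f|\alpha| \geq \lceil t(p^{e+f}-1)\rceil \quad\text{for all } f\gg 0.
\]
Thus the whole proof reduces to showing that this condition is equivalent to the single inequality $|\alpha|\geq \lceil tp^e\rceil$.

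For the direction $\mf m^{\lceil tp^e\rceil}\subseteq \pstab{\mf a}_e$, suppose $|\alpha| \geq \lceil tp^e\rceil \geq tp^e$. Then for every $f$,
\[
p^f|\alpha| \geq tp^{e+f} \geq t(p^{e+f}-1).
\]
Since the left side is an integer, it is at least $\lceil t(p^{e+f}-1)\rceil$, so $x^{p^f\alpha}\in\mf a_{e+f}$ for all $f$.

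For the reverse containment, suppose $p^f|\alpha| \geq \lceil t(p^{e+f}-1)\rceil \geq tp^{e+f}-t$ for all $f\gg0$. Dividing by $p^f$ gives
\[
|\alpha| \geq tp^e - t/p^f \quad\text{for all } f\gg 0.
\]
Letting $f\to\infty$ yields $|\alpha|\geq tp^e$. Since $|\alpha|$ is an integer, $|\alpha|\geq\lceil tp^e\rceil$. Note that only the weak limit inequality is needed here, because integrality of $|\alpha|$ handles the rounding. The case $t=0$ gives $\mf a_e=S$ and $\pstab{\mf a}_e=S=\mf m^0$, consistent with the formula.

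The one point needing care is the initial reduction. We must make sure it is legitimate to test $p$-stabilization on monomials only, which holds because $\pstab{\mf a}_\bullet$ is a monomial system by the lemma on $\bb N^d$-graded systems. We must also use that $x^\beta\in\mf m^N$ if and only if $|\beta|\geq N$. After that, the argument is the elementary ceiling estimate above, and I expect no real obstacle. Alternatively, one could phrase the argument through the associated $p$-body of \Cref{sec:shapes}, whose body here is $\{|\alpha|\geq t\}$ scaled appropriately. The direct argument avoids that machinery.
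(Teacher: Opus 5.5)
Your proof is correct, and it takes a more direct route than the paper.

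\textbf{The paper's route.} It first computes the associated $p$-body $\Delta(\mf a_\bullet)=\{\alpha\in(\N[1/p])^d : |\alpha|\geq t\}$ (\Cref{maxl-t-system-shape}). It then reads off $\pstab{\mf a}_e$ from the $p$-body/$p$-stabilization correspondence (\Cref{shape-stabilization-correspondence}): $x^\beta\in\pstab{\mf a}_e$ exactly when $\beta/p^e\in\Delta(\mf a_\bullet)$, i.e.\ when $|\beta|\geq tp^e$.

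\textbf{Your route.} You work straight from the definition of $\pstab{\mf a}_e$, using only the lemma that the $p$-stabilization of a monomial system is monomial. That same lemma is what implicitly justifies the final equality in the paper's proof of the correspondence theorem, so the two arguments are unwindings of one another. The core ceiling estimate is the same in both. Your step ``divide by $p^f$, let $f\to\infty$, then use integrality of $|\alpha|$'' is a slightly cleaner replacement for the paper's contrapositive choice of $F$ with $|\alpha|<t-t/p^F$.

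\textbf{What each buys.} Your approach gains brevity and self-containment. The paper's detour produces the explicit $p$-body as a byproduct. The paper uses it to illustrate the correspondence, and it is the object appearing in the volume formula of \Cref{sec:numerical-general}.
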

This theorem will be proven on page~\pageref{proof-maxl-t-system-stabilization}.

\section{\texorpdfstring{The Associated $p$-Body}{The Associated p-Body}}
\label{sec:shapes}
We now pivot to \emph{monomial $F$-graded systems}, i.e., $F$-graded systems in a polynomial ring for which every ideal is a monomial ideal. 
In this setting we develop a technique for computing the $p$-stabilization, which comes by way of a geometric construction (the \emph{associated $p$-body}) that is interesting in its own right. In this section, we assume the ambient ring $S=k[x_1,\ldots, x_d]$ is a polynomial ring over an $F$-finite field $k$.

\begin{notation}
If $I$ is a monomial ideal, then $\log I = \{\alpha \in \N^d \: | \: x^\alpha \in I\}$ is the set of exponent vectors.
For $n\in \N$, we write $[n]:=\{1, 2, \ldots, n\}$.
Whenever possible, we will use lowercase Greek letters to denote vectors (be they in $\N^d$, $\left(\Z[1/p]\right)^d$, or $\R^d$). 
We use the termwise partial order $\leq$ on vectors: given two vectors $\alpha, \beta$, write $\alpha \leq \beta$ if $\alpha_i \leq \beta_i$ for all coordinates~$i$. The norm of a vector is $|\alpha| = \sum_i \alpha_i$. Finally, we use $\mbbm 1$ to denote the vector which has every coordinate equal to~$1$.
\end{notation}

Given a monomial ideal in $k[x_1,\ldots, x_d]$, it is often the case that looking at corresponding diagrams of exponent vectors in $\N^d$ can shed light on the algebraic picture. A version of this correspondence is also illuminating for $F$-graded systems:

\begin{defn}
\label{def:associated-shape}
Let $\mf a_\bullet$ be a monomial $F$-graded system in $k[x_1,\ldots, x_d]$. Then the \emph{associated $p$-body} in $\left(\N\left[1/p\right]\right)^d$ is
\[
\Delta(\mf a_\bullet) := \bigcup_{f>0}\  \bigcap_{e\geq f} \frac{1}{p^e}\log \mf a_e 
= \left\{\alpha \in \left(\N\left[\frac{1}{p}\right]\right)^d\: \bigg| \: p^e\alpha \in \log \mf a_e \ \forall e \gg 0 \right\}.
\]
\end{defn}

\begin{prop}
\label{associated-p-body-invt}
Let $\mf a_\bullet$ be a monomial $F$-graded system, and let 
\(
\Delta = \Delta(\mf a_\bullet).
\)
Then 
\[
\Delta = \Delta + \left(\N\left[\frac{1}{p}\right]\right)^d.
\]
\end{prop}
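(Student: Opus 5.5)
The inclusion $\Delta \subseteq \Delta + (\N[1/p])^d$ is immediate, since we may add the zero vector. So the plan is to prove the reverse inclusion. We take $\alpha \in \Delta$ and $\beta \in (\N[1/p])^d$, and show that $\alpha+\beta \in \Delta$. By definition this means finding $E$ such that $p^e(\alpha+\beta) \in \log \mf a_e$ for all $e \geq E$.

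First we choose $E$ large enough that two things hold for all $e \geq E$. One is that $p^e\alpha \in \log\mf a_e$, which is possible because $\alpha\in\Delta$. The other is that $p^e\beta \in \N^d$: write $\beta = \gamma/p^{m}$ with $\gamma\in\N^d$, and then any $E\geq m$ works for this condition.

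Then, for $e \geq E$, the vector $p^e\alpha$ lies in $\log \mf a_e$, so $x^{p^e\alpha}\in \mf a_e$. The vector $p^e\beta$ lies in $\N^d$, so $x^{p^e\beta}\in S$ is an honest monomial. Because $\mf a_e$ is an ideal, it follows that
\[
x^{p^e(\alpha+\beta)} = x^{p^e\alpha}\,x^{p^e\beta}\in \mf a_e .
\]
Hence $p^e(\alpha+\beta)\in\log\mf a_e$ for all $e\geq E$, and therefore $\alpha+\beta\in\Delta$.

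Nothing here uses the $F$-graded condition; only the ideal property of each $\mf a_e$ is needed. The one point requiring care, which is the only possible obstacle, is that $\beta$ has $p$-power denominators. This is handled by the fact that the definition of $\Delta$ only requires membership for $e\gg 0$, so we may enlarge the threshold past the denominator exponent of $\beta$.
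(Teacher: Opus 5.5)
Your proof is correct and is essentially the paper's argument: choose a threshold past both the exponent $m$ with $p^m\beta\in\N^d$ and the point after which $x^{p^e\alpha}\in\mf a_e$, then use that $\mf a_e$ is an ideal to get $x^{p^e\alpha}x^{p^e\beta}\in\mf a_e$. Your observation that only the ideal property of each $\mf a_e$, and not the $F$-graded condition, is used is also accurate.
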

\begin{proof}
Since $\ulin 0\in \left(\N\left[\frac{1}{p}\right]\right)^d$, the containment $\subset$ is clear. 

Take $\alpha \in \Delta$ and let $\beta \in \left(\N\left[\frac{1}{p}\right]\right)^d$. We will show that $\alpha + \beta \in \Delta$. Choose $G\in \N$ such that $p^G\beta \in \N^d$. By definition of $\Delta$, there exists $g\geq G$ such that for all $e\geq g$, we have $x^{p^e\alpha} \in \mf a_e$. Since our choice of $G$ ensures $x^{p^e\beta} \in S$, this means that $x^{p^e\alpha}x^{p^e\beta}\in \mf a_e$, so that $\alpha + \beta \in \frac{1}{p^e} \log \mf a_e$ for all $e\gg 0$, as desired. 
\end{proof}

\begin{rmk}
	As we will be primarily interested in using $p$-bodies in the context of the $p$-body/$p$-stabilization correspondence (see \Cref{shape-stabilization-correspondence} below), $\Delta(\mf a_\bullet)$ is functionally equivalent to the subset of $\R^d$ obtained by taking $\Delta(\mf a_\bullet) + \R_{\geq 0}^d$, since the $(\Z[1/p])^d$ points are the same.

    This perspective also explains the source of the name. Working in the setting of a local ring accompanied by a sufficiently nice valuation, Hern\'andez and Jeffries introduce a subset of~$\R^d$ called the \emph{associated $p$-body} to a collection of subsets in a semigroup called a \emph{$p$-system} \cite[Def.~4.4]{Hernandez+Jeffries.18}. In Section~5 of their paper they then apply this to $p$-families to get the associated $p$-body living in~$\R^d$.
\end{rmk}

\begin{defn}
\label{def:associated-p-family}
Let $\Delta$ be any subset of $\left(\N[\frac{1}{p}]\right)^d$. 
Then we can define an \emph{associated $p$-family} $\mf a^\Delta_\bullet$ in $k[x_1,\ldots, x_d]$, where
\[
\mf a^\Delta_e := \left\langle x^{\beta} \: \Big| \: \frac{1}{p^e}\beta \in \Delta\right\rangle.
\]
\end{defn}

This is a $p$-family because for any $x^{\beta} \in \mf a^\Delta_e$, we have $\frac{p\beta}{p^{e+1}} = \frac{1}{p^e}\beta\in \Delta$ and so $x^{p\beta} \in \mf a^\Delta_{e+1}$. 

\begin{thm}[$p$-body/$p$-stabilization correspondence]
\label{shape-stabilization-correspondence}
If $\mf b_\bullet$ is a monomial $F$-graded system, then the associated $p$-family of the associated $p$-body of $\mf b_\bullet$ is the $p$-stabilization, i.e., $\mf a_\bullet^{\Delta(\mf b_\bullet)} = \pstab{\mf b}_\bullet$. 
If $\Delta\subseteq (\N[1/p])^d$,
then
$\Delta(\mf a^\Delta_\bullet) = \Delta+ \left(\N[1/p]\right)^d$.

In particular, this gives a correspondence between monomial $p$-stable $F$-graded systems and subsets of $(\N[1/p])^d$ which are invariant under adding $\left( \N[1/p]\right)^d$. 
\end{thm}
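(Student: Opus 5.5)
Both equalities are direct unwindings of the definitions, so I will compare monomials on each side. For the first claim, both $\mf a_\bullet^{\Delta(\mf b_\bullet)}$ and $\pstab{\mf b}_\bullet$ are monomial ideals: the former by \Cref{def:associated-p-family}, the latter by the lemma that $p$-stabilization preserves monomiality. It therefore suffices to show that a monomial $x^\beta$, $\beta\in\N^d$, lies in $\mf a_e^{\Delta(\mf b_\bullet)}$ if and only if it lies in $\pstab{\mf b}_e$. By definition, $x^\beta\in\pstab{\mf b}_e$ means $x^{p^f\beta}\in\mf b_{e+f}$ for all $f\gg0$. On the other side, $x^\beta\in\mf a^{\Delta(\mf b_\bullet)}_e$ means $\beta/p^e\in\Delta(\mf b_\bullet)$, i.e., $p^g\beta/p^e\in\log\mf b_g$ for all $g\gg0$. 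Substituting $g=e+f$, this says $x^{p^f\beta}\in\mf b_{e+f}$ for all $f\gg0$, which is the same condition.

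One subtlety needs checking. The generators of $\mf a_e^\Delta$ are the $x^\beta$ with $\beta/p^e\in\Delta$, but a monomial lying in this ideal need only be a multiple of such a generator. Here that causes no trouble: $\Delta(\mf b_\bullet)$ is closed under adding $(\N[1/p])^d$ by \Cref{associated-p-body-invt}, so the set of monomials in the ideal is exactly the set of generators.

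For the second claim I will show two inclusions. First, $\Delta\subseteq\Delta(\mf a^\Delta_\bullet)$. Given $\alpha\in\Delta$, choose $G$ with $p^G\alpha\in\N^d$. For every $e\ge G$ we have $(p^e\alpha)/p^e=\alpha\in\Delta$, so $x^{p^e\alpha}\in\mf a_e^\Delta$. Hence $\alpha\in\Delta(\mf a^\Delta_\bullet)$. Since $\mf a^\Delta_\bullet$ is a $p$-family, hence $F$-graded, \Cref{associated-p-body-invt} applies and upgrades this to $\Delta+(\N[1/p])^d\subseteq\Delta(\mf a_\bullet^\Delta)$.

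Conversely, suppose $\alpha\in\Delta(\mf a^\Delta_\bullet)$. Then for some (indeed all large) $e$ we have $x^{p^e\alpha}\in\mf a_e^\Delta$. Since $\mf a_e^\Delta$ is a monomial ideal, $x^{p^e\alpha}$ is divisible by some generator $x^\beta$ with $\beta/p^e\in\Delta$. Then $p^e\alpha-\beta\in\N^d$, so
\[
\alpha=\beta/p^e+(p^e\alpha-\beta)/p^e\in\Delta+(\N[1/p])^d.
\]

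The correspondence statement then follows formally. A monomial $p$-stable system $\mf b_\bullet$ equals $\mf a^{\Delta(\mf b_\bullet)}_\bullet$ by the first claim. A subset $\Delta$ invariant under adding $(\N[1/p])^d$ satisfies $\Delta(\mf a^\Delta_\bullet)=\Delta$ by the second claim. It remains to see that $\mf a^\Delta_\bullet$ is $p$-stable. By the first claim, its $p$-stabilization is $\mf a^{\Delta(\mf a^\Delta_\bullet)}_\bullet=\mf a^\Delta_\bullet$, so it is. Therefore $\mf b_\bullet\mapsto\Delta(\mf b_\bullet)$ and $\Delta\mapsto\mf a^\Delta_\bullet$ are mutually inverse. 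I expect no real obstacle; the only point needing care is the monomial-versus-generator subtlety noted in the first claim.
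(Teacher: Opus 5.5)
Your proposal is correct and takes essentially the same approach as the paper. For the first claim, you compare monomials and handle the generator-versus-monomial point via the invariance of $\Delta(\mf b_\bullet)$, where the paper compares generating sets directly. For the second claim, you prove the two inclusions elementwise, where the paper uses the $p$-family monotonicity to write $\Delta(\mf a^\Delta_\bullet)=\bigcup_f \frac{1}{p^f}\log \mf a^\Delta_f$ with $\frac{1}{p^f}\log \mf a^\Delta_f=(\Delta\cap\frac{1}{p^f}\N^d)+\frac{1}{p^f}\N^d$; you also spell out the ``in particular'' correspondence, which the paper leaves implicit.
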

\begin{proof}
A straightforward computation shows
\begin{align*}
\mf a^{\Delta(\mf b_\bullet)}_e 
    &= \left\langle x^{\beta} \: \Big| \: \frac{1}{p^e}\beta \in \Delta(\mf b_\bullet)\right\rangle
    = \left\langle x^{\beta} \: \Big| \: \frac{p^f}{p^e}\beta \in \log(\mf b_f) \ \forall f\gg 0\right\rangle \\
    &= \left\langle x^{\beta} \: \Big| \: x^{p^{f-e}\beta} \in \mf b_f \ \forall f\gg 0\right\rangle 
    = \pstab{\mf b}_e.
\end{align*}

For the statement about $\Delta$, 
since $\mf a^\Delta_\bullet$ is a $p$-family, we have $\frac{1}{p^e}\log \mf a^{\Delta}_e \subseteq \frac{1}{p^{e+1}}\log \mf a^{\Delta}_{e+1}$. Thus
\[
\Delta(\mf a^\Delta_\bullet) = \bigcup_{f>0}\ \bigcap_{e\geq f} \frac{1}{p^e} \log(\mf a^\Delta_e) 
    = \bigcup_{f>0} \frac{1}{p^f} \log(\mf a^\Delta_f).
\]

The generating monomials of $\mf a_f^\Delta$ come from the lattice points in $\Delta \cap \left(\frac{1}{p^f}\N^d\right)$. Since $\mf a_f^\Delta$ is an ideal, the monomials correspond to the lattice points in $\left(\Delta \cap \left(\frac{1}{p^f}\N^d\right)\right)+ \frac{1}{p^f}\N^d$, i.e., we have 
\[
\frac{1}{p^f}\log(\mf a^\Delta_f) = \left(\Delta \cap \left(\frac{1}{p^f}\N^d\right)\right) + \frac{1}{p^f} \N^d,
\]
and so taking the union over all $f>0$ gives our desired result.
\end{proof}

Now we will see our first example of using an associated $p$-body to find the $p$-stabilization:
\begin{zb}
	\label{one-variable-p-stable}
	If $\Delta \subseteq \N[1/p]$ is invariant under adding $\N[1/p]$, then there exists some $a \in \R$ such that either $\Delta = [a,\infty)\subset \N[1/p]$ or $\Delta = (a,\infty)\subset \N[1/p]$. 
    In particular, any $p$-stable system $\mf b_\bullet$ looks like either 
    $\mf b_e = \langle x^{\lceil ap^e\rceil}\rangle$ for $e>0$ or $\mf b_e=\langle x^{\lfloor ap^e\rfloor+1}\rangle$ for $e> 0$.
\end{zb}

This example also prompts the return of the question: which (monomial) $F$-graded systems are $p$-stable? The $p$-body/$p$-stabilization correspondence (\Cref{shape-stabilization-correspondence}) gives a geometric answer to this question. 
But there is also a description in terms of a distinguished map, in support of \Cref{conj:stable-p-family}:  
\begin{prop}
	\label{stable-monomial-families}
	Let $\mf b_\bullet$ be a monomial $p$-family. Then $\mf a_\bullet^{\Delta(\mf b_\bullet)} = \mf b_\bullet$ if and only if for all~$e$, $\varphi(F_*\mf b_{e+1}) \subseteq \mf b_e$, 
	where $\varphi\in \hom_S(F_*S, S)$ is the standard monomial splitting which sends $F_*1\mapsto 1$ and the other standard monomial generators of $F_*S$ to $0$.
\end{prop}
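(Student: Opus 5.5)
Since $\mf b_\bullet$ is a monomial $p$-family, the $p$-body/$p$-stabilization correspondence (\Cref{shape-stabilization-correspondence}) gives $\mf a_\bullet^{\Delta(\mf b_\bullet)} = \pstab{\mf b}_\bullet$. So the statement to prove is that $\mf b_\bullet$ is $p$-stable if and only if $\varphi(F_*\mf b_{e+1})\subseteq \mf b_e$ for all $e$. I will prove the two directions separately: one follows directly from an earlier proposition, and the other from a short monomial computation.

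For the ``if'' direction, I apply \Cref{map-gives-stable-p-family} with $d=1$ and the map $\varphi$. The map $\varphi$ is surjective because $\varphi(F_*1)=1$. Together with the hypothesis $\varphi(F_*\mf b_{e+1})\subseteq \mf b_e$ and the fact that $\mf b_\bullet$ is a $p$-family, that proposition gives $\pstab{\mf b}_e=\mf b_e$ for all $e$.

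For the ``only if'' direction, assume $\pstab{\mf b}_e=\mf b_e$ for all $e$. First I need to describe $\varphi$ explicitly on monomials. Fix a basis of $F_*k$ over $k$ containing $F_*1$. Then $F_*S$ is free over $S$ on the elements $F_*(\lambda x^{\rho})$, where $\lambda$ runs over that basis and $0\le \rho_i\le p-1$. The map $\varphi$ sends a basis element to $1$ if it is $F_*1$ and to $0$ otherwise. A short check then shows that for $\lambda\in k$ and $\gamma\in\N^d$, the element $\varphi(F_*(\lambda x^\gamma))$ is $0$ unless $\gamma\in p\N^d$, and if $\gamma = p\beta$ it lies in $k\cdot x^{\beta}$. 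The ideal $\mf b_{e+1}$ is monomial, so $F_*\mf b_{e+1}$ is generated as an $S$-module by elements $F_*(\lambda x^\gamma)$ with $x^\gamma\in \mf b_{e+1}$ and $\lambda\in k$. Hence $\varphi(F_*\mf b_{e+1})$ is contained in the ideal generated by the monomials $x^\beta$ with $x^{p\beta}\in \mf b_{e+1}$.

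It therefore suffices to show that $x^{p\beta}\in\mf b_{e+1}$ implies $x^\beta\in \mf b_e$. Since $\mf b_\bullet$ is a $p$-family, iterating the defining condition gives $(x^\beta)^{p^{f}} = (x^{p\beta})^{p^{f-1}}\in \mf b_{e+1+(f-1)}=\mf b_{e+f}$ for all $f\ge 1$. By the definition of $p$-stabilization this says $x^\beta\in\pstab{\mf b}_e$, and $\pstab{\mf b}_e=\mf b_e$ by $p$-stability.

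The main obstacle is the bookkeeping with the field coefficients when $k$ is not perfect: I must make sure $\varphi$ really carries monomial generators to scalar multiples of monomials, so that the image of a monomial ideal lands in the monomial ideal described above. Once that is settled, both directions are formal.
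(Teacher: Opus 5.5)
Your proof is correct and follows the paper's argument. The ``if'' direction is exactly \Cref{map-gives-stable-p-family} with $d=1$. The ``only if'' direction reduces to monomials $x^{p\beta}\in\mf b_{e+1}$ and shows $x^\beta\in\mf b_e$: the paper phrases this as $\beta/p^e = p\beta/p^{e+1}\in\Delta(\mf b_\bullet)$, and you phrase it equivalently through $\pstab{\mf b}_e$ using the $p$-family property. Your explicit description of $\varphi$ on elements $F_*(\lambda x^\gamma)$, which lands in $k\cdot x^{\gamma/p}$ when $\gamma\in p\N^d$ and is $0$ otherwise, is more accurate than the formula displayed in the paper's proof and also covers a non-perfect field $k$.
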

\begin{proof}
	If $\varphi(F_*\mf b_{e+1})\subseteq \mf b_e$ for all $e$, then \Cref{map-gives-stable-p-family} ensures that $\pstab{\mf b}_\bullet = \mf b_\bullet$.

	Conversely, suppose that $\mf a_\bullet^{\Delta(\mf b_\bullet)} = \mf b_\bullet$. To show $\varphi(F_*\mf b_{e+1}) \subseteq \mf b_e$, it suffices to consider what happens on monomials. 
	So, consider $x^\alpha \in \mf b_{e+1}$. 
	By assumption, $\alpha/p^{e+1}\in \Delta$, which means for all $f\gg 0$ we have $\frac{\alpha}{p^{e+1}} \in \frac{1}{p^{e+1+f}} \log \mf b_{e+1+f}$. 
	We also know that for a general monomial, the standard monomial splitting sends
	\[
		\varphi(F_* x^\alpha) = 
		\begin{cases}
			x^{(n-1)\mbbm 1} & \alpha = np\mbbm 1 \textrm{ for some positive integer $n$}\\
			0 & \textrm{else}.
		\end{cases}
	\]
	In the latter case the result is clearly in $\mf b_e$, so suppose $\alpha = np\mbbm 1$. But then $\frac{1}{p^e}\alpha = \frac{n}{p^e}\mbbm 1  \in \Delta$, and so $x^{n\mbbm 1} \in \mf a_e^{\Delta(\mf b_\bullet)} = \mf b_e$ as desired.
\end{proof}

\subsection{Application: Our Three Main Examples}
\label{sec:shapes-zb}

Again, now that we have seen some benefits of the associated $p$-body, we will show how to compute it for (some special cases of) our main examples. This will also allow us to describe the $p$-stabilizations of the corresponding ideals that were promised in \Cref{sec:stabilization-zb}.

\begin{thm}
	\label{minl-system-shape}
	Let $I$ be a monomial ideal in $S$. Suppose that $I$ has minimal monomial generating set $\{x^\nu \: | \: \nu\in \mc V\}$ where $\mc V\subset \bb N^d$, and let $\mf a_e = \prod_{i=0}^{e-1} I^{[p^i]}$. Consider the set of vectors
	\[
		\mc W = \left\{
			\sum_{i=1}^\infty v(i) \frac{1}{p^i} \: \bigg| \: v: \N_{>0} \to \mc V
			\right\} \subset \R^d.
	\]
	Then
	\[
		\Delta(\mf a_\bullet) = \left\{
			\mu \in \left( \Z[1/p]\right)^d \: \big| \: \exists\, \omega \in \mc W \textrm{ s.t. } \mu \geq \omega
			\right\}.
	\]
	In other words, $\Delta(\mf a_\bullet)$ is the $(\N[1/p])^d$-invariant subset generated by $\mc W$.
\end{thm}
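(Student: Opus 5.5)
First I make the ideals explicit. Since $I^{[p^i]}$ is generated by the monomials $x^{p^i\nu}$ with $\nu\in\mc V$, the product $\mf a_e=\prod_{i=0}^{e-1} I^{[p^i]}$ is generated by the monomials $x^{\sum_{i=0}^{e-1} p^i\nu_i}$ with $\nu_i\in\mc V$. Hence $\log \mf a_e$ is the set of $\beta\in\N^d$ with $\beta\geq \sum_{i=0}^{e-1}p^i\nu_i$ for some choice of $\nu_0,\ldots,\nu_{e-1}\in\mc V$. Dividing by $p^e$ and reindexing $j=e-i$ gives
\[
\frac{1}{p^e}\log\mf a_e=\Big\{\mu\in \tfrac{1}{p^e}\N^d \;\Big|\; \mu\geq \sum_{j=1}^{e} \frac{v(j)}{p^j}\text{ for some } v:[e]\to\mc V\Big\}.
\]
The truncated sums $\omega_e(v)=\sum_{j=1}^e v(j)p^{-j}$ are exactly the finite truncations of the elements of $\mc W$. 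Set $\Delta'$ to be the $(\N[1/p])^d$-invariant set generated by $\mc W$, i.e.\ the right-hand side of the theorem. I prove the two inclusions $\Delta'\subseteq\Delta(\mf a_\bullet)$ and $\Delta(\mf a_\bullet)\subseteq\Delta'$ separately.

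\emph{Inclusion $\Delta'\subseteq\Delta(\mf a_\bullet)$.} Suppose $\mu\in(\Z[1/p])^d$ satisfies $\mu\geq\omega=\sum_j v(j)p^{-j}$ for some $v:\N_{>0}\to\mc V$. Since every truncation satisfies $\omega_e(v)\leq\omega$, we get $\mu\geq\omega_e(v)$ for all $e$. Moreover $p^e\mu\in\N^d$ once $e$ is large, because $\mu$ has $p$-power denominators and $\mu\ge 0$. So $\mu\in\frac1{p^e}\log\mf a_e$ for all $e\gg0$, which means $\mu\in\Delta(\mf a_\bullet)$.

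\emph{Inclusion $\Delta(\mf a_\bullet)\subseteq\Delta'$.} This is the main obstacle. Given $\mu\in\Delta(\mf a_\bullet)$, for each large $e$ there is a finite word $v_e:[e]\to\mc V$ with $\omega_e(v_e)\leq\mu$, and these words need not be compatible as $e$ varies. I use a König's lemma / compactness argument, which works because $\mc V$ is finite: the minimal generating set of a monomial ideal is finite. The inequality $\omega_e(v)\le\mu$ is preserved when $v$ is truncated to a shorter prefix, since all the terms are nonnegative. So the set of finite words $v:[n]\to\mc V$ with $\omega_n(v)\le\mu$ forms a finitely branching tree. This tree is infinite, since it contains words of every large length $e$ together with all their prefixes. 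By König's lemma it has an infinite branch $v:\N_{>0}\to\mc V$, and along that branch $\omega_n(v)\le\mu$ for all $n$. Letting $n\to\infty$, the partial sums converge to $\omega=\sum_j v(j)p^{-j}\in\mc W$, and taking the coordinatewise limit gives $\omega\le\mu$, so $\mu\in\Delta'$. As an alternative to König's lemma, one could take an accumulation point via diagonal extraction: for each $j$, pass to a subsequence of $(v_e)$ on which $v_e(j)$ is eventually constant.

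Finally, the description of $\Delta(\mf a_\bullet)$ as the $(\N[1/p])^d$-invariant set generated by $\mc W$ is just a restatement of the displayed set. It is consistent with \Cref{associated-p-body-invt}, and it requires $\mu\ge0$, which holds automatically here because $\omega\ge0$.
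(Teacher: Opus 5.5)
Your proposal is correct and follows essentially the same route as the paper: the easy inclusion comes from the truncated sums $\sum_{i=1}^e v(i)p^{-i}\le\omega\le\mu$, and the hard inclusion from a compactness argument that uses the finiteness of $\mc V$. The paper builds the infinite word by repeatedly choosing a vector that appears infinitely often among compatible finite words, which is exactly the diagonal-extraction proof of the K\"onig's lemma step you invoke; your explicit remark that prefixes of admissible words remain admissible states precisely a point the paper uses only implicitly.
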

An example of the process of computing this associated $p$-body is shown in \Cref{fig:minl-sys-x3-y6} with starting ideal $I=\langle x^3,y^6\rangle$ in characteristic $3$. 

\begin{figure}
\centering
\begin{subfigure}{0.435\textwidth}
\centering
\includegraphics[width=0.7\textwidth, trim={15 50 15 40}, clip]{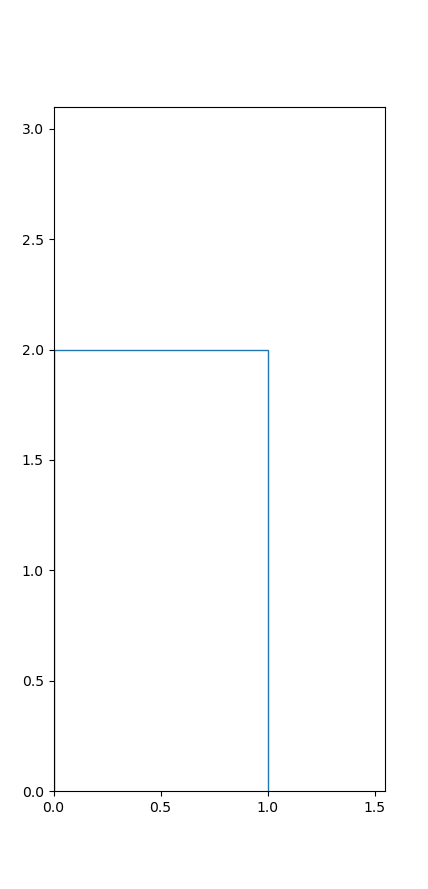}
\caption{$e=1$}
\end{subfigure}\hfill%
\begin{subfigure}{0.435\textwidth}
\centering
\includegraphics[width=0.7\textwidth,trim={15 50 15 40}, clip]{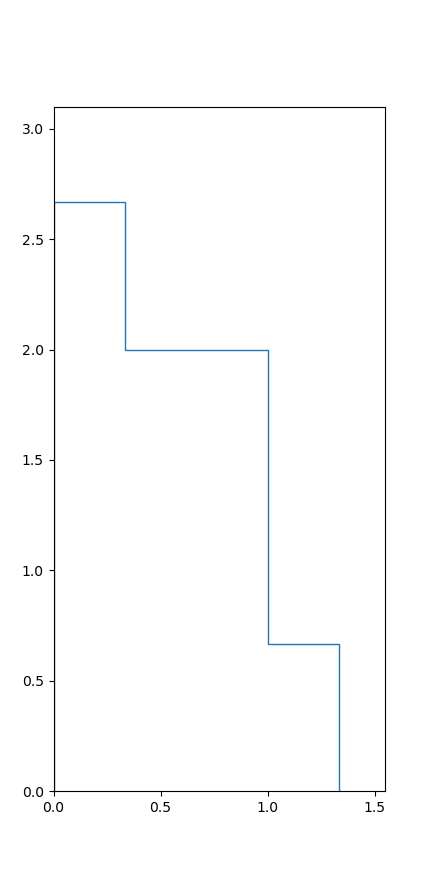}
\caption{$e=2$}
\end{subfigure}
\begin{subfigure}{0.435\textwidth}
\centering
\includegraphics[width=0.7\textwidth,trim={15 50 15 40}, clip]{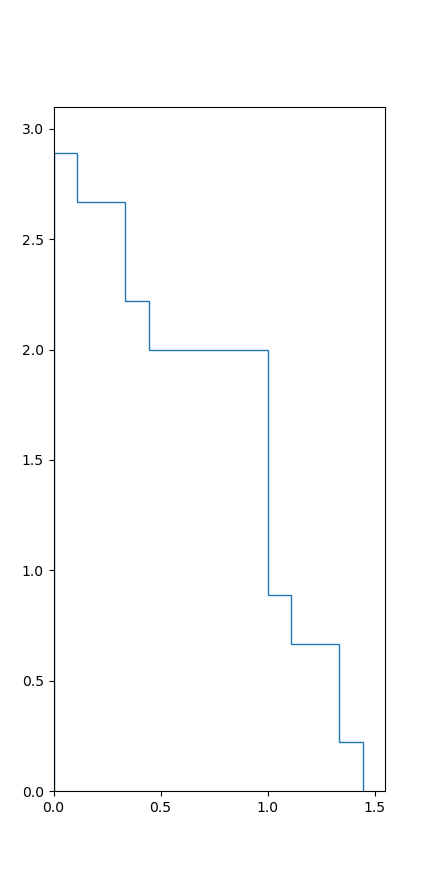}
\caption{$e=3$}
\end{subfigure}\hfill%
\begin{subfigure}{0.435\textwidth}
\centering
\includegraphics[width=0.7\textwidth,trim={15 50 15 40}, clip]{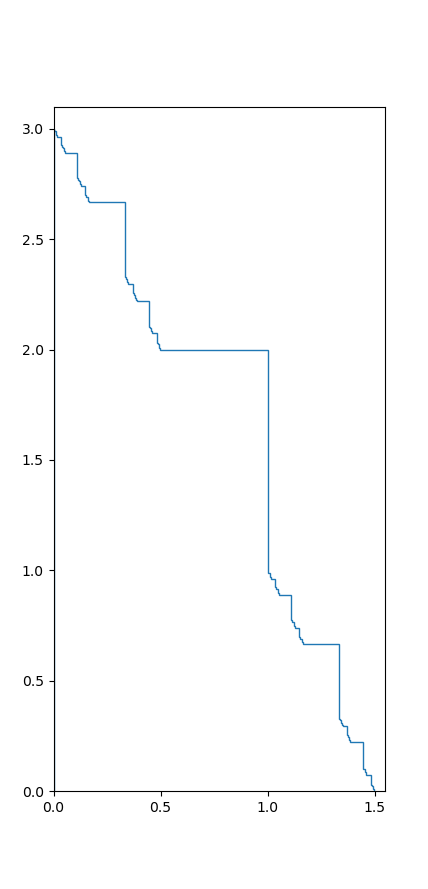}
\caption{$e=8$}
\end{subfigure}%
\caption{Plots of $\frac{1}{p^e}\log \mf a_e$ when $I=\langle x^3,y^6\rangle$, $\mf a_e = \prod_{i=0}^{e-1}I^{[p^i]}$, and $p=3$. The relevant lattice points of $\frac{1}{p^e}\N$ lie above and to the right of the blue line in each subfigure.}
\label{fig:minl-sys-x3-y6}
\end{figure}

\begin{proof}
	Note that each sum in $\mc W$ actually converges, since there are only finitely many vectors in $\mc V$.
	First, suppose that $\mu\in \Z[1/p]^d$ and that there exists some $\omega\in \mc W$ with corresponding function $v:\N_{>0}\to \mc V$ such that $\mu\geq \omega$. Consider any $e\in \N$ such that $p^e \mu\in \Z^d$. By assumption, 
	\[
		p^e \mu \geq p^e \omega \geq p^e \left(\sum_{i=1}^e v(i)\frac{1}{p^i}\right) = \sum_{i=0}^{e-1} v(e-i) p^i.
	\]
	In particular, this means that $x^{p^e\mu}$ is a multiple of $x^{v(e)}(x^{v(e-1)})^p\cdots (x^{v(1)})^{p^{e-1}} \in \prod_{i=0}^{e-1}I^{[p^i]}$.

	For the other direction, suppose $\mu\in \Delta$. 
    This means for all $e\gg 0$, $\mu\in \frac{1}{p^e}\log \mf a_e$. More specifically, for all such $e$, there exists $v^{(e)}:[e]\to \mc V$ such that $\sum_{i=1}^e v^{(e)}(i)\frac{1}{p^i}\leq \mu$. Since $\mc V$ is a finite set, we can iteratively define a function $\wt v: \N_{>0} \to \mc V$ via successively choosing vectors as follows:
	\begin{align*}
		\wt v(1) &= \mbox{a vector appearing infinitely often in the set $\{v^{(e)}(1)\: | \: e\gg 0\}$}.\\
		\wt v(n) &= \mbox{a vector appearing infinitely often in the set $\{v^{(e)}(n)\: | \: e\gg 0,\ v^{(e)}(i) = \wt v(i) \ \forall i<n\}$}.
	\end{align*}
	By design, for every $e$, the restriction $\wt v|_{[e]}$ agrees with the restriction $v^{(f)}|_{[e]}$ for some $f>e$ (in fact, for infinitely many such $f$). This ensures that the $e$th partial sum of $\sum_{i=1}^\infty \wt v(i)\frac{1}{p^i}$ matches the $e$th partial sum of the vector corresponding to $v^{(f)}$, which in particular is a lower bound for~$\mu$.
\end{proof}

Now that we have a formula for this associated $p$-body, we can use it to give a description of the $p$-stabilization of the minimal generator system.
\begin{proof}[Proof of \Cref{minl-system-stabilization}]
	\label{proof-minl-system-stabilization}
	As in the setup of \Cref{minl-system-shape}, let
	\[
		\mc W = \left\{
			\sum_{i=1}^\infty v(i) \frac{1}{p^i} \: \bigg| \: v: \N_{>0} \to \mc V
			\right\} \subset \R^d.
	\]
	Then by \Cref{minl-system-shape},
	\[
		\Delta(\mf a_\bullet) = \left\{
   \mu \in \left( \Z[1/p]\right)^d \: \big| \: \exists\, \omega \in \mc W \textrm{ s.t. } \mu \geq \omega
			\right\}.
	\]

	Since $\mf a_e^{\Delta(\mf a_\bullet)} = \pstab{\mf a}_e$ by \Cref{shape-stabilization-correspondence}, it suffices to understand the vectors $\beta\in \Z^d$ with $\frac{1}{p^e}\beta \in \Delta(\mf a_\bullet)$, i.e., to understand the vectors $\lceil p^e\alpha\rceil$ for $\alpha \in \Delta(\mf a_\bullet)$. In particular, it suffices to understand our generators from $\mc W$, so take
	\[
		\left\lceil \sum_{i=1}^\infty v(i)\frac{p^e}{p^i} \right\rceil 
		= \sum_{i=1}^e v(i) p^{e-i} + \left\lceil \sum_{i=1}^\infty v(e+i)\frac{1}{p^i} \right\rceil.
	\]
	The term outside of the ceiling corresponds to monomials in $\prod_{i=0}^{e-1} I^{[p^i]}$, so we only need to understand the ceiling term, which is of the form $\lceil \omega \rceil$ for $\omega \in \mc W$. 
	Since there are only finitely many vectors in $\mc V$, we can factor $\omega = \sum_\nu c_\nu \nu$, where $\sum_\nu c_\nu = \sum_{i=1}^\infty \frac{1}{p^i} = \frac{1}{p-1}$ as desired.
\end{proof}

For the next $F$-graded system, we will take another approach and use our already-computed $p$-stabilization to find the associated $p$-body.

\begin{thm}
	\label{colon-system-shape}
	For any fixed monomial ideal $I\subset S$, if $\mf a_e = I^{[p^e]}:I$, then 
	\[
		\Delta(\mf a_\bullet)=\log I.
	\]
\end{thm}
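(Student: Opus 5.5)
The plan is to compute $\Delta(\mf a_\bullet)$ level by level. By \Cref{shape-stabilization-correspondence}, $\Delta(\mf a_\bullet)\cap \frac{1}{p^e}\N^d = \frac{1}{p^e}\log \pstab{\mf a}_e$ for every $e$. Indeed, $\alpha\in\frac1{p^e}\N^d$ lies in $\Delta(\mf a_\bullet)$ exactly when $x^{p^e\alpha}\in\mf a^{\Delta(\mf a_\bullet)}_e=\pstab{\mf a}_e$, using that $\pstab{\mf a}_e$ is monomial. Since $\Delta(\mf a_\bullet)\subseteq(\N[1/p])^d=\bigcup_e \frac1{p^e}\N^d$, it therefore suffices to show $\pstab{\mf a}_e = I^{[p^e]}$ for every $e$. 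Then
\[
\frac{1}{p^e}\log I^{[p^e]} = \Big\{\alpha\in \tfrac{1}{p^e}\N^d \;\Big|\; \alpha\geq \nu \text{ for some minimal generator exponent } \nu \text{ of } I\Big\},
\]
and taking the union over $e$ gives $\Delta(\mf a_\bullet)=\log I+(\N[1/p])^d$. By \Cref{associated-p-body-invt}, this is the sense in which the statement reads ``$\Delta(\mf a_\bullet)=\log I$'': it is the $(\N[1/p])^d$-invariant set generated by $\log I$, or equivalently the set with the same $(\Z[1/p])^d$ points as $\log I+\R^d_{\geq0}$.

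For $\pstab{\mf a}_e=I^{[p^e]}$ I would like to cite \Cref{colon-system-stabilization}, but it is stated for regular local rings and $S$ is a polynomial ring. So I would redo it directly and combinatorially for monomial ideals, using that $\pstab{\mf a}_e$ is monomial.

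The inclusion $I^{[p^e]}\subseteq\pstab{\mf a}_e$ holds exactly as before, since $(I^{[p^e]})^{[p^f]}=I^{[p^{e+f}]}\subseteq I^{[p^{e+f}]}:I$.

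For the reverse inclusion, take a monomial $x^\beta\in\pstab{\mf a}_e$ and fix one generator exponent $\nu\in\mc V$. For all $f\gg0$ we have $x^{p^f\beta+\nu}\in I^{[p^{e+f}]}$. Hence there is some $\nu'_f\in\mc V$ with
\[
p^f\beta+\nu\geq p^{e+f}\nu'_f .
\]
Because $\mc V$ is finite, some $\nu'$ occurs as $\nu'_f$ for infinitely many $f$. Dividing by $p^f$ gives $\beta+\nu/p^f\geq p^e\nu'$. Letting $f\to\infty$ along those $f$ yields $\beta\geq p^e\nu'$, so $x^\beta\in I^{[p^e]}$.

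The main obstacle is this limiting argument, together with being careful about what ``$=\log I$'' means given the invariance in \Cref{associated-p-body-invt}. The pigeonhole over the finitely many generators is what replaces Krull's intersection theorem from the local proof. The limit step uses only that $\nu$ is fixed while $p^f\to\infty$, and that all vectors are integral.
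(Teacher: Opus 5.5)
Your proof is correct and has the same skeleton as the paper's. Both reduce via \Cref{shape-stabilization-correspondence} to the identity $\pstab{\mf a}_e = I^{[p^e]}$ and then read off the $p$-body of $I^{[p^\bullet]}$. The genuine difference is how you obtain that identity. The paper simply cites \Cref{colon-system-stabilization}, whose proof has two steps: flatness of Frobenius rewrites $I^{[p^{e+f}]}:r^{p^f}$ as $(I^{[p^e]}:r)^{[p^f]}$, and then Krull's intersection theorem finishes. That theorem is stated for regular local rings, but its proof runs verbatim in $S$, because Krull's theorem kills $\bigcap_f J^{p^f}$ for any proper ideal $J$ of a Noetherian domain. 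So your worry concerns only how the hypothesis is phrased, not a real obstruction. Your replacement, pigeonholing over the finite set $\mc V$ and letting $f\to\infty$ in $\beta+\nu/p^f\geq p^e\nu'$, is elementary and self-contained: it needs neither Kunz's flatness nor Krull. It is, however, confined to monomial ideals, whereas the paper's route gives the stabilization for every nonzero ideal of a regular ring.

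You are also more careful than the paper about what the conclusion means. The paper's step $\log I = \frac{1}{p^e}\log I^{[p^e]}$ is not literally true: for $I=\langle x\rangle\subset k[x]$ and $e=1$, the right side contains $1+\frac1p$. Since $\Delta(\mf a_\bullet)$ is $(\N[1/p])^d$-invariant by \Cref{associated-p-body-invt}, $\log I + (\N[1/p])^d$ is the correct reading, exactly as you say.

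Two small points. First, the level-wise identity $\Delta(\mf a_\bullet)\cap\frac1{p^e}\N^d = \frac1{p^e}\log\pstab{\mf a}_e$ follows directly from the definitions of $\Delta$ and $\pstab{\mf a}_e$. If you instead route it through $\mf a^{\Delta}_e$, the reverse inclusion uses the invariance in \Cref{associated-p-body-invt}, not monomiality. Second, choosing some $\nu\in\mc V$ requires $I\neq 0$. The statement genuinely needs this hypothesis, since for $I=0$ one has $\mf a_e=S$ and hence $\Delta(\mf a_\bullet)=(\N[1/p])^d$.
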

\begin{proof}
	We know from \Cref{colon-system-stabilization} that $\pstab{\mf a}_e = I^{[p^e]}$, and so by \Cref{shape-stabilization-correspondence}, it suffices to show that $\Delta(I^{[p^\bullet]})=\log I$. 

	But then $\log I = \frac{1}{p^e} \log I^{[p^e]}$, and so 
	\[
		\bigcap_{f>0} \ \bigcup_{e\geq f} \frac{1}{p^e} \log I^{[p^e]} = \log I
	\]
	as desired.
\end{proof}

Finally, we will compute the associated $p$-body of the rounding system specifically when starting with the homogeneous maximal ideal. 

\begin{thm}
	\label{maxl-t-system-shape}
	Let $\mf a_e = \mf m^{\lceil t(p^e-1)\rceil}$ for $t\in \R_{\geq 0}$ and $\mf m$ the homogeneous maximal ideal in $S$. Then
	\[
		\Delta(\mf a_\bullet) 
		= \left\{ \alpha \in \left(\N[\frac{1}{p}]\right)^{d}\: \Big| \: |\alpha|\geq t\right\}.
	\]
\end{thm}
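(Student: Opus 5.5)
We argue directly from \Cref{def:associated-shape}. The key fact is that $\mf m^n$ is the monomial ideal whose exponent set is $\log \mf m^n = \{\beta\in\N^d \mid |\beta|\geq n\}$. Hence, for $\alpha\in(\N[1/p])^d$ and $e$ large enough that $p^e\alpha\in\N^d$, we have
\[
p^e\alpha\in\log\mf a_e \iff p^e|\alpha| \geq \lceil t(p^e-1)\rceil \iff p^e|\alpha|\geq t(p^e-1),
\]
where the last equivalence holds because $p^e|\alpha|$ is an integer. Dividing by $p^e$, the condition becomes $|\alpha|\geq t - t/p^e$. So $\alpha\in\Delta(\mf a_\bullet)$ exactly when $|\alpha|\geq t-t p^{-e}$ for all $e\gg 0$.

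It remains to compare this with $|\alpha|\geq t$. If $|\alpha|\geq t$, the inequality holds for every $e$, since $t\geq 0$; hence $\alpha\in\Delta(\mf a_\bullet)$. Conversely, if $|\alpha|\geq t-tp^{-e}$ for all large $e$, letting $e\to\infty$ gives $|\alpha|\geq t$.

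The only subtle point is the rounding. We must use that $p^e|\alpha|$ is an integer once $e$ is large, so that comparing it with $\lceil t(p^e-1)\rceil$ is the same as comparing it with $t(p^e-1)$. The "for all $e\gg0$" quantifier in the definition of $\Delta$ lets us restrict to such $e$. The edge case $t=0$ gives $\mf a_e=S$ for all $e$, and then $\Delta$ is all of $(\N[1/p])^d$, which is consistent with the formula.

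As a follow-up, \Cref{maxl-t-system-stabilization} will come from \Cref{shape-stabilization-correspondence}. A monomial $x^\beta$ lies in $\pstab{\mf a}_e$ iff $|\beta|/p^e\geq t$, i.e.\ iff $|\beta|\geq\lceil tp^e\rceil$, giving $\pstab{\mf a}_e=\mf m^{\lceil tp^e\rceil}$.
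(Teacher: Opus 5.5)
Your proof is correct and takes essentially the same route as the paper: both work directly from the definition of $\Delta(\mf a_\bullet)$, using that $\log \mf m^n$ is the set of exponent vectors of norm at least $n$ and that $p^e|\alpha|$ is an integer for $e\gg 0$. The only difference is packaging: the paper proves the two inclusions separately (choosing $F$ with $|\alpha|<t-t/p^F$ for the reverse one), whereas you reduce to the single condition $|\alpha|\geq t-t/p^e$ for all $e\gg 0$ and pass to the limit.
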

\begin{proof}
	Suppose that $\alpha \in (\N[1/p])^d$ and $|\alpha|\geq t$. Choose $E$ such that $p^E\alpha \in \N^d$. Now for all $e\geq E$, we have $p^e|\alpha| \geq p^e t$ and $p^e \alpha \in \N^d$, so that 
	\[
		|p^e\alpha| \geq \lceil p^et\rceil \geq \lceil (p^e-1)t\rceil,
	\]
	and in particular, $p^e\alpha \in \log \mf m^{\lceil (p^e-1)t\rceil}$, so that $\alpha \in \Delta(\mf a_\bullet)$.

	Conversely, suppose that $\alpha \in(\N[1/p])^d$ but $|\alpha | < t$. Now choose $F$ such that $|\alpha|< t - \frac{t}{p^F}$. Then for all $e\geq F$, we have
	\[
		|\alpha| < t - \frac{t}{p^F} \leq t(1-\frac{1}{p^e}) 
		\, \implies \,
		p^e|\alpha| < (p^e-1)t\leq \lceil (p^e-1)t\rceil,
	\]
	and in particular $p^e\alpha \notin \log \mf m^{(p^e-1)t\rceil}$ for all such $e$, so that $\alpha \notin \Delta(\mf a_\bullet)$.
\end{proof}

Again, this allows us to compute the $p$-stabilization of this system as well:

\begin{proof}[Proof of \Cref{maxl-t-system-stabilization}]
	\label{proof-maxl-t-system-stabilization}
	We simply note that $\frac{1}{p^e}\alpha\in \Delta(\mf a_\bullet)$ if and only if $\frac{1}{p^e}|\alpha| \geq t$ if and only if $\alpha \geq t p^e$. 
	Since $|\alpha|\in \N$, this gives the desired ceiling statement.
\end{proof}

\section{Numerical Properties}
\label{sec:numerical-general}

We finish by pointing out some natural connections to an invariant of $F$-graded systems.

\begin{defn}
\label{def:cartier-multiplicity}
Let $R$ be a ring of prime characteristic $p$ with $\dim R = d$ which is local or standard graded with (homogeneous) maximal ideal $\mf m$, and let $\mf a_\bullet$ be an $F$-graded system of $R$ which is \emph{eventually $\mf m$-primary}.
The \emph{volume} of $\mf a_\bullet$ is
\[
\cmult(\mf a_\bullet) := \lim_{e\to\infty}\frac{\ell(R/\mf a_e)}{p^{ed}}.
\]
\end{defn}
This limit is analogous to the Hilbert-Kunz multiplicity (first considered by Kunz in \cite{Kunz.76}, and studied in-depth by Monsky \cite{Monsky.83}) and to the $F$-signature (introduced by this name by Huneke and Leuschke in \cite{Huneke+Leuschke.02}, but not shown to exist until Tucker's work a decade later \cite{Tucker.12}). In fact this volume is an extension of the notion of the volume of a $p$-family, introduced in \cite{Hernandez+Jeffries.18}, which already encompasses these two examples of the Hilbert-Kunz multiplicity and the $F$-signature.
When working with $p$-families, Hern\'andez and Jeffries have completely characterized the rings for which this volume is always guaranteed to exist:

\begin{thm}[{\cite[Thm.~1.2]{Hernandez+Jeffries.18}}]
Let $(R,\mf m)$ be a local ring of prime characteristic $p>0$ with $\dim R=d$. Then $\cmult(\mf a_\bullet)$ exists for every $p$-family $\mf a_\bullet$ of $\mf m$-primary ideals of $R$ if and only if the $R$-module dimension of the nilradical of the completion of $R$ is less than $d$.
\end{thm}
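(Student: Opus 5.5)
The plan is to follow the strategy of Hern\'andez and Jeffries and split into the two implications. For the ``if'' direction, first reduce to the case where $R$ is complete: lengths of $R/\mf a_e$ for $\mf m$-primary ideals do not change under $\mf m$-adic completion. Also, $\mf a_\bullet R^\wedge$ is again a $p$-family, since expansion preserves $p$-families. Next, reduce to the reduced case. If $N$ is the nilradical of $R$ with $\dim N<d$, then $N^{[p^e]}=0$ for $e\gg0$ and $\ell(N/(N\cap \mf a_e))$ is $o(p^{ed})$. The bound on this term would come from comparing $N\cap\mf a_e \supseteq \mf m^{c p^e} N$ for some fixed $c$; such a $c$ exists because $\mf a_e\supseteq \mf a_1^{[p^{e-1}]}$ and $\mf a_1\supseteq \mf m^{c}$. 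Then the Hilbert--Samuel function of $N$ has degree $<d$, so the nilradical contributes nothing to the limit. Then pass from $R_{\mathrm{red}}$ to its $d$-dimensional minimal primes $R/\mf p_i$. Using a filtration of $R_{\mathrm{red}}$ by cyclic modules and the same degree-counting argument, $\ell(R/\mf a_e)$ is, up to $o(p^{ed})$, a sum over these components. This reduces the problem to a complete local domain.

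For a complete local domain, I would choose a rank-$d$ valuation $\nu$ with $\N^d$-valued image in the style of an Okounkov/Kaveh--Khovanskii construction, with one-dimensional graded pieces. I would also need a bound of the form $\nu(f)\geq \ell\,\mbbm 1$ forcing $f\in\mf m^{\lceil \ell/c\rceil}$. With such a valuation, $\ell(R/\mf a_e)$ equals (up to residue-field degree) the number of points in $\nu(R)\setminus \nu(\mf a_e)$. The $p$-family condition translates into $p\cdot\nu(\mf a_e)\subseteq \nu(\mf a_{e+1})$, so $\{\nu(\mf a_e)\}$ forms a ``$p$-system'' of semigroup ideals. The core combinatorial step is to show that $p^{-ed}\#(\nu(R)\setminus\nu(\mf a_e))$ converges to the Euclidean volume of the region between the cone of $\nu(R)$ and the associated $p$-body $\bigcup_f\bigcap_{e\ge f} p^{-e}\nu(\mf a_e)$. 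I expect this to be the main obstacle. It requires uniform lattice-point counting in truncated cones, which is where the boundedness $\mf a_e\supseteq \mf m^{cp^e}$ is used to confine everything to a compact region. It also requires arguing that points near the boundary of the $p$-body contribute $o(p^{ed})$. I would handle this by approximating from inside and outside with finitely many ``stabilized'' levels $p^{-f}\nu(\mf a_f)+\text{cone}$ and squeezing $\liminf$ and $\limsup$.

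For the ``only if'' direction, suppose the nilradical $N$ of $R^\wedge$ has dimension $d$. Then some $d$-dimensional minimal prime $\mf p$ supports nilpotents. The plan is to build a $p$-family whose colengths oscillate. Since $N^{[p^e]}$ eventually vanishes while $N$ contributes to length at order $p^{ed}$, we can alternate between families that do and do not contain large pieces of $N$. For instance, take $\mf a_e=\mf m^{[p^e]}$ for $e$ in a sparse set of indices, and $\mf a_e=\mf m^{[p^e]}+\mf m^{c_e}N$ on others, with $c_e$ chosen so that the $p$-family condition survives, using $N^{[p]}\subseteq N^2$-type containments. The two subsequences of normalized lengths should then differ by a positive multiple of the multiplicity of $N$, so the limit fails to exist.
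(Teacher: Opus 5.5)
The paper gives no proof of this statement. It is quoted from Hern\'andez--Jeffries \cite[Thm.~1.2]{Hernandez+Jeffries.18}, so there is no in-paper argument to compare with. Your ``if'' direction is in line with the Okounkov-body approach of that source: complete, discard the nilradical using $\mf a_e\supseteq \mf m^{cp^e}$, pass to top-dimensional components, then prove a $p$-body volume formula for a domain with a good valuation. You correctly locate the real work in the lattice-point/volume comparison. The genuine gap is in the converse.

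Your two-state family cannot produce oscillation in general. Passing from $\mf a_e=\mf m^{[p^e]}+\mf m^{c_e}N$ to a sparse index with $\mf a_{e+1}=\mf m^{[p^{e+1}]}$ requires $(\mf m^{c_e})^{[p]}N^{[p]}\subseteq \mf m^{[p^{e+1}]}$. When $N^{[p]}\neq 0$ this forces $c_e$ to be so large that $\mf m^{c_e}N$ becomes negligible.

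Concretely, take $R=k[[x,y]]/(y^{p^2})$, so $d=1$, $N=(y)$, $N^{[p]}=(y^p)\neq 0$, and $\mf m^{[p^e]}=(x^{p^e})$ for $e\geq 2$.
\begin{enumerate}
\item The requirement $x^{pc_e}y^p\in (x^{p^{e+1}})$ forces $c_e\geq p^e$.
\item Running the same computation backwards along consecutive non-sparse indices, where one needs $x^{pc_e}y^p\in (x^{p^{e+1}})+\mf m^{c_{e+1}}N$, forces $c_e\geq p^e$ for every $e\geq 1$.
\item Then $(\mf m^{[p^e]}+\mf m^{c_e}N)/\mf m^{[p^e]}$ has length bounded independently of $e$, so $\ell(R/\mf a_e)/p^e\to e_{\mathrm{HK}}(R)$ along all $e$.
\end{enumerate}
The containment $N^{[p]}\subseteq N^2$ does not rescue this, because the sparse-index ideals contain nothing from $N$ beyond $\mf m^{[p^{e+1}]}$.

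The fix is to let the nilpotent part die out along Frobenius before returning to $\mf m^{[p^e]}$. In $\hat R$, choose $e_0$ with $N^{[p^{e_0}]}=0$ and indices $e_1<e_2<\cdots$ with $e_{i+1}>e_i+e_0$. Set $\mf a_e=\mf m^{[p^e]}\hat R+N^{[p^{e-e_i}]}\hat R$ for $e_i\leq e<e_i+e_0$, and $\mf a_e=\mf m^{[p^e]}\hat R$ otherwise. This is a $p$-family since $(N^{[p^j]})^{[p]}=N^{[p^{j+1}]}$.
\begin{itemize}
\item Along $e=e_i$ the normalized colengths tend to $e_{\mathrm{HK}}(\hat R_{\mathrm{red}})$.
\item Along $e=e_i+e_0$ they tend to $e_{\mathrm{HK}}(\hat R)$.
\item The difference is $\lim_q \ell\bigl(N/(N\cap \mf m^{[q]})\bigr)/q^d$.
\end{itemize}
Positivity of this limit is not automatic. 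The containment $\mf m^{[q]}N\subseteq N\cap\mf m^{[q]}$ only bounds it from above. You need Artin--Rees, $N\cap\mf m^{[q]}\subseteq N\cap \mf m^{q}\subseteq \mf m^{q-k}N$, together with $\dim N=d$, which gives the lower bound $e(\mf m,N)/d!>0$. Finally, $R$ itself may be reduced, so you must contract: $\mf a_e\cap R$ is a $p$-family of $\mf m$-primary ideals of $R$ with the same colengths.

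Two smaller repairs are needed in the ``if'' direction.

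\emph{Reduced to domain.} Passing from $A=R_{\mathrm{red}}$ to its minimal primes is not ``the same degree-counting argument.'' A prime filtration only gives $\ell(A/\mf a_e)\le\sum_i \ell(A/(\mf q_i+\mf a_e))$, because $-\otimes A/\mf a_e$ is not left exact and Artin--Rees is unavailable for an arbitrary $p$-family. Instead, embed $A$ in $S=\prod_i A/\mf p_i$ and pick $g$ outside every minimal prime with $gS\subseteq A$. Then $g$ kills $(\mf a_eS\cap A)/\mf a_e$, so $\bigl|\ell(A/\mf a_e)-\ell(S/\mf a_eS)\bigr|\le \ell(A/(\mf a_e+gA))+\ell\bigl((S/A)/\mf a_e(S/A)\bigr)=o(p^{ed})$.

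\emph{One-dimensional leaves.} Valuations with one-dimensional leaves need not exist. For $\bb F_p[[x,y]]/(x^2-ay^2)$ with $a$ a non-square, the normalization is $\bb F_{p^2}[[y]]$, and every $\Z$-valued valuation centered on $\mf m$ has two-dimensional leaves in all positive degrees. Your parenthetical ``up to residue-field degree'' is then only asymptotically true, and you must prove it. One way is to multiply by an element of $R$ whose leaf is the whole residue field of $\nu$. This fills every nonzero leaf of $\mf a_e$ after a fixed translation, at a cost of $O(p^{e(d-1)})$ values.
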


Even for the case of $F$-graded systems, leveraging pre-existing work on the Hilbert-Kunz multiplicity and $F$-signature can show cases when this volume exists:

\begin{prop}
Let $(R, \mf m, k)$ be an $F$-finite local domain of dimension $d$, and let $\mf a_\bullet$ be an $F$-graded sequence of ideals such that $\mf m^{[p^e]}\subseteq \mf a_e$ for all $e$ and such that $\mf a_1\neq 0$. Then $\cmult(\mf a_\bullet)$ exists.
\end{prop}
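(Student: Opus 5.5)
The plan is to compare $\mf a_\bullet$ with its $p$-stabilization $\pstab{\mf a}_\bullet$. That object is a $p$-family by \Cref{p-stab-is-p-family}, so the theorem of Hern\'andez--Jeffries applies to it. The hypothesis $\mf m^{[p^e]}\subseteq \mf a_e$ transfers to it: for $r\in \mf m$ and every $f$, $(r^{p^e})^{p^f}\in \mf m^{[p^{e+f}]}\subseteq \mf a_{e+f}$, so $\mf m^{[p^e]}\subseteq \pstab{\mf a}_e$. Hence $\pstab{\mf a}_\bullet$ is a $p$-family of $\mf m$-primary ideals. Since $R$ is an $F$-finite domain, it is excellent, so $\widehat R$ is reduced and its nilradical is $0$, of dimension $<d$. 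By \cite[Thm.~1.2]{Hernandez+Jeffries.18}, $\cmult(\pstab{\mf a}_\bullet)$ exists. It then suffices to show
\[
\ell(R/\mf a_e) - \ell(R/\pstab{\mf a}_e) = o(p^{ed}),
\]
which gives $\cmult(\mf a_\bullet)=\cmult(\pstab{\mf a}_\bullet)$.

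\textbf{Easier inequality.} Fix $0\neq c\in \mf a_1$. By \Cref{ag-ae-subset-be}, $c\,\mf a_e\subseteq \pstab{\mf a}_e$. Also $\mf a_e\subseteq \pstab{\mf a}_e + (c)$ trivially, and $\mf m^{[p^e]}\subseteq \mf a_e$. Together these give
\[
\ell(R/\pstab{\mf a}_e)\le \ell(R/\mf a_e) + \ell\bigl(\mf a_e/(\mf a_e\cap \pstab{\mf a}_e)\bigr),
\]
and the last module is a quotient of $\mf a_e/c\,\mf a_e$. I would bound $\ell(\mf a_e/c\mf a_e)$ by passing through Frobenius. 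Since $R$ is $F$-finite, $F^e_*R$ is a finite $R$-module of generic rank $\approx p^{e(d+\alpha)}$, with torsion-free error controlled uniformly (Tucker/Polstra-type estimates). Also $R/(c)$ has dimension $d-1$, so $\ell(R/(c)+\mf m^{[p^e]})=O(p^{e(d-1)})$. I expect to get $\ell(\mf a_e/c\mf a_e)\le \ell(R/((c)+\mf m^{[p^e]}))\cdot C$, up to $O(p^{e(d-1)})$, using that $\mf a_e$ is sandwiched between $\mf m^{[p^e]}$ and $R$ and that multiplication by $c$ is injective in the domain $R$. This yields $\ell(R/\pstab{\mf a}_e)\le \ell(R/\mf a_e)+O(p^{e(d-1)})$.

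\textbf{Harder inequality.} For the reverse bound, use that $\pstab{\mf a}_e^{[p^f]}\subseteq \mf a_{e+f}$ for $f\gg0$, though only "eventually, depending on the element". Since $\pstab{\mf a}_e$ is finitely generated, a single $f_e$ works for all its generators. Then
\[
\ell(R/\mf a_{e+f_e})\le \ell\bigl(R/\pstab{\mf a}_e^{[p^{f_e}]}\bigr)\approx p^{f_e d}\,\ell(R/\pstab{\mf a}_e),
\]
with error $O(p^{(e+f_e)(d-1)})$ uniform in the ideal; this uniform estimate for $\mf m$-primary ideals containing $\mf m^{[p^e]}$ is again the Tucker/Polstra bound. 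Dividing by $p^{(e+f_e)d}$ gives
\[
\liminf_e \frac{\ell(R/\mf a_e)}{p^{ed}} \le \cmult(\pstab{\mf a}_\bullet)
\]
along a subsequence. To get the full $\limsup$, I would also use the $F$-graded inequality $\mf a_{e'}^{[p^{g}]}\mf a_{g}\subseteq \mf a_{e'+g}$ together with $c^{(p^g-1)/(p-1)}\in \mf a_g$. This transports the bound from degree $e+f_e$ to all larger degrees, at the cost of colon-by-$c$-power terms, which are again controlled by the domain hypothesis and the $O(p^{e(d-1)})$ estimate above.

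\textbf{Main obstacle.} I expect the main difficulty to be this last step: making the non-uniform index $f_e$ and the errors uniform enough to control the $\limsup$ of $\ell(R/\mf a_e)/p^{ed}$. The first thing I would try is the uniform Hilbert--Kunz-type inequality $|\ell(R/I^{[q]})-q^d\ell(R/I)|\le C\,\ell(R/I)\,q^{d-1}$, valid for all $\mf m$-primary $I$ and all $q$, with $C$ independent of $I$ and $q$.
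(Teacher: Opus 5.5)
There is a genuine gap: the uniform estimate you plan to lean on, $|\ell(R/I^{[q]})-q^d\ell(R/I)|\le C\,\ell(R/I)\,q^{d-1}$, is false unless $R$ is regular. At $I=\mf m$ it says $\ell(R/\mf m^{[q]})=q^d+O(q^{d-1})$, i.e.\ $e_{\mathrm{HK}}(R)=1$. By Watanabe--Yoshida that forces $R$ to be regular. Your claimed error $O(p^{(e+f_e)(d-1)})$ in the harder inequality is the same false statement; take $I=\mf m\supseteq \mf m^{[p]}$ and let $f\to\infty$. The uniform estimate that does hold compares $\ell(R/I^{[q]})$ with $q^d e_{\mathrm{HK}}(I)$. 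Using it would leave you needing $e_{\mathrm{HK}}(\pstab{\mf a}_e)/p^{ed}\to\cmult(\pstab{\mf a}_\bullet)$, which is the entire difficulty.

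The missing idea is a one-step bound whose error is controlled by the containment $\mf m^{[p^e]}\subseteq I$ rather than by $\ell(R/I)$. Since $R$ is a domain, there is an injection $R^{\oplus p^{d+\alpha}}\hookrightarrow F_*R$ (with $p^\alpha=[k:k^p]$) whose cokernel $M$ has dimension $<d$. Tensoring with $R/I$ gives
\[
\ell(R/I^{[p]})\le p^d\,\ell(R/I)+p^{-\alpha}\ell(M/IM)\le p^d\,\ell(R/I)+C\,p^{e(d-1)}\qquad\text{whenever } \mf m^{[p^e]}\subseteq I.
\]
Iterating, $\ell(R/I^{[p^f]})/p^{(e+f)d}\le \ell(R/I)/p^{ed}+C'/p^e$ for all $f$.

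There is also a slip in your easier inequality: $\ell(\mf a_e/c\,\mf a_e)$ is infinite when $d\ge 2$, since it localizes to $R_{\mf p}/cR_{\mf p}\neq 0$ at primes $\mf p\neq\mf m$ containing $c$. You must work in the finite-length module $N=\mf a_e/\mf m^{[p^e]}$. There $\ell(N/cN)=\ell(0:_N c)\le \ell(R/(cR+\mf m^{[p^e]}))=O(p^{e(d-1)})$, which is the bound you wanted.

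With the correct bound, your ``main obstacle'' disappears. The containment $\pstab{\mf a}_e^{[p^f]}\subseteq \mf a_{e+f}$ holds for \emph{all} $f\ge f_e$, so $\limsup_n \ell(R/\mf a_n)/p^{nd}\le \ell(R/\pstab{\mf a}_e)/p^{ed}+C'/p^e$ for every $e$. Letting $e\to\infty$ gives the reverse inequality, with no subsequence or transport argument needed. Repaired this way, your route proves more than asked: $\cmult(\mf a_\bullet)=\cmult(\pstab{\mf a}_\bullet)$ for non-monomial systems, which the paper only obtains in the monomial case via Das--Meng.

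For existence alone, the detour through $\pstab{\mf a}_\bullet$ and Hern\'andez--Jeffries is unnecessary, and the paper skips it. It applies \cite[Thm.~4.3]{Polstra+Tucker.18}: any $I_\bullet$ with $\mf m^{[p^e]}\subseteq I_e$ and $c\,I_e^{[p]}\subseteq I_{e+1}$ for a fixed $c\neq 0$ has a limit. Any nonzero $c\in\mf a_1$ works, since $c\,\mf a_e^{[p]}\subseteq \mf a_e^{[p]}\mf a_1\subseteq \mf a_{e+1}$. Concretely, the same one-step bound applied directly to $\mf a_\bullet$ gives $\ell(R/\mf a_{e+1})\le \ell(R/\mf a_e^{[p]})+\ell(R/(cR+\mf m^{[p^{e+1}]}))\le p^d\ell(R/\mf a_e)+O(p^{e(d-1)})$. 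So $a_e=\ell(R/\mf a_e)/p^{ed}$ satisfies $a_{e+1}\le a_e+C/p^e$ and therefore converges.
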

\begin{proof}
The key component of this proof is \cite[Thm.~4.3]{Polstra+Tucker.18}, which shows that in fact for \emph{any} sequence of ideals $I_\bullet$ and constant $0\neq c \in R$ with $\mf m^{[p^e]}\subseteq I_e$ and $cI_e^{[p]}\subseteq I_{e+1}$ for all $e\in \N$, the limit $\lim_{e\to \infty} \frac{\ell_R(R/I_e)}{p^e}$ exists. In our case, the additional requirement that $\mf a_\bullet$ be $F$-graded and $\mf a_1\neq 0$ means we can simply take $c$ to be any non-zero element of $\mf a_1$, since then
\[
\mf a_e^{[p]}c \subseteq \mf a_e^{[p]}\mf a_1\subseteq \mf a_{e+1}
\]
as desired.
\end{proof}

Beyond existence, of particular interest to us is how this invariant relates to the associated $p$-body, and more generally how it relates the $p$-stabilization.
In \cite[Conj.~IV.5.4]{Brosowsky.24}, we conjectured that for monomial $F$-graded systems in a polynomial ring, the volume of the system is the same as the volume of the complement of the associated $p$-body. Since then, Das and Meng have proven this conjecture, and in fact have done so in a more general setting! See \cite{Das+Meng.26} for their full setup involving OK domains and $F$-graded systems which are bounded below linearly. In the setting of this paper, their result simplifies to the following:

\begin{thm}[{\cite[Thm.~7.15]{Das+Meng.26}}]
Let $S=k[x_1,\ldots, x_d]$ for an $F$-finite field $k$ and let $\mf a_\bullet$ be an $F$-graded system. If $\mf a_1$ is an $\mf m$-primary ideal, then
\[
\vol(\mf a_\bullet) = \vol(\R^d_{\geq 0}\setminus \Delta(\mf a_\bullet)).
\]
\end{thm}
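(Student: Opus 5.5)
The approach is to translate lengths into lattice-point counts and sandwich the rescaled exponent sets $\frac{1}{p^e}\log \mf a_e$ between two perturbations of $\Delta:=\Delta(\mf a_\bullet)$, using the $p$-stabilization as the bridge. Throughout, $\mf a_\bullet$ is a monomial $F$-graded system, which is needed for $\Delta(\mf a_\bullet)$ to be defined. Since each $\mf a_e$ is monomial,
\[
\frac{\ell(S/\mf a_e)}{p^{ed}} = \frac{1}{p^{ed}}\#\left(\N^d\setminus \log\mf a_e\right),
\]
which is the Riemann sum over the grid $\frac{1}{p^e}\N^d$ of the indicator of $\R^d_{\geq 0}\setminus \frac1{p^e}\log \mf a_e$. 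Let $\Delta_\R=\Delta+\R^d_{\geq 0}$. By \Cref{associated-p-body-invt} this set is upward closed. An upward-closed subset of $\R^d_{\ge0}$ has Lebesgue-null boundary, so Riemann sums of the indicator of its complement converge, at least on a bounded box.

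\textbf{Boundedness.} Since $\mf a_1$ is $\mf m$-primary, $\mf m^N\subseteq \mf a_1$ for some $N$. Then \Cref{minl-sys-containment} gives $\mf m^{N(p^e-1)/(p-1)}\subseteq \mf a_e$. Hence everything relevant lies in the box $[0,M]^d$ with $M=N/(p-1)$, and $\Delta$ contains every dyadic-type point with $|\alpha|\geq M$. All complements below are therefore contained in $[0,M]^d$.

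\textbf{Lower bound on the complement.} \Cref{ag-ae-subset-be} gives $\mf a_1\mf a_e\subseteq \pstab{\mf a}_e$. By \Cref{shape-stabilization-correspondence}, $\pstab{\mf a}_e=\mf a^{\Delta}_e$. Since $x_i^N\in\mf a_1$, this shows that whenever $\beta\in\log\mf a_e$ we have $\frac{1}{p^e}(\beta+N\mbbm 1)\in\Delta$. Consequently
\[
\#\left(\N^d\setminus\log \mf a_e\right)\;\geq\;\#\left\{\beta\in\N^d : \tfrac{1}{p^e}\beta+\tfrac{N}{p^e}\mbbm 1\notin\Delta_\R\right\}.
\]
After dividing by $p^{ed}$, the right side is a Riemann sum for the complement of $\Delta_\R$ translated by a vanishing amount. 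By Jordan measurability it tends to $\vol(\R^d_{\ge0}\setminus\Delta_\R)$, which equals $\vol(\R^d_{\ge0}\setminus\Delta)$ in the sense of the remark after \Cref{associated-p-body-invt}. So the $\liminf$ of $\ell(S/\mf a_e)/p^{ed}$ is at least the target.

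\textbf{Upper bound on the complement.} This is the step I expect to be the main obstacle, because membership in $\Delta$ only holds for $e\gg0$ depending on the point. I would handle it by compactness. Fix $\varepsilon>0$ and $E$ with $p^{-E}<\varepsilon$, and take the finite set $G=\Delta\cap p^{-E}\N^d\cap[0,M+1]^d$. For each $\alpha\in G$ there is an $e_\alpha$ with $p^e\alpha\in\log\mf a_e$ for all $e\geq e_\alpha$; let $e_0=\max_\alpha e_\alpha$. For $e\geq e_0$, every grid point $\beta/p^e$ dominating some $\alpha\in G$ then has $x^\beta\in\mf a_e$. Hence the complement of $\frac1{p^e}\log\mf a_e$ lies in the complement of $\Delta_\varepsilon:=G+\R^d_{\ge0}$. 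The set $\Delta_\varepsilon$ contains $(\Delta_\R+\varepsilon\mbbm 1)\cap[0,M]^d$, because rounding a point of $\Delta_\R$ up to the $p^{-E}$-grid stays in $\Delta$. So the $\limsup$ is at most $\vol([0,M]^d\setminus(\Delta_\R+\varepsilon\mbbm1))$. As $\varepsilon\to0$ this tends to $\vol(\R^d_{\ge0}\setminus\Delta_\R)$, again by the null-boundary property.

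Combining the two bounds shows that the limit exists and equals $\vol(\R^d_{\geq0}\setminus\Delta(\mf a_\bullet))$. The care needed is in justifying the null-boundary and translation-continuity facts for upward-closed sets; I would prove these by slicing along the $\mbbm 1$ direction, since each line parallel to $\mbbm1$ meets the boundary in at most one point.
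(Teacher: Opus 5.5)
Your argument is correct in substance, but it takes a genuinely different route from the paper, which gives no proof of its own. The paper imports the statement from Das and Meng, where it is proved for OK domains and $F$-graded systems bounded below linearly. It then only checks, in the following remark, that $\mf a_1$ being $\mf m$-primary forces the BBL hypothesis in $S$.

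Your boundedness step is the counterpart of that remark. Your two-sided sandwich replaces the cited theorem with an elementary lattice-point count:
\begin{itemize}
\item The colength lower bound comes from the paper's own \Cref{ag-ae-subset-be} together with \Cref{shape-stabilization-correspondence}, which give $\frac{1}{p^e}(\beta+N\mbbm 1)\in\Delta$ for $\beta\in\log\mf a_e$.
\item The colength upper bound comes from approximating $\Delta$ by the finite grid set $G$, so only finitely many thresholds $e_\alpha$ must be controlled.
\end{itemize}

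The auxiliary facts you defer are fine. If $x+t\mbbm 1$ lies in the closure of an upward-closed set, then $x+s\mbbm 1$ is an interior point for every $s>t$. So each line parallel to $\mbbm 1$ meets the boundary at most once. The varying shift $N/p^e$ is handled by monotonicity of the sets in the shift.

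What your route buys is a short self-contained proof that also establishes existence of the limit and makes the subsequent corollary on $p$-stabilization transparent. Within $S$ it even covers non-monomial systems once $\Delta$ is read through $\ini\mf a_\bullet$, which is $F$-graded by \Cref{init-ideal-is-F-graded} and has the same colengths. What the cited route buys is validity beyond polynomial rings. Your restriction to monomial systems is consistent with the paper, since $\Delta(\mf a_\bullet)$ is only defined there in that case.

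One slip needs repair. The containment $\mf m^{N(p^e-1)/(p-1)}\subseteq\mf a_e$ does not follow from \Cref{minl-sys-containment} when $d\geq 2$, because $(\mf m^N)^{[q]}\not\supseteq\mf m^{Nq}$. This fails already for the minimal system with $\mf a_1=\mf m$, $d=2$ and $p\geq 3$: there $x_1^{p-1}x_2^2\in\mf m^{p+1}$, but $x_1^{p-1}x_2^2\notin\mf m\cdot\mf m^{[p]}=\mf a_2$.

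Use pigeonhole instead: $\mf a_1^{[q]}\supseteq\langle x_1^{Nq},\ldots,x_d^{Nq}\rangle\supseteq\mf m^{d(Nq-1)+1}$. Hence $\mf m^{dN(p^e-1)/(p-1)}\subseteq\prod_{i<e}\mf a_1^{[p^i]}\subseteq\mf a_e$. The rest of your argument then goes through unchanged with $M=dN/(p-1)$.
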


\begin{rmk}
The original statement of their result references the bounded below linearly (BBL) property, meaning that there exists a $c$ such that for all $e$, we have $\mf m^{cp^e} \subset \mf a_e$. In the polynomial ring setting, if $\mf a_\bullet$ is $F$-graded and $\mf a_1$ is $\mf m$-primary then $\mf a_\bullet$ is automatically BBL. 
We can see this as follows: let $n$ be such that $\mf m^n\subset \mf a_1$. Then
\[
\mf a_e \supset \prod_{i=0}^{e-1} \mf a_1 ^{[p^i]} \supset \prod_{i=0}^{e-1} (\mf m^n)^{[p^i]} \supset \prod_{i=0}^{e-1} \mf m^{\binom{n+d-1}{n}(p^i-1)+1}
= \mf m^{\sum_{i=0}^{e-1}(\binom{n+d-1}{n}(p^i-1)+1)}.
\]
This follows by first using \Cref{minl-sys-containment}, and then using the pigeonhole principle to show that for an ideal $I$ with $g$ generators, $I^{g(q-1)+1}\subset I^{[q]}$. Now the power on $\mf m$ simplifies to $e+\binom{n+d-1}{n}\left(\frac{p^e-1}{p-1} - e\right)$, which limits to $\frac{1}{p-1}\binom{n+d-1}{n}$ if we scale by $1/p^e$ and take $e\to \infty$. In particular, there exists some fixed $N$ such that taking $c = N + \frac{1}{p-1}\binom{n+d-1}{n}$ will work for our BBL bound, and which explains our omission of the BBL hypothesis in our restatement of Das and Meng's result for polynomial rings.
\end{rmk}

Finally, given the $p$-body/$p$-stabilization correspondence (\Cref{shape-stabilization-correspondence}), the above result immediately gives the following corollary:
\begin{cor}
For an $\mf m$-primary monomial $F$-graded system $\mf a_\bullet$ in an $F$-finite polynomial ring, $p$-stabilization preserves multiplicity, i.e.,
\(
\cmult(\mf a_\bullet) = \cmult(\pstab{\mf a}_\bullet).
\)
\end{cor}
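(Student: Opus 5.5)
The plan is to deduce the corollary directly from the Das--Meng theorem together with \Cref{shape-stabilization-correspondence}. The key observation is that the associated $p$-body of $\mf a_\bullet$ and of $\pstab{\mf a}_\bullet$ coincide. By \Cref{shape-stabilization-correspondence}, $\pstab{\mf a}_\bullet = \mf a_\bullet^{\Delta(\mf a_\bullet)}$. Applying the second part of that theorem with $\Delta = \Delta(\mf a_\bullet)$ gives
\[
\Delta(\pstab{\mf a}_\bullet) = \Delta(\mf a_\bullet) + \left(\N[1/p]\right)^d .
\]
By \Cref{associated-p-body-invt}, the right-hand side is just $\Delta(\mf a_\bullet)$.

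Next I will verify that $\pstab{\mf a}_\bullet$ satisfies the hypotheses of the Das--Meng theorem.
\begin{itemize}
\item It is a monomial $F$-graded system: it is a $p$-family by \Cref{p-stab-is-p-family}, and it is monomial by the lemma on graded systems.
\item Its degree-one piece $\pstab{\mf a}_1$ is $\mf m$-primary.
\end{itemize}
For the second point, I would use \Cref{ag-ae-subset-be} with $e=1$, which gives $\mf a_1^2 \subseteq \pstab{\mf a}_1$. Since $\mf a_1$ is $\mf m$-primary, so is $\mf a_1^2$, and hence so is the larger ideal $\pstab{\mf a}_1$. Here I interpret ``$\mf m$-primary system'' as meaning $\mf a_1$ is $\mf m$-primary, which is the hypothesis of the Das--Meng theorem. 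If instead only eventual $\mf m$-primariness is assumed, one first needs $\mf a_1$ to be $\mf m$-primary; this hypothesis issue is the only real subtlety, and I would state the corollary under the assumption that $\mf a_1$ is $\mf m$-primary.

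Then applying the Das--Meng theorem to both systems gives
\[
\cmult(\mf a_\bullet) = \vol\bigl(\R^d_{\geq 0}\setminus \Delta(\mf a_\bullet)\bigr) = \vol\bigl(\R^d_{\geq 0}\setminus \Delta(\pstab{\mf a}_\bullet)\bigr) = \cmult(\pstab{\mf a}_\bullet).
\]
Both limits exist by that theorem. The main obstacle is therefore only the bookkeeping of hypotheses; the equality of bodies is formal.
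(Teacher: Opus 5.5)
Your proposal is correct and matches the paper's argument: the paper deduces the corollary immediately from the Das--Meng theorem together with \Cref{shape-stabilization-correspondence}, which (combined with \Cref{associated-p-body-invt}) gives $\Delta(\pstab{\mf a}_\bullet) = \Delta(\mf a_\bullet)$. Your explicit check that $\pstab{\mf a}_\bullet$ satisfies the Das--Meng hypotheses, using $\mf a_1^2 \subseteq \pstab{\mf a}_1$ from \Cref{ag-ae-subset-be}, fills in a detail the paper leaves implicit.
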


\printbibliography

\end{document}